\newcommand{\R}{\mathbbm R}
\newcommand{\N}{\mathbbm N}
\newcommand{\Htwo}{H^2(M;g_0)}
\newcommand{\Hfour}{H^4(M;g_0)}
\newcommand{\Div}{\mbox{div}}
\newtheorem{theorem}{Theorem}
\newtheorem{proposition}[theorem]{Proposition}
\newtheorem{lemma}[theorem]{Lemma}
\theoremstyle{definition}
\theoremstyle{remark}
\newtheorem{remark}[theorem]{Remark}
\numberwithin{equation}{section}
\numberwithin{theorem}{section}
\DeclareMathOperator{\vol}{Vol}
\newcommand{\coloneqq}{\,\raise0.08ex\hbox{\textnormal{:}}\!\!=}
\def\XXint#1#2#3{{\setbox0=\hbox{$#1{#2#3}{\int}$}
     \vcenter{\hbox{$#2#3$}}\kern-.5\wd0}}
\begin{document}

\title[``Large'' conformal metrics of prescribed $Q$-curvature]
{``Large'' conformal metrics of prescribed $Q$-curvature in the negative case}

\author{Luca Galimberti}
\address[Luca Galimberti]{Departement Mathematik\\ETH-Z\"urich\\CH-8092 Z\"urich}
\email{luca.galimberti@math.ethz.ch}
\thanks{Supported by SNF grant 200021\_140467 / 1. }
\date{\today}

\begin{abstract} 
Given a compact and connected four dimensional smooth Riemannian manifold $(M,g_0)$ with $k_P := \int_M  Q_{g_0} dV_{g_0}  <0$ and a smooth non-constant function $f_0$ with $\max_{p\in M}f_0(p)=0$, all of whose maximum points are non-degenerate, we assume that the Paneitz operator is nonnegative and with kernel consisting of constants. Then, we are able to prove that for sufficiently small $\lambda>0$ there are at least two distinct conformal metrics $g_\lambda=e^{2u_\lambda}g_0$ and $g^\lambda=e^{2u^\lambda}g_0$ of $Q$-curvature $Q_{g_\lambda}=Q_{g^\lambda}=f_0+\lambda$. Moreover, by means of the ``monotonicity trick'' in a way similar to \cite{Borer-Galimberti-Struwe}, we obtain crucial estimates for the ``large'' solutions $u^\lambda$ which enable us to study their ``bubbling behavior'' as $\lambda \downarrow 0$.
\end{abstract} 

\maketitle

\section{Introduction} 

Given a smooth Riemannian manifold $(M,g_0)$ and a function $f:M\to\R$, an important problem in conformal geometry is to find conditions on $f$ in order that it arises as a certain kind of curvature of
a metric $g$ conformal to $g_0$. In dimension 2, one usually considers the Gauss curvature and is led to the
classical problem of prescribing the Gauss curvature. We refer the reader to the classical references
\cite{Berger71},\cite{Kazdan-Warner74} and \cite{Borer-Galimberti-Struwe} for a recent review
of the state of the art for this problem.

In dimension 4, a natural curvature to be considered is the $Q$-curvature, introduced in \cite{Branson85}
and associated to the Paneitz operator, a conformally invariant operator which first appeared in 
\cite{Paneitz82}. More precisely, let $(M,g_0)$ be a closed and connected 4-dimensional Riemannian manifold endowed with a smooth background metric $g_0$. The $Q$-curvature $Q_{g_0}$ and the Paneitz operator $P_{g_0}$
are defined in terms of the Ricci tensor $\mbox{Ric}_{g_0}$ and the scalar curvature $R_{g_0}$ of $(M,g_0)$ as
\begin{equation}\label{eqn: definizione Q-curvature}
Q_{g_0}=-\frac{1}{12}(\Delta_{g_0}R_{g_0}-R_{g_0}^2+3|\mbox{Ric}_{g_0}|^2),  
\end{equation}
\begin{equation}\label{eqn: definizione Paneitz}
P_{g_0}(\varphi) =\Delta^2_{g_0}\varphi - \Div_{g_0}\left(\frac{2}{3} R_{g_0}g_0 -2\mbox{Ric}_{g_0} \right)d\varphi  
\end{equation}
where $\Delta_{g_0}$ is the Laplace Beltrami of $(M,g_0)$ and $\varphi$ is any smooth function on $M$. The relation between $P_{g_0}$ and $Q_{g_0}$, when one performs
a conformal change of metric $g=e^{2u}g_0$, is given by
\begin{equation}\label{eqn: cambio conforme di metrica}
P_{g}=e^{-4u}P_{g_0}; \;\;\;\;\;\;\;\; P_{g_0}u + 2Q_{g_0} = 2Q_{g}e^{4u},
\end{equation}
which may be viewed as the analogue of the transformation rule for Gauss curvature in dimension 2. Moreover,
one has the following extension of the Gauss-Bonnet formula (compare \cite{Branson05})
\begin{equation}\label{eqn: Gauss Bonnet}
\int_M \left( Q_{g_0}+\frac{|W_{g_0}|^2}{8}\right) dV_{g_0} = 4\pi^2\chi(M),  
\end{equation}
where $\chi(M)$ is the Euler characteristic of $M$ and $W_{g_0}$ denotes the Weyl tensor of $(M,g_0)$. From
the pointwise conformal invariance of $|W_{g_0}|^2 dV_{g_0}$, it readily follows that also the quantity
\begin{equation}\label{eqn: k_P}
k_P := \int_M  Q_{g_0} dV_{g_0}  
\end{equation}
is a conformal invariant.

Hence, our initial problem can be stated as follows: given a function $f:M\to\R$, we look for conditions on $f$ such that the equation
\begin{equation}\label{eqn: cambio conforme di metrica (2)}
P_{g_0}u + 2Q_{g_0} = 2fe^{4u}  
\end{equation}
admits a solution. In view of the conformal invariance of (\ref{eqn: k_P}), we immediately deduce a first set of necessary conditions on $f$ for the solvability of (\ref{eqn: cambio conforme di metrica (2)}), depending on the sign of $k_P$. More precisely, if $k_P>0$, then $f$ must be positive somewhere; if $k_P<0$, then
$f$ must be negative somewhere; if $k_p=0$, then $f$ must change sign or must be identically zero.

In the case of the standard sphere $S^4$, Wei and Xu \cite{Wei-Xu98} showed that (\ref{eqn: cambio conforme di metrica (2)}) admits a solution when the prescribed function $f$ is positive, and under some conditions involving the critical points of $f$ and the topological degree of a certain map defined in terms of $f$. Later, Brendle 
\cite{Brendle03} was able to construct conformal metrics whose $Q$-curvature is a constant multiple of a prescribed positive function on a general $M$, under the assumptions that the Paneitz operator is nonnegative with kernel 
consisting of only constant functions and $k_P<8\pi^2$. As a consequence, he was able to generalize Moser's theorem
for prescribed Gauss curvature on the projective plane to dimension $n$. In \cite{Baird-Fardoun-Regbaoui}, Baird, Fardoun and Regbaoui,
constructing a suitable gradient flow, were able to give new sufficient conditions on $f$, depending on the
sign of $k_P$, in order that (\ref{eqn: cambio conforme di metrica (2)}) admits a solution, as soon as one assumes the nonnegativity of the Paneitz operator and that its kernel consists of constant functions only. In particular, in the
negative case, they could prove existence of solutions to (\ref{eqn: cambio conforme di metrica (2)}) for
functions $f$ changing sign and not ``too'' positive.

The afore-mentioned existence results give almost no information about the structure of the set of solutions to equation (\ref{eqn: cambio conforme di metrica (2)}). Goal of this paper is try to shed some light on the
set of solutions and its compactness properties. We will focus on the negative case, viz $k_P<0$, and we will assume that the Paneitz operator is nonnegative with kernel consisting uniquely of constants. Note that Eastwood and Singer \cite{Eastwood-Singer93} constructed metrics on connected sums of $S^3 \times S^1$ with $k_P<0$, $P_{g_0}\geq 0$ and kernel consisting of constants functions. Under these assumptions, the analogue of the uniformization theorem holds 
(we refer the reader for instance to \cite{Chang-Yang95} and \cite{Djadli-Malchiodi08}); thus we can assume
that $M$ carries a background metric $g_0$ such that $Q_{g_0}=\mbox{const}<0$. Finally, by convenience, we normalize the volume of $(M,g_0)$ to unity. Therefore, 
\[
Q_{g_0}=k_P<0\,.
\]

In this setting and in complete analogy to the case of surfaces of higher genus (compare \cite{Aubin70}), solutions of (\ref{eqn: cambio conforme di metrica (2)}) can be characterized as critical points
of the following energy
\begin{equation}\label{eqn: the energy}
E_f(u)= \langle P_{g_0}u,u \rangle +4Q_{g_0}\int_M u\, dV_{g_0} - \int_M fe^{4u}dV_{g_0}, \;\; u\in\Htwo\,,  
\end{equation}
where 
\[
\langle P_{g_0}u,v\rangle = \int_M \left[\Delta_{g_0}u \Delta_{g_0}v +\frac{2}{3}R_{g_0} g_0\left(\nabla_{g_0}u,\nabla_{g_0}v\right) -2\mbox{Ric}_{g_0}\left(\nabla_{g_0}u,\nabla_{g_0}v\right) \right] dV_{g_0},  
\]
with $u,v\in\Htwo$. Note that in view of Adams' inequality \cite{Adams88}, the above energy is well defined on $\Htwo$. We also remark that, if $f\in C^{\infty}(M)$, by standard regularity arguments
(see for instance Thm 7.1 \cite{Agmon59}) it follows that critical points of $E_f$ are of class $C^{\infty}$ and hence are classical solutions of (\ref{eqn: cambio conforme di metrica (2)}).

Our first result is:
\begin{theorem}\label{thm: metodo diretto}
Let $(M,g_0)$ be closed and connected with $k_P<0$, $P_{g_0}\geq 0$ and $\ker(P_{g_0})=\left\{constants\right\}.$ Let $0\neq f\in C^0(M)$ with $f\leq 0$. Then (\ref{eqn: cambio conforme di metrica (2)}) admits a unique solution in $\Hfour$.  
\end{theorem}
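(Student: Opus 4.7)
The plan is to apply the direct method of the calculus of variations to the functional $E_f$ on $\Htwo$. Writing $u=\bar u + v$ with $\bar u=\int_M u\,dV_{g_0}$ and $\int_M v\,dV_{g_0}=0$ (and recalling $\vol(M,g_0)=1$), the energy decomposes as
\[
E_f(u) = \langle P_{g_0}v, v\rangle + 4k_P\bar u + e^{4\bar u}A(v), \qquad A(v)\coloneqq -\int_M f\,e^{4v}\,dV_{g_0},
\]
with $A(v)>0$ since $0\neq f\leq 0$. The standing hypotheses on $P_{g_0}$ yield the spectral gap $\langle P_{g_0}v, v\rangle\geq \lambda\|v\|_{\Htwo}^2$ on the $L^2$-orthogonal complement of the constants, for some $\lambda>0$.

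First I would prove coercivity. For each fixed $v$ the scalar map $\bar u\mapsto 4k_P\bar u + e^{4\bar u}A(v)$ is strictly convex and attains its minimum at $\bar u^\ast=\frac{1}{4}\log(-k_P/A(v))$, with value $|k_P|\log A(v) + C(k_P)$. Jensen's inequality applied to the probability measure $(-f)\,dV_{g_0}/\int_M(-f)\,dV_{g_0}$ yields
\[
A(v)\geq \Bigl(\int_M(-f)\,dV_{g_0}\Bigr)\exp\bigl(-C_1\|v\|_{\Htwo}\bigr),
\]
so $|k_P|\log A(v)\geq -C_2\|v\|_{\Htwo}-C_3$. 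Combining,
\[
E_f(u)\geq \lambda\|v\|_{\Htwo}^2 - C_2\|v\|_{\Htwo} - C_4,
\]
which forces $\|v_n\|_{\Htwo}$ to stay bounded along any minimizing sequence $(u_n)$. A direct one-variable inspection (the scalar map in $\bar u$ tends to $+\infty$ at both ends whenever $A(v_n)$ stays in a compact subset of $(0,\infty)$, as guaranteed by the $\Htwo$-bound on $v_n$ together with Adams' inequality) then also forces $\bar u_n$ to stay bounded.

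Weak compactness in $\Htwo$ therefore produces a candidate minimizer $u$. Weak lower semicontinuity of $E_f$ is clear: the Paneitz quadratic form is convex, $u\mapsto\int_M u\,dV_{g_0}$ is weakly continuous, and $u\mapsto\int_M f e^{4u}\,dV_{g_0}$ is weakly continuous via Adams' inequality combined with the compact embedding $\Htwo\hookrightarrow L^p$ for every $p<\infty$ (yielding $e^{4u_n}\to e^{4u}$ in, say, $L^2$). The minimizer satisfies (\ref{eqn: cambio conforme di metrica (2)}) weakly, and the regularity statement cited above (Thm.~7.1 of \cite{Agmon59}) places it in $\Hfour$. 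For uniqueness, given two solutions $u_1,u_2$, subtraction and testing with $u_1-u_2$ yields
\[
\langle P_{g_0}(u_1-u_2), u_1-u_2\rangle = 2\int_M f\bigl(e^{4u_1}-e^{4u_2}\bigr)(u_1-u_2)\,dV_{g_0}.
\]
The left-hand side is nonnegative while $(e^{4u_1}-e^{4u_2})(u_1-u_2)\geq 0$ and $f\leq 0$ make the right-hand side nonpositive; both sides therefore vanish, so $u_1-u_2\equiv c$ is constant. Substituting back into (\ref{eqn: cambio conforme di metrica (2)}) gives $f e^{4u_2}(e^{4c}-1)\equiv 0$, which forces $c=0$ since $f\not\equiv 0$.

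The delicate step is coercivity: the term $4k_P\bar u$ is unbounded below as $\bar u\to+\infty$ and must be balanced against $e^{4\bar u}A(v)$, so $A(v)$ has to be controlled from below in terms of $\|v\|_{\Htwo}$ strongly enough that the resulting loss is absorbed by the quadratic Paneitz gain. Jensen's inequality provides exactly this, with only linear-in-$\|v\|_{\Htwo}$ loss, which is what allows the argument to close.
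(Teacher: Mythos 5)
Your proposal is correct, and it implements the same basic direct-method strategy that the paper uses, but with two noteworthy departures. For coercivity, the paper normalizes $f$ so that $\|f\|_{L^1}=1$ and works throughout with the $f$-average $\bar u^f\coloneqq\int_M(-f)u\,dV_{g_0}$, bounding $|\bar u-\bar u^f|$ via Poincar\'e and then invoking the elementary scalar estimate $4Q_{g_0}t+e^{4t}\geq a_1|t|-a_2$; you instead split $u=\bar u+v$ with $\int_M v\,dV_{g_0}=0$, minimize the scalar function of $\bar u$ explicitly, and push the dependence on $v$ through Jensen plus Cauchy--Schwarz. These are equivalent in content (both hinge on Jensen for the probability measure $(-f)\,dV_{g_0}/\|f\|_{L^1}$ together with the Poincar\'e gap for $P_{g_0}$ on mean-zero functions), so this part is essentially a repackaging. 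For uniqueness, however, you do something genuinely different: you subtract two solutions, test the difference equation against $u_1-u_2$, and use nonnegativity of $P_{g_0}$ together with $f\leq 0$ and monotonicity of $t\mapsto e^{4t}$ to force $u_1-u_2$ to be a constant $c$, and then $c=0$ because $f\not\equiv 0$. The paper instead establishes the quantitative strict convexity estimate $\tfrac12 D^2E_f(u)[w,w]\geq\nu(u)\bigl[\langle P_{g_0}w,w\rangle+\|w\|_{L^2}^2\bigr]$ (Lemma~2.2) and deduces uniqueness of the minimizer from that. Your route is shorter and more elementary, while the paper's Hessian bound is reused downstream (notably in the stability Theorem~1.2, where the same estimate feeds the implicit function theorem and the second-order Taylor argument), so the paper gets more mileage out of its choice. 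Both arguments close the proof; there is no gap in yours.
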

The unique solution is the absolute minimizer of $E_f$, which is strictly convex and coercive if $f\leq 0$ (see Lemma \ref{lemma: E_f is strictly convex}). From this theorem and following an idea by \cite{Bismuth2000}, we can obtain a stability result for equation (\ref{eqn: cambio conforme di metrica (2)}), which guarantees the existence of relative minimizers for the energy $E_f$, even when $f$ changes sign.
\begin{theorem}\label{thm: stability result}
Let $(M,g_0)$ be closed and connected with $k_P<0$, $P_{g_0}\geq 0$ and $\ker(P_{g_0})=\left\{constants\right\}.$ Suppose $0\neq f\in C^{0,\alpha}(M)$ for some $\alpha\in(0,1)$ and with $f\leq 0$. Then there exists 
$\mathcal{N}\subset C^{0,\alpha}(M)$ open neighborhood of $f$ such that for all $h\in\mathcal{N}$ there exists
a strict relative minimizer for $E_h$ in $C^{4,\alpha}(M)$ smoothly dependent on $h$. In particular, if $f$ and $h$ are in $C^{\infty}(M)$, then the minimizer is in $C^{\infty}(M)$ as well.  
\end{theorem}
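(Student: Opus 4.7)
The plan is to apply the implicit function theorem in Banach spaces to the nonlinear operator encoding equation~(\ref{eqn: cambio conforme di metrica (2)}), using the unique absolute minimizer from Theorem~\ref{thm: metodo diretto} as the base point, and then to promote the resulting critical branch to a branch of strict relative minimizers by a second-variation argument.

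\smallskip

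First I would define
\[
F:C^{4,\alpha}(M)\times C^{0,\alpha}(M)\to C^{0,\alpha}(M),\qquad
F(u,h)\coloneqq P_{g_0}u+2Q_{g_0}-2he^{4u}.
\]
By Theorem~\ref{thm: metodo diretto} there is a unique $u_f\in \Hfour$ with $F(u_f,f)=0$; since $f\in C^{0,\alpha}$ and $e^{4u_f}\in L^p$ for every $p<\infty$ by Adams' inequality, standard elliptic bootstrap for $P_{g_0}$ (a fourth order elliptic operator with smooth coefficients) upgrades $u_f$ to $C^{4,\alpha}(M)$. The map $F$ is smooth on the product of Banach spaces, with partial derivative
\[
D_uF(u_f,f)[v]=P_{g_0}v-8fe^{4u_f}v\,.
\]
The key step is to show that $L\coloneqq D_uF(u_f,f)$ is an isomorphism from $C^{4,\alpha}(M)$ onto $C^{0,\alpha}(M)$. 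The operator $L$ is self-adjoint and elliptic of order four, hence Fredholm of index zero between these H\"older spaces, so it suffices to check $\ker L=\{0\}$. If $Lv=0$, testing against $v$ gives
\[
\langle P_{g_0}v,v\rangle-8\int_M f e^{4u_f}v^2\,dV_{g_0}=0.
\]
Since $f\leq 0$ both terms are nonnegative, so $v\in\ker(P_{g_0})=\{\text{constants}\}$ and $\int_M(-f)e^{4u_f}v^2\,dV_{g_0}=0$; because $f\not\equiv 0$ and $f\leq 0$ the weight $-fe^{4u_f}$ is nonnegative and not identically zero, forcing $v=0$.

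\smallskip

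The implicit function theorem then produces an open neighborhood $\mathcal{N}\subset C^{0,\alpha}(M)$ of $f$ and a smooth map $\mathcal N\ni h\mapsto u_h\in C^{4,\alpha}(M)$ with $F(u_h,h)=0$ and $u_f$ the value at $f$. To establish the minimizer property, I would compute the second variation
\[
E_h''(u_h)[v,v]=2\langle P_{g_0}v,v\rangle-16\int_M h e^{4u_h}v^2\,dV_{g_0}.
\]
At $(u_f,f)$ exactly the same nonnegativity argument as above gives $E_f''(u_f)[v,v]>0$ for $v\not\equiv 0$; combined with the Fredholm property this yields a spectral gap, i.e.\ there is $c>0$ with $E_f''(u_f)[v,v]\geq c\|v\|_{\Htwo}^2$. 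Since $(h,u_h)\mapsto E_h''(u_h)$ is continuous from $\mathcal N$ into the bounded bilinear forms on $\Htwo$, this coercivity persists (after possibly shrinking $\mathcal N$), so each $u_h$ is a strict relative minimizer of $E_h$ in $\Htwo$, and in particular in $C^{4,\alpha}(M)$. Finally, if $h\in C^\infty(M)$, iterating elliptic regularity on $P_{g_0}u_h=2he^{4u_h}-2Q_{g_0}$ yields $u_h\in C^\infty(M)$.

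\smallskip

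I expect the subtle points to be two. First, confirming that $L$ really is surjective onto $C^{0,\alpha}$ rather than just onto some weaker space: this is where one must invoke the Schauder theory for the fourth-order operator $P_{g_0}+V$ with $V=-8fe^{4u_f}\in C^{0,\alpha}(M)$ to close the Fredholm alternative at the $C^{4,\alpha}$--$C^{0,\alpha}$ level. Second, turning a pointwise positive second variation into a uniform coercivity estimate on $\Htwo$, which relies on the spectral gap above the zero eigenvalue of $P_{g_0}$ together with the dominated contribution of the zeroth order term $-16he^{4u_h}$; this is where the hypothesis $f\not\equiv 0$ is genuinely used.
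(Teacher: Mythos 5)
Your proposal is correct and follows essentially the same strategy as the paper: set up the implicit function theorem around the solution $u_f$ from Theorem~\ref{thm: metodo diretto}, compute the linearization $L=P_{g_0}-8fe^{4u_f}$, show it is invertible between $C^{4,\alpha}$ and $C^{0,\alpha}$, and then propagate the coercivity of the Hessian from $u_f$ to nearby $G(h)$ by continuity of the IFT branch. The one small difference in execution: the paper passes from strict convexity (Lemma~\ref{lemma: E_f is strictly convex}) via Lax--Milgram and elliptic regularity to invertibility and parameterizes around $w=0$ with the auxiliary map $Z(w,h)=P_{g_0}w+2(f-he^{4w})e^{4u_f}$, whereas you argue directly with $F(u,h)=P_{g_0}u+2Q_{g_0}-2he^{4u}$ and establish invertibility by Fredholm index zero plus a kernel computation using $f\le 0$, $f\not\equiv 0$; both routes are standard and equivalent.
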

In particular, we recover Theorem 2.6 of \cite{Baird-Fardoun-Regbaoui}.
We then consider a nonconstant smooth function $f_0\leq 0$ with $\max_{p\in M}f_0(p)=0$, all of whose maximum points are non-degenerate. We set $f_\lambda:=f_0+\lambda$, where $\lambda\in\R$. From Thm \ref{thm: stability result} we deduce the existence of a strict relative minimizers $u_\lambda\in C^{\infty}(M)$ of $E_\lambda:=E_{f_\lambda}$ for all sufficiently small $\lambda>0$, where $u_\lambda$ solves the equation
\begin{equation}\label{eqn: equazione target}
P_{g_0}u_{\lambda} + 2Q_{g_0} = 2f_{\lambda}e^{4u_\lambda} .  
\end{equation}
We observe that for functions $f$ with $\max_M f>0$ we have $\inf_{\Htwo}E_f=-\infty$. Indeed, choosing $w\in C^\infty(M)$, $0\leq w\leq 1$ and with support in the set $\left\{f>0\right\}$, then one
has $\lim_{t\to +\infty}E_f(tw)=-\infty$. Therefore, since for $\lambda>0$ sufficiently small $E_\lambda$ admits
a relative minimizer, we observe the presence of a ``mountain pass'' geometry and the intuition would suggest the existence of a further critical point, if we could guarantee some compactness properties.
In Proposition \eqref{prop: Palais-Smale}, we indeed prove that for a generic $f\in C^2(M)$ the functional $E_f$ possesses bounded Palais-Smale sequences at any level $\beta\in\R$. This fact enables one to conclude that for all sufficiently small $\lambda>0$ the functional $E_\lambda$ admits, in addition to a strict relative minimizer $u_\lambda$, a further critical point $u^\lambda\neq u_\lambda$ of mountain pass type.

However, this abstract result gives no additional information at all about how the ``limit'' geometry of the manifolds $(M,e^{2u^\lambda}g_0)$ could look like when $\lambda\downarrow 0$. In order to answer to this issue, firstly, we employ Struwe's ``monotonicity trick'' in a way similar to \cite{Borer-Galimberti-Struwe} to obtain a suitable sequence of ``large'' solutions $u^\lambda$. Secondly, thanks to an appropriate choice of a comparison function for our ``mountain pass'' geometry, we derive some refined estimates which enables us to bound the volume of these second solutions and to prove Theorem \ref{thm: second solution} and Theorem \ref{thm: bubbling analysis}. More precisely, we have:
\begin{theorem}\label{thm: second solution}
Let $(M,g_0)$ be closed and connected with $k_P<0$, $P_{g_0}\geq 0$ and $\ker(P_{g_0})=\left\{constants\right\},$
and consider any smooth, nonconstant function $f_0\leq 0=\max_{p\in M}f_0(p)$, all of whose maximum points are non-degenerate. Consider the family of functions $f_\lambda=f_0+\lambda,\,\lambda\in\R,$ and the associated family of functionals $E_\lambda(u)=E_{f_\lambda}(u)$, $u\in\Htwo$. There exists a number
$\lambda^\ast>0$ such that for almost every $0<\lambda<\lambda^\ast$ the functional $E_{\lambda}$ admits a strict relative minimizer $u_{\lambda}$ and a further critical point $u^{\lambda}\neq u_{\lambda}$.  
\end{theorem}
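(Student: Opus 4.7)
The plan is to combine the strict relative minimizer supplied by Theorem \ref{thm: stability result} with a mountain-pass argument, using Struwe's monotonicity trick (as in \cite{Borer-Galimberti-Struwe}) to bypass the lack of a priori boundedness for Palais--Smale sequences.

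\textbf{Mountain pass geometry.} Theorem \ref{thm: stability result} supplies, for some $\lambda^\ast>0$ and every $\lambda\in(0,\lambda^\ast)$, a strict relative minimizer $u_\lambda\in C^\infty(M)$ of $E_\lambda$ depending smoothly on $\lambda$. Let $p_0$ be a non-degenerate maximum point of $f_0$ and choose a bump $w\in C^\infty(M)$ with $w\geq 0$ whose support lies in a neighborhood where $f_\lambda=f_0+\lambda$ is positive. The argument recalled in the introduction then gives $E_\lambda(u_\lambda+tw)\to-\infty$ as $t\to+\infty$; by the smooth dependence $\lambda\mapsto u_\lambda$ one can fix $T$ large, uniformly in $\lambda$, so that $v_\lambda\coloneqq u_\lambda+Tw$ satisfies $E_\lambda(v_\lambda)<E_\lambda(u_\lambda)-1$. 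Setting
\[
\Gamma_\lambda=\{\gamma\in C([0,1];\Htwo):\gamma(0)=u_\lambda,\ \gamma(1)=v_\lambda\},\qquad \beta(\lambda)=\inf_{\gamma\in\Gamma_\lambda}\max_{s\in[0,1]}E_\lambda(\gamma(s)),
\]
strict relative minimality of $u_\lambda$ ensures $\beta(\lambda)>E_\lambda(u_\lambda)$, so the mountain-pass level is nontrivial.

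\textbf{Monotonicity and bounded Palais--Smale sequences.} Since
\[
E_\lambda(u)=E_{f_0}(u)-\lambda\int_M e^{4u}\,dV_{g_0}
\]
is strictly decreasing in $\lambda$ for each fixed $u\in\Htwo$, the map $\lambda\mapsto\beta(\lambda)$ is non-increasing, hence differentiable almost everywhere on $(0,\lambda^\ast)$. At each Lebesgue point $\lambda$ of $\beta'$, comparing almost-minimizing paths in $\Gamma_{\lambda-\epsilon}$ and $\Gamma_{\lambda+\epsilon}$ through the identity $\partial_\lambda E_\lambda(u)=-\int_M e^{4u}\,dV_{g_0}$ yields a Palais--Smale sequence $(u_n)$ for $E_\lambda$ at level $\beta(\lambda)$ whose volumes $\int_M e^{4u_n}\,dV_{g_0}$ are controlled by $|\beta'(\lambda)|+1$. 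Coupled with the coercivity features of $E_\lambda$ in the negative-$k_P$ regime, this produces a bound for $(u_n)$ in $\Htwo$.

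\textbf{Compactness and conclusion.} Proposition \ref{prop: Palais-Smale} now applies to this bounded sequence: a subsequence converges (strongly in $\Htwo$) to a critical point $u^\lambda$ of $E_\lambda$ at level $\beta(\lambda)$. Since $\beta(\lambda)>E_\lambda(u_\lambda)$ we conclude $u^\lambda\neq u_\lambda$, which gives the theorem for a.e.\ $\lambda\in(0,\lambda^\ast)$.

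\textbf{Main obstacle.} The delicate step is the boundedness claim in Step 2: one has to convert the pointwise differentiability of $\beta$ at $\lambda$ into a uniform volume bound along the selected PS sequence, using the quantitative comparison above, and then verify that the sequence remains bounded in $\Htwo$ and not merely in volume. A complementary difficulty, addressed by the genericity hypothesis in Proposition \ref{prop: Palais-Smale}, is excluding concentration of bounded PS sequences forced by the critical Adams--Moser--Trudinger scaling in dimension four; this is precisely the bubbling phenomenon investigated in Theorem \ref{thm: bubbling analysis}.
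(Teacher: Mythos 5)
Your proposal is correct and follows the same overall route as the paper: mountain-pass geometry anchored at the strict relative minimizer from Theorem~\ref{thm: stability result}, monotonicity of the mountain-pass level $\lambda\mapsto c_\lambda$, and Struwe's trick to extract, at each differentiability point, a Palais--Smale sequence satisfying the entropy bound $\int_M e^{4u_n}\,dV_{g_0}\le |c'_\lambda|+3$. Two remarks. First, the phrase ``coercivity features of $E_\lambda$'' is imprecise: $E_\lambda$ itself is \emph{not} coercive for $\lambda>0$. The actual mechanism is that the entropy bound plus Jensen's inequality gives $\bar u_n\le C(\lambda)$, and then bounding $E_\lambda(u_n)$ from above with $Q_{g_0}<0$, $f_0\le0$ controls $\langle P_{g_0}u_n,u_n\rangle$ and hence $\|u_n\|_{H^2}$; equivalently, one can observe $E_{f_0}(u_n)=E_\lambda(u_n)+\lambda\int_M e^{4u_n}\,dV_{g_0}$ is bounded and invoke coercivity of $E_{f_0}$ (Lemma~\ref{lemma: E_f is coercive}). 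Second, at the compactness step you quote Proposition~\ref{prop: Palais-Smale}, whereas the paper gives a direct argument in Proposition~\ref{prop: esistenza di una seconda soluzione}. Since Proposition~\ref{prop: Palais-Smale} already shows \emph{all} Palais--Smale sequences are bounded and converge, invoking it after establishing boundedness is logically redundant; indeed, Proposition~\ref{prop: Palais-Smale} alone would yield a second critical point for \emph{every} small $\lambda$, not merely a.e., making the monotonicity trick unnecessary for Theorem~\ref{thm: second solution}. The paper nonetheless takes the monotonicity route because the resulting entropy bound on $\int_M e^{4u^\lambda}\,dV_{g_0}$ is precisely what drives the bubbling analysis of Theorem~\ref{thm: bubbling analysis} --- and you correctly retain this bound, so your version serves that purpose as well.
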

The picture obtained by combining Theorem \ref{thm: stability result} and Theorem \ref{thm: second solution} reminds us of a two-branches bifurcation diagram, with a branch consisting of relative minimizers $u_\lambda$ smoothly converging to the unique solution of (\ref{eqn: cambio conforme di metrica (2)}) when $\lambda \downarrow 0$, and a second ``branch'' (defined a.e.) consisting of the ``large'' solutions $u^{\lambda}$. 
\begin{theorem}\label{thm: bubbling analysis}
Under the hypothesis of Theorem \ref{thm: second solution}, there exist a sequence $\lambda_n \downarrow 0$, a sequence of solutions $(u_n)_n$ of the equation 
\[
P_{g_0}u_n + 2Q_{g_0} = 2f_{\lambda_n}e^{4u_n}
\]
and there exists $I\in\N$ such that, for suitable $p_n^{(i)}\to p_\infty^{(i)}\in M$ with $f_0(p_\infty^{(i)})=0,1\leq i\leq I$, we obtain $u_n(p_n^{(i)})\to\infty$ and one of the following: either
\begin{enumerate}
  \item a subsequence $(u_n)_n$ converges locally uniformly to $-\infty$ on $M_\infty:=M\setminus \left\{p_\infty^{(1)},\cdots ,p_\infty^{(I)} \right\}$, or
  \item a subsequence $(u_n)_n$ converges locally smoothly on $M_\infty$ to $u_\infty$, which induces on $M_\infty$ a metric $g_\infty=e^{2u_\infty}g_0$ of finite total $Q$-curvature $Q_{g_\infty}=f_0$.
\end{enumerate}
In any case, for each $1\leq i\leq I$ and for suitable $r_n^{(i)}\downarrow 0$, either

a) $r_n^{(i)}/\sqrt{\lambda_n}\to 0$ and in normal coordinates around $p_\infty^{(i)}$ we have, by setting $x_n=\exp^{-1}(p_n^{(i)})$ and $\tilde{u}_n=u_n\circ\exp$, that
\[
\hat{u}_n(x):= \tilde{u}_n(x_n+r_n^{(i)}x)-\tilde{u}_n(x_n)\to w(x)=-\log\left(1+\frac{|x|^2}{4\sqrt{6}}\right)
\] 
in $C^{4,\alpha}_{loc}(\R^4)$, where $w$ induces a spherical metric $g=e^{2w}g_{\R^4}$ of $Q$-curvature $Q_g=1/2$ on $\R^4$, or

b) $r_n^{(i)}=c^{(i)}\sqrt{\lambda_n}$ for a suitable constant $c^{(i)}>0$ and in normal coordinates around $p_\infty^{(i)}$ we have, by setting $\tilde{u}_n=u_n\circ\exp$, that
\[
\hat{u}_n(x):= \tilde{u}_n(r_n^{(i)}x) +\frac{3}{4}\log(\lambda_n)+\log(c^{(i)})\to w(x)
\] 
in $C^{4,\alpha}_{loc}(\R^4)$, where the metric $g=e^{2w}g_{\R^4}$ on $\R^4$ has finite volume and finite total $Q$-curvature $Q_g(x)=1+\frac{1}{2}D^2f_0(p_\infty^{(i)})\left[x,x\right]$.

\end{theorem}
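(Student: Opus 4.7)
The plan is to carry out a concentration--compactness (``bubbling'') analysis on the sequence of ``large'' solutions $u_n=u^{\lambda_n}$ produced by Theorem \ref{thm: second solution}. The decisive a priori input is a uniform control on the volumes $V_n:=\int_M e^{4u_n}\,dV_{g_0}$ and on the energies $E_{\lambda_n}(u_n)$, furnished by the refined estimates obtained in this paper via the monotonicity trick and the mountain-pass comparison function. Since the mountain-pass critical values lie strictly above the relative minimum $E_{\lambda_n}(u_{\lambda_n})$, and any $L^\infty$-bounded subsequence of the $u_n$ would, by elliptic regularity for $P_{g_0}$ (invertible on the orthogonal complement of the constants under our kernel hypothesis), converge smoothly to a critical point of $E_{f_0}$---which by Theorem \ref{thm: metodo diretto} is the unique minimizer---we conclude $M_n:=\sup_M u_n\to\infty$ along a subsequence.

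An $\varepsilon_0$-regularity argument based on Adams' inequality, the $L^1$-bound on $e^{4u_n}$ and local elliptic estimates for $P_{g_0}$ then isolates a finite set $\{p_\infty^{(1)},\dots,p_\infty^{(I)}\}$ of concentration points, at each of which we select $p_n^{(i)}$ with $u_n(p_n^{(i)})\to\infty$. A localized Pohozaev-type identity, combined with Moser iteration tailored to the Paneitz operator, precludes concentration on the open set $\{f_0<0\}$ where $f_{\lambda_n}$ stays bounded away from zero; hence $f_0(p_\infty^{(i)})=0$, and by hypothesis each such point is a non-degenerate maximum. Away from the concentration set, subtracting the averages $\overline{u}_n$ and using uniform local $H^2$ bounds yields the dichotomy (i)/(ii) according to whether $\overline{u}_n\to-\infty$ or stays bounded; in case (ii) the local smooth limit $u_\infty$ solves (\ref{eqn: cambio conforme di metrica (2)}) with $f=f_0$ on $M_\infty$.

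For the local profile near each $p_\infty^{(i)}$, pass to $g_0$-normal coordinates and set $\tilde{u}_n=u_n\circ\exp$. The blow-up scale $r_n^{(i)}$ is determined by forcing the rescaled right-hand side to be of unit order at the origin, i.e.\ $(r_n^{(i)})^4\,|f_{\lambda_n}(p_n^{(i)})|\,e^{4u_n(p_n^{(i)})}\asymp 1$. Since $f_0(p_\infty^{(i)})=0$ with $D^2 f_0(p_\infty^{(i)})<0$ non-degenerate, Taylor expansion yields $f_{\lambda_n}(p_n^{(i)}+r_n^{(i)}x)=\lambda_n+\tfrac12 D^2 f_0(p_\infty^{(i)})[r_n^{(i)}x,r_n^{(i)}x]+O((r_n^{(i)}|x|)^3)$, and which of the two terms dominates after rescaling is exactly the dichotomy (a)/(b). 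In case (a), $r_n^{(i)}/\sqrt{\lambda_n}\to 0$, the constant $\lambda_n$ dominates on bounded sets; centering at $p_n^{(i)}$, the rescaled $\hat{u}_n$ satisfies---modulo lower-order terms arising from the Paneitz operator in normal coordinates---$\Delta^2 \hat{u}_n\to e^{4w}$ on $\R^4$, with $\hat{u}_n(0)=0$, $\hat{u}_n\le 0$ and uniformly bounded volume. The Lin classification of finite-volume solutions to $\Delta^2 w=e^{4w}$ on $\R^4$, together with the precise choice of $r_n^{(i)}$, forces $w(x)=-\log(1+|x|^2/(4\sqrt{6}))$, and elliptic bootstrap upgrades the convergence to $C^{4,\alpha}_{loc}$.

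In case (b), when $r_n^{(i)}/\sqrt{\lambda_n}\to c^{(i)}>0$ along a subsequence, re-center at $p_\infty^{(i)}$ itself and introduce the shift $\tfrac34\log\lambda_n+\log c^{(i)}$: this is exactly the amount needed so that after rescaling the coefficient $2(r_n^{(i)})^4 e^{4\tilde{u}_n(0)}$ captures the full ratio $f_{\lambda_n}(r_n^{(i)}x)/\lambda_n\to 1+\tfrac12 D^2 f_0(p_\infty^{(i)})[x,x]$, once $c^{(i)}$ is absorbed into the coordinate normalization. The limit equation is $\Delta^2 w=2[1+\tfrac12 D^2 f_0(p_\infty^{(i)})[x,x]]e^{4w}$, giving $Q_g(x)=1+\tfrac12 D^2 f_0(p_\infty^{(i)})[x,x]$, while finite volume and finite total $Q$-curvature pass to the limit from the uniform control on $V_n$. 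The main obstacles I anticipate are: (i) deducing the volume bound $V_n\le C$ from the mountain-pass/monotonicity structure, not merely from generic Palais--Smale information---this is where the specific comparison function of Theorem \ref{thm: second solution} plays a decisive role; (ii) localizing the concentration set at the maxima of $f_0$ despite $f_0\le 0$ globally, requiring careful local Pohozaev-type identities and treatment of lower-order curvature terms; and (iii) justifying the exact shift $\tfrac34\log\lambda_n+\log c^{(i)}$ in case (b) and verifying that it is precisely the scale at which the full quadratic structure of $f_0$ survives in the limit profile.
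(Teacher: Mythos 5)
Your outline captures the overall architecture of the paper's argument: the volume bound coming from the monotonicity trick (Lemma~\ref{lemma: mu e c primo mu} and Proposition~\ref{prop: esistenza di una seconda soluzione} give $\limsup_n\lambda_n\int_Me^{4u_n}\,dV_{g_0}\le 32\pi^2$, hence the global $L^1$ bound \eqref{eqn: global L1 bound}), the finite blow-up set via concentration--compactness in the spirit of \cite{Malchiodi2006}, the dichotomy governed by $\bar u_n$, and the classification of the rescaled limit. That said, two steps you sketch diverge from (or fall short of) the paper's proof in ways that matter.

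First, for excluding concentration inside $M^-=\{f_0<0\}$ you propose a Pohozaev-type identity plus Moser iteration. The paper does not use these; instead Lemma~\ref{lemma: pre-compattezza} proves a local $\sup_n\int_\Omega(\Delta_{g_0}v_n)^2\,dV_{g_0}\le C(\Omega)$ bound on $\Omega\subset\subset M^-$ by testing \eqref{eqn: l'eq risolta da v_n} against $\eta^2v_n$ and crucially exploiting the sign $f_{\lambda_n}<-\epsilon$ on $\mbox{supp}\,\eta$ to control $\int f_{\lambda_n}e^{4v_n}\eta^2 v_n$; this gives $C^{2,\alpha}_{\rm loc}$ convergence of $v_n$ on $M^-$ and combines with the energy quantization \eqref{eqn: concentrazione energia} to force blow-up points onto $\{f_0=0\}$. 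Your route is plausible in outline, but as stated it is not a proof and it bypasses the elementary sign mechanism that actually drives Lemma~\ref{lemma: pre-compattezza}.

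Second, in the spherical profile (case a)) you conclude from Lin's classification and the choice of scale that $w=-\log(1+|x|^2/(4\sqrt 6))$. This skips a genuine step: Lin's theorem \cite{Lin98} for finite-volume solutions of $\Delta^2w=e^{4w}$ also allows solutions with $\Delta w\ge\mu>0$ at infinity and quadratic decay of $w$. The paper explicitly rules these out by the Druet--Robert argument: a Green-function estimate gives $\int_{B_R(0)}|\Delta_{\hat g_n}\hat u_n|\,dV_{\hat g_n}=O(R^2)$ uniformly in $n$, whereas the degenerate alternative would force $\int_{B_R(0)}|\Delta w|\,dx\gtrsim R^4$ as $R\to\infty$, a contradiction. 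Without this exclusion the conclusion of alternative a) is not reached. A minor further point: the paper's case analysis actually has three cases (the borderline $|x_n|\asymp\sqrt{\lambda_n}$ is treated separately as case iii), though it eventually falls under your alternative b)), and the blow-up radius in case a) is fixed by $r_n^4\lambda_ne^{4\tilde u_n(x_n)}=1/2$ with the factor $\lambda_n$, not $|f_{\lambda_n}(p_n^{(i)})|$ as in your normalization, because $p_n^{(i)}$ may lie near the boundary of $K_n$ where $f_{\lambda_n}$ is small.
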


\section{Stability result}
In this section we prove Thm \ref{thm: metodo diretto} and Thm \ref{thm: stability result}. Throughout the rest
of the paper, the Paneitz operator is assumed to be nonnegative and with kernel consisting of constant functions. In view of these hypothesis, it is straightforward to see that there exists a constant $C\geq 1$ only
depending on $M$, such that for all $u\in\Htwo$
\begin{equation}\label{eqn: poincare per il laplaciano}
C^{-1}||\Delta_{g_0}u||^2_{L^2(M)} \leq \langle P_{g_0}u,u\rangle\leq C||\Delta_{g_0}u||^2_{L^2(M)} .
\end{equation}
As a consequence, the bilinear map
\begin{equation}\label{eqn: prodotto scalare equivalente}
\Htwo \times \Htwo \ni (u,v) \mapsto \langle P_{g_0}u,v\rangle + \int_M uv\, dV_{g_0} .
\end{equation}
defines an equivalent scalar product on $\Htwo$.
\begin{lemma}\label{lemma: E_f is coercive}
Let $(M,g_0)$ be closed and connected with $k_P<0$. Let $0\neq f\in C^0(M)$ with $f\leq 0$. Then the functional $E_f$ is coercive on $\Htwo$.
\end{lemma}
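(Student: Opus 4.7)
The natural first step is to decompose $u=\bar u+v$ where $\bar u=\int_M u\, dV_{g_0}$ and $\int_M v\, dV_{g_0}=0$; because $\ker(P_{g_0})$ consists of constants and $\vol(M,g_0)=1$, the energy rewrites as
\[
E_f(u)=\langle P_{g_0}v,v\rangle+4k_P\bar u+e^{4\bar u}\int_M|f|e^{4v}\, dV_{g_0}.
\]
Each piece has a transparent behaviour. The first is nonnegative and, by (\ref{eqn: poincare per il laplaciano}) together with the Poincar\'e inequality on zero-mean functions, comparable to $\|v\|_{\Htwo}^2$. The second is positive when $\bar u<0$ and linearly negative when $\bar u>0$. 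The third is nonnegative and explodes as $\bar u\to+\infty$, provided the integral $\int_M|f|e^{4v}\, dV_{g_0}$ is not too small.

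To quantify that proviso I would record a lower bound on $\int_M|f|e^{4v}\,dV_{g_0}$. Since $f\ne 0$ and $f\le 0$, there exist $\varepsilon>0$ and a measurable $S\subset M$ with $|S|>0$ on which $|f|\ge\varepsilon$. Combining Jensen's inequality on $S$ with the estimate $|\int_S v\, dV_{g_0}|\le\|v\|_{L^1(M)}\le C\|\Delta_{g_0}v\|_{L^2(M)}$ (H\"older plus Poincar\'e for mean-zero functions) yields
\[
\int_M|f|e^{4v}\, dV_{g_0}\ge c_0\,\exp\bigl(-C_1\|\Delta_{g_0}v\|_{L^2(M)}\bigr),
\]
for some $c_0,C_1>0$ depending only on $f$ and $(M,g_0)$.

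Assuming $\|u\|_{\Htwo}\to\infty$, equivalently $\bar u^2+\|v\|_{\Htwo}^2\to\infty$, I would conclude coercivity by case analysis on $\bar u$. If $\bar u\le 0$, the nonnegative quantity $4k_P\bar u=4|k_P||\bar u|$ together with the Paneitz term $\gtrsim\|v\|_{\Htwo}^2$ already forces $E_f(u)\to\infty$. If $\bar u\ge 0$ and $\|\Delta_{g_0}v\|_{L^2}^2\ge 8C|k_P|\bar u$, half of the Paneitz term absorbs the linear loss $-4|k_P|\bar u$ and the other half still blows up. Finally, if $\bar u\ge 0$ and $\|\Delta_{g_0}v\|_{L^2}^2<8C|k_P|\bar u$, i.e.\ $\|\Delta_{g_0}v\|_{L^2}\le C'\sqrt{\bar u}$, the bound above gives $e^{4\bar u}\int_M|f|e^{4v}\,dV_{g_0}\ge c_0 e^{4\bar u-C_1 C'\sqrt{\bar u}}\ge c_0 e^{2\bar u}$ for $\bar u$ large, an exponential overwhelming the linear loss $4|k_P|\bar u$. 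The main obstacle is precisely this degeneracy: because $f$ is allowed to vanish on a large set, no uniform positive lower bound on $\int_M|f|e^{4v}\,dV_{g_0}$ is available, so the gain from the exponential term erodes as $\|v\|_{\Htwo}$ grows, and the three pieces of $E_f(u)$ must be balanced against each other regime by regime, with the quantitative tradeoff between $\bar u$ and $\|\Delta_{g_0}v\|_{L^2}$ in the above case split doing the essential work.
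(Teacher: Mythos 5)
Your proof is correct, and the overall mechanism (Jensen plus Poincar\'e, with the Paneitz term absorbing the error) is the same as the paper's, but the bookkeeping is genuinely different. You decompose $u=\bar u+v$ with the \emph{unweighted} mean and apply Jensen on a sublevel set $S\subset\{|f|\ge\varepsilon\}$, which costs you the erosion factor $e^{-C_1\|\Delta_{g_0}v\|_{L^2}}$ in $\int_M|f|e^{4v}\,dV_{g_0}$ and forces the three-regime balance between $\bar u$ and $\|\Delta_{g_0}v\|_{L^2}$. The paper instead first normalizes $\|f\|_{L^1(M)}=1$ and introduces the $f$-weighted average $\bar u^f:=\int_M(-f)u\,dV_{g_0}$. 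Because $-f\,dV_{g_0}$ is then a probability measure on \emph{all} of $M$, Jensen gives $-\int_M fe^{4u}\,dV_{g_0}\ge e^{4\bar u^f}$ with no loss, and the residual mismatch $\bar u-\bar u^f=\int_M u(1+f)\,dV_{g_0}$ is controlled by Poincar\'e and absorbed into the Paneitz term, exactly as you absorb $\int_S v\,dV_{g_0}$. The case split you carry out by hand is then packaged into the elementary one-variable inequality $4Q_{g_0}t+e^{4t}\ge a_1|t|-a_2$, valid since $Q_{g_0}<0$. Your approach is more hands-on and makes the ``where does coercivity come from'' story explicit regime by regime; the paper's weighted average buys tightness of Jensen on the whole manifold, so the only tradeoff left to run is one-dimensional and the argument is cleaner. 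Both are complete proofs.
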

\begin{proof}
Since for any fixed $c>0$ there holds that
\[
E_f(u)=-Q_{g_0}\log c + E_{f/c}(u+(\log c)/4)\,, \;\; u\in\Htwo
\]
and moreover $||u||_{\Htwo}\to\infty$ iff $||u-\log c^{-1/4}||_{\Htwo}\to\infty$, we may assume that $||f||_{L^1(M)}=1\left(=\int_M -f\, dV_{g_0}\right)$. The general result will then follow by setting $c=||f||_{L^1(M)}>0$.

We define for any $u\in L^1(M)$ the $f$-average of $u$ as
\[
\bar{u}^f := \int_M -fu\, dV_{g_0} \,.
\]
Via Jensen's inequality, applied to the probability measure $-fdV_{g_0}$, we obtain for any
$u\in\Htwo$
\[
\int_M -fe^{4u}\, dV_{g_0} \geq \exp(4\bar{u}^f)
\]
and thus
\begin{equation}\label{eqn: E_f is coercive}
\begin{split}
E_f(u) & \geq \langle P_{g_0}u,u\rangle + 4Q_{g_0}\int_M u\, dV_{g_0} +   \exp(4\bar{u}^f)\\  
       & = \langle P_{g_0}u,u\rangle +  \left[4Q_{g_0}\bar{u}^f + \exp(4\bar{u}^f) \right]
          + 4Q_{g_0}\int_M u(1+f)\, dV_{g_0}\,.
\end{split} 
\end{equation}
We set $\bar{u}:= \int_M u\,dV_{g_0}$ (recall that $\vol(M,g_0)=1$) and note that
\begin{equation*}
\begin{split}
\left|\int_M u(1+f)\, dV_{g_0} \right| & = | \bar{u} - \bar{u}^f \,| \leq  ||u-\bar{u}||_{L^2(M)} + ||\bar{u}-\bar{u}^f\,||_{L^2(M)}\\
&  \leq \bar{C} ||\nabla u||_{L^2(M)}\leq \bar{C} ||\Delta u||_{L^2(M)}\,,
\end{split}
\end{equation*}
by a variant of the Poincar\'e inequality. 

Hence, by \eqref{eqn: E_f is coercive} and \eqref{eqn: poincare per il laplaciano},
\[
\begin{split}
E_f(u) &\geq \langle P_{g_0}u,u\rangle +  \left[4Q_{g_0}\bar{u}^f + \exp(4\bar{u}^f) \right]
-\bar{C} ||\Delta u||_{L^2(M)}\\
& \geq \frac{1}{2C} ||\Delta u||_{L^2(M)}^2 +  \left[4Q_{g_0}\bar{u}^f + \exp(4\bar{u}^f) \right],
\end{split}
\]
where the last inequality holds uniformly if $||u||_{\Htwo}>>0$.
Since $Q_{g_0}<0$, for suitable constants $a_1,a_2>0$ we have for every $t\in\R$ that 
\[
4Q_{g_0}t+e^{4t}\geq a_1|t|-a_2\,.
\] 
It also follows
\[
E_f(u) \geq \frac{1}{2C} ||\Delta u||_{L^2(M)}^2 +a_1|\bar{u}^f|-a_2 \,.
\]
Again by the variant of the Poincar\'e inequality, it also follows 
\[
||u||_{L^2(M)}\leq \bar{C}||\Delta u||_{L^2(M)} + |\bar{u}^f|
\]
and thus we obtain
\[
E_f(u)\geq \frac{1}{4C} ||\Delta u||_{L^2(M)}^2 + a_1||u||_{L^2(M)} -a_2  \,,
\]
for all $||u||_{\Htwo}$ sufficiently large. The claim follows.
\end{proof}

\begin{lemma}\label{lemma: E_f is strictly convex} 
Under the same hypothesis as in the previous lemma we have that for any $u\in\Htwo$ there exists a number $\nu=\nu(u)>0$ such that
\begin{equation}\label{eqn: non degenericita' dell'Hessiano}
\frac12D^2E_f(u)\left[w,w\right]\geq \nu \left[\langle P_{g_0}w,w\rangle+  ||w||^2_{L^2(M)}\right]  
\end{equation}
for any $w\in\Htwo$. 
\end{lemma}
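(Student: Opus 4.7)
\smallskip
\noindent\textbf{Plan.} I would start with the direct computation of the second variation from \eqref{eqn: the energy}, which gives
\[
\tfrac{1}{2}D^2E_f(u)[w,w] \;=\; \langle P_{g_0}w,w\rangle + 8\int_M (-f)e^{4u}w^2\,dV_{g_0}.
\]
Since $-f\geq 0$, both summands are nonnegative, which already yields the trivial bound $\tfrac{1}{2}D^2E_f(u)[w,w]\geq \langle P_{g_0}w,w\rangle$. To additionally control $\|w\|_{L^2(M)}^2$, the plan reduces to establishing a sharpened inequality of the form
\[
\langle P_{g_0}w,w\rangle + 8\int_M (-f)e^{4u}w^2\,dV_{g_0}\;\geq\;\nu_0\,\|w\|_{L^2(M)}^2,\qquad \forall\,w\in\Htwo,
\]
for some $\nu_0=\nu_0(u)>0$. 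Averaging the sharpened and the trivial bounds then gives the lemma with, e.g., $\nu:=\tfrac12\min(1,\nu_0)$.

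I would prove the sharpened inequality by contradiction. If it failed, there would exist a sequence $(w_n)\subset\Htwo$ with $\|w_n\|_{L^2(M)}=1$ along which the left-hand side tends to $0$. Since both summands are nonnegative, both converge to $0$; in particular $\langle P_{g_0}w_n,w_n\rangle\to 0$, and by \eqref{eqn: poincare per il laplaciano} also $\|\Delta_{g_0}w_n\|_{L^2(M)}\to 0$, so $(w_n)$ stays bounded in $\Htwo$. Up to a subsequence, $w_n\rightharpoonup w$ weakly in $\Htwo$ and $w_n\to w$ strongly in every $L^q(M)$, $q<\infty$, by Rellich--Kondrachov on the closed $4$-manifold. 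Weak lower semicontinuity of the quadratic form $\langle P_{g_0}\cdot,\cdot\rangle$ yields $\langle P_{g_0}w,w\rangle=0$, and the assumption $\ker(P_{g_0})=\{\text{constants}\}$ forces $w$ to be constant; strong $L^2$-convergence combined with $\vol(M,g_0)=1$ then gives $|w|\equiv 1$, hence $w\neq 0$.

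The only genuinely delicate step is passing to the limit in the nonlinear term. Here I would invoke Adams' inequality \cite{Adams88}, already used by the author to make $E_f$ well defined: for the fixed $u\in\Htwo$ one has $e^{4u}\in L^p(M)$ for every $p<\infty$, so $(-f)e^{4u}\in L^p(M)$ since $f\in C^0(M)$. Combined with $w_n^2\to w^2$ strongly in every $L^q(M)$, Hölder's inequality gives
\[
\int_M (-f)e^{4u}w_n^2\,dV_{g_0}\;\longrightarrow\;\int_M (-f)e^{4u}w^2\,dV_{g_0}\;=\;w^2\int_M (-f)e^{4u}\,dV_{g_0}.
\]
Because $f\leq 0$ is continuous and nontrivial and $e^{4u}>0$, the right-hand side is strictly positive, while the left-hand side was assumed to vanish in the limit -- contradiction. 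The main obstacle is precisely this limit passage through the exponential nonlinearity, which is why the critical Sobolev--Adams integrability is indispensable; the rest of the argument is essentially a spectral-gap/Fredholm-alternative observation for $P_{g_0}$.
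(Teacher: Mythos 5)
Your argument is correct and follows essentially the same route as the paper: both compute the second variation, argue by contradiction with a normalized sequence, use nonnegativity of both summands to extract $\langle P_{g_0}w_n,w_n\rangle\to 0$, pass to a weak $\Htwo$/strong $L^2$ limit which must be a nonzero constant, and derive a contradiction from $\int_M(-f)e^{4u}\,dV_{g_0}>0$. The only cosmetic difference is that you normalize by $\|w_n\|_{L^2}=1$ and then average with the trivial bound, whereas the paper normalizes by $\langle P_{g_0}w_n,w_n\rangle+\|w_n\|_{L^2}^2=1$ directly; you also spell out the limit passage in the nonlinear term via Adams and Hölder, which the paper leaves implicit.
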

\begin{proof}
For any $u,w\in\Htwo$, $w\neq 0$, we have
\begin{eqnarray*}
\frac12D^2E_f(u)\left[w,w\right]&=&\frac12\frac{d^2}{ds^2}\bigg|_{s=0} E_f(u+sw) \\
                  & =&  \langle P_{g_0}w,w\rangle -8 \int_M fe^{4u}w^2 dV_{g_0}\,.\\
\end{eqnarray*}
We suppose now that $0=\inf\left\{\langle P_{g_0}w,w\rangle -\int_M 8fe^{4u}w^2\, dV_{g_0}: w\in\Htwo\right\}$ for some $u\in\Htwo$. Hence, there exists a sequence $(w_n)_n$ such that $\langle P_{g_0}w_n,w_n\rangle+  ||w_n||^2_{L^2(M)}=1$, $\langle P_{g_0}w_n,w_n\rangle -\int_M 8fe^{4u}w_n^2\, dV_{g_0}\to 0$, $w_n\rightharpoonup w$ in $\Htwo$ and $w_n\to w$ in $L^2(M)$.

At once we see that $\langle P_{g_0}w_n,w_n\rangle\to 0$ and, by Poincar\'e inequality, that $w_n-\bar{w}_n\to 0$ in $L^2(M)$. Therefore, there holds $w\equiv c$ with $c=\pm 1$. But then $-\int_M 8fe^{4u}w_n^2\, dV_{g_0} \to  0=-\int_M 8fe^{4u}\, dV_{g_0}>0$. This contradiction proves the Lemma.
\end{proof}
In view of Lemma \ref{lemma: E_f is strictly convex} and \ref{lemma: E_f is coercive} we can apply the direct
method of the Calculus of Variations and see that $E_f$ admits a unique absolute minimizer $u_f\in\Htwo$, which solves
\begin{equation}\label{eqn: formulazione debole}
\langle P_{g_0}u_f,v\rangle + 2Q_{g_0}\int_M v\, dV_{g_0} = 2\int_M fe^{4u_f}v\, dV_{g_0}  
\end{equation}
for any $v\in\Htwo$. By elliptic regularity theory we see that $u_f\in\Hfour$ and therefore Theorem \ref{thm: metodo diretto} follows. 

We note that, if we further impose $f\in C^{0,\alpha}(M)$, it follows by the embedding $\Hfour\subset
C^{1,\alpha}(M)$ and Schauder's estimates that $u_f\in C^{4,\alpha}(M)$. We have:
\begin{proposition}\label{prop: stability result 1}
Let $(M,g_0)$ be closed and connected with $k_P<0$. Let $0\neq f\in C^{0,\alpha}(M)$ for some
$0<\alpha<1$ and $f\leq 0$. Then there exist $\mathcal{N}\subset C^{0,\alpha}(M)$ open neighborhood of $f$ and $G\in C^1(\mathcal{N},C^{4,\alpha}(M))$ such that, for any $h\in\mathcal{N}$, $G(h)$ is a classical solution of
\[
P_{g_0}G(h) + 2Q_{g_0} = 2he^{4G(h)} . 
\] 
\end{proposition}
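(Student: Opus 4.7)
The plan is a standard application of the Implicit Function Theorem to the map
\[
F : C^{4,\alpha}(M) \times C^{0,\alpha}(M) \to C^{0,\alpha}(M), \qquad F(u,h) := P_{g_0}u + 2Q_{g_0} - 2h\, e^{4u},
\]
based at the pair $(u_f, f)$, where $u_f$ is the unique minimizer of $E_f$ supplied by Theorem \ref{thm: metodo diretto}. Since $f \in C^{0,\alpha}(M)$, the embedding $\Hfour \hookrightarrow C^{1,\alpha}(M)$ and Schauder estimates applied to (\ref{eqn: cambio conforme di metrica (2)}) upgrade $u_f$ to $C^{4,\alpha}(M)$, so $F(u_f,f) = 0$ holds in $C^{0,\alpha}(M)$. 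I would first check that $F$ is smooth: $P_{g_0}$ is bounded linear from $C^{4,\alpha}(M)$ to $C^{0,\alpha}(M)$, the Nemytskii map $u \mapsto e^{4u}$ is smooth on the Banach algebra $C^{4,\alpha}(M)$, and pointwise multiplication $C^{0,\alpha}(M) \times C^{4,\alpha}(M) \to C^{0,\alpha}(M)$ is continuous bilinear.

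The essential step is to show that the partial derivative
\[
L w := D_u F(u_f, f)[w] = P_{g_0} w - 8 f e^{4u_f} w
\]
is a topological isomorphism $C^{4,\alpha}(M) \to C^{0,\alpha}(M)$. Injectivity comes for free from Lemma \ref{lemma: E_f is strictly convex} applied at $u = u_f$: if $Lw = 0$, testing against $w$ yields
\[
0 = \langle P_{g_0} w, w \rangle - 8 \int_M f e^{4u_f} w^2\, dV_{g_0} \geq \nu \bigl( \langle P_{g_0} w, w \rangle + \|w\|^2_{L^2(M)} \bigr),
\]
forcing $w \equiv 0$.

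For surjectivity I would proceed in two stages. First, produce a weak $\Htwo$-solution of $Lw = g$ via Lax-Milgram applied to the symmetric bilinear form $a(w,v) := \langle P_{g_0} w, v \rangle - 8 \int_M f e^{4u_f} w v\, dV_{g_0}$ on $\Htwo$; this form is bounded on $\Htwo$ (the zeroth-order coefficient $fe^{4u_f}$ lies in $L^\infty(M)$, while $\langle P_{g_0}\cdot,\cdot\rangle$ is comparable to $\|\Delta_{g_0}\cdot\|_{L^2}^2$ by (\ref{eqn: poincare per il laplaciano})) and coercive by Lemma \ref{lemma: E_f is strictly convex}. Second, for data $g \in C^{0,\alpha}(M)$, upgrade the weak solution to $C^{4,\alpha}(M)$ by Schauder regularity for the uniformly elliptic fourth-order operator $L$, whose lower-order coefficient is of class $C^{0,\alpha}$ (Agmon-Douglis-Nirenberg estimates). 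The open mapping theorem then promotes the continuous bijection $L$ to an isomorphism, and the Implicit Function Theorem delivers a neighborhood $\mathcal{N} \subset C^{0,\alpha}(M)$ of $f$ together with a $C^1$ map $G : \mathcal{N} \to C^{4,\alpha}(M)$ satisfying $G(f) = u_f$ and $F(G(h), h) = 0$ on $\mathcal{N}$.

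I expect the regularity bootstrap from $\Htwo$ to $C^{4,\alpha}$ to be the only genuine technical point: the coercivity input from Lemma \ref{lemma: E_f is strictly convex} is already available and the remaining assembly is purely formal Banach-space calculus.
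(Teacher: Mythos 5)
Your proposal is correct and follows essentially the same strategy as the paper: both rest on the Implicit Function Theorem, with the invertibility of the linearized operator $L = P_{g_0} - 8fe^{4u_f}$ established via the coercivity of Lemma \ref{lemma: E_f is strictly convex}, Lax--Milgram, and elliptic (Agmon/Schauder) regularity. The only cosmetic difference is that the paper re-centers the problem at $(0,f)$ by working with $Z(w,h) = P_{g_0}w + 2(f - he^{4w})e^{4u_f}$ and then translating by $u_f$ at the end, whereas you apply the IFT directly at the base point $(u_f, f)$; since $Z(w,h) = F(w+u_f,h)$ the two formulations yield the identical linearization, so this is a presentational choice, not a different argument.
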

\begin{proof}
We consider the map
\[
Z: C^{4,\alpha}(M) \times C^{0,\alpha}(M) \to C^{0,\alpha}(M)
\]
\[
(w,h)\stackrel{Z}{\longmapsto} P_{g_0}w +2\left(f-he^{4w}  \right)e^{4u_f},
\]
where $u_f\in C^{4,\alpha}(M)$ is the unique solution of (\ref{eqn: cambio conforme di metrica (2)}), given by (\ref{eqn: formulazione debole}). $Z$ is clearly $C^1$, $Z(0,f)=0$ and it is straightforward to see that
\[
D_wZ(0,f)= P_{g_0}\cdot -8fe^{4u_f}\cdot \in \mathcal{L}(C^{4,\alpha}(M);C^{0,\alpha}(M))\,.
\]
From \ref{eqn: non degenericita' dell'Hessiano}, Lax-Milgram Theorem and standard elliptic regularity arguments, we infer
\[
D_wZ(0,f) \in \mathcal{I}nv(C^{4,\alpha}(M);C^{0,\alpha}(M))\,.
\]
We apply the Implicit Function Theorem and obtain an open neighborhood $\mathcal{N}\subset C^{0,\alpha}(M)$ of $f$, $\mathcal{M}_0\subset C^{4,\alpha}(M)$ open neighborhood of 0 and $G_0\in C^1(\mathcal{N},\mathcal{M}_0)$ such that for any $h\in\mathcal{N}$
\[
Z(G_0(h),h) = 0\,;
\]
that is $ P_{g_0}G_0(h) +2\left(f-he^{4G_0(h)}  \right)e^{4u_f} = 0$. Since $u_f$ solves (\ref{eqn: cambio conforme di metrica (2)}), we obtain 
\[
 P_{g_0}\left(G_0(h) + u_f\right) +2Q_{g_0} = 2he^{4(G_0(h)+u_f)}\,.
\]
Therefore, setting
\[
G(h):=G_0(h)+u_f\,,
\]
we obtain the desired conclusion.
\end{proof}
\begin{proof}[Proof of Thm \ref{thm: stability result}]
From equation (\ref{eqn: non degenericita' dell'Hessiano}) we see that, for all $w\in\Htwo$ with $||w||^2_{\Htwo} =1  $, there holds
\[
D^2E_f(u_f)\left[w,w\right] = 2\langle P_{g_0}w,w\rangle -16\int_M fe^{4u_f}w^2\, dV_{g_0}\geq 2\nu>0\,.
\]
For $h\in\mathcal{N}$ we consider $G(h)\in C^{4,\alpha}(M)$, defined as in the proof of Proposition \ref{prop: stability result 1}. Therefore $G(h)$ is a critical point
of the functional $E_h$, whose Hessian at the point $G(h)$ in the unit direction vector $w$ is
\[
D^2E_h(G(h))\left[w,w\right] = 2\langle P_{g_0}w,w\rangle -16\int_M he^{4G(h)}w^2\, dV_{g_0}\,.
\]
Since the map $G$ is $C^1$, we have $e^{4G(h)}\to e^{4u_f}$ in $C^{4,\alpha}(M)$ when $h\to f$ in $C^{0,\alpha}(M)$ and therefore
\[
\int_M \left(he^{4G(h)}w^2-fe^{4u_f}w^2\right)\, dV_{g_0} \to 0
\]
 when $h\to f$ in $C^{0,\alpha}(M)$, uniformly in $w$, $||w||_{\Htwo}=1$. Therefore,
by choosing a smaller neighborhood $\mathcal{N}$ of $f$ from the beginning and by homogeneity of the Hessian, we obtain
\[
D^2E_h(G(h))\left[w,w\right] > \nu ||w||^2_{\Htwo}
\]
for all $h\in\mathcal{N}$ and $w\in\Htwo$. Recalling that $G(h)$ is a critical point of $E_h$, by means of a 
second order Taylor expansion we conclude that $G(h)$ is a strict relative minimizer for $E_h$.
\end{proof}
\section{Existence of a second critical point}
Let $f_0\leq 0$ be a nonconstant smooth function with $\max_{p\in M}f_0(p)=0$, all of whose maximum points are non-degenerate. Set $f_\lambda:=f_0+\lambda$, $\lambda\in\R$, and consider $E_\lambda(u):=E_{f_\lambda}(u)$, $u\in\Htwo$. By Theorem \ref{thm: stability result} we deduce the existence
of a number $\lambda_0>0$ such that for any $\lambda\in\Lambda_0=\left(0,\lambda_0 \right]$ the functional
$E_\lambda$ admits a strict relative minimizer $u_\lambda\in C^\infty(M)$, depending smoothly on $\lambda$. In particular, calling $u_0$ the unique (smooth) solution of (\ref{eqn: cambio conforme di metrica (2)}) for $f=f_0$, we see that, as $\lambda \downarrow 0$, $u_\lambda\to u_0$ smoothly in $\Htwo$.
Hence, after replacing $\lambda_0$ with a smaller number $1/4>\lambda_0>0$, if necessary, 
we can find $\rho >0$ such that 
\begin{equation}\label{eqn: struttura mountain-pass}
 \begin{split}
  E_\lambda(u_\lambda) =\inf_{||u-u_0||_{H^2}<\rho}E_\lambda(u)
  &\le \sup_{\mu,\nu \in \Lambda_0} E_\mu(u_\nu)\\
  &< \beta_0:=\inf_{\mu\in\Lambda_0;\,\rho/2<||u-u_0||_{H^2}<\rho}E_{\mu}(u),
 \end{split}
\end{equation}
uniformly for all $\lambda\in\Lambda_0$. Fix some number $\lambda\in\Lambda_0$. Recalling that for $\lambda > 0$ the functional $E_\lambda$ is unbounded from below, it is also possible to fix a function $v_{\lambda}\in \Htwo$ 
such that
\[
E_{\lambda}(v_{\lambda})<E_{\lambda}(u_{\lambda})
\] 
and hence 
\begin{equation}\label{eqn: definizione di c_lambda}
 c_\lambda = \inf_{p\in P}\;\max_{t\in[0,1]} E_{\lambda}(p(t)) \ge 
 \beta_0> E_{\lambda}(u_{\lambda}),
\end{equation}
where 
\begin{equation}\label{eqn: l'insieme dei cammini}
  P = \{p\in C^0\left([0,1];  \Htwo \right):\; p(0)=u_0,\, p(1)= v_{\lambda} \}.
\end{equation}
Because $u_\lambda\to u_0$ smoothly in $\Htwo$ when $\lambda \downarrow 0$, it is possible to fix the initial point of the comparison paths $p\in P$ to be $u_0$ instead of $u_\lambda$, provided that $\lambda_0$ is sufficiently small.

For a suitable choice of $v_\lambda$, we obtain an explicit and useful estimate of the mountain-pass energy level
$c_\lambda$ associated with $P$.
\begin{proposition}\label{prop: stima logaritmica}
For any $K>32\pi^2$ there is $\lambda_K\in]0,\lambda_0/2]$ such that for any $0<\lambda<\lambda_K$ 
there is $v_{\lambda}\in \Htwo$ so that choosing $v_{\mu}=v_{\lambda}$ 
for every $\mu\in [\lambda,2\lambda]$ we obtain the bound $c_\mu\le K\log(1/\mu)$.
\end{proposition}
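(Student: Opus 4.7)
The plan is to construct, for each sufficiently small $\lambda>0$, an explicit comparison function $v_\lambda$ and a linear path from $u_0$ to $v_\lambda$, and to show that its maximum energy is controlled by $K\log(1/\mu)$ uniformly in $\mu\in[\lambda,2\lambda]$. I transplant a standard $Q$-curvature bubble at a maximum point of $f_0$, so that the Adams-type sharp constant $32\pi^2$ for the Paneitz operator appears as the natural threshold. Fix a maximum point $p_0\in M$ of $f_0$; by non-degeneracy, $f_0(p_0)=0$ and $A:=-D^2 f_0(p_0)$ is positive definite, so in normal coordinates $f_0(\exp_{p_0}(x))=-\tfrac12\langle Ax,x\rangle+O(|x|^3)$. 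For a scale $\mu>1$, set $U_\mu(x):=\log\tfrac{2\mu}{1+\mu^2|x|^2}$ and, with a cutoff $\eta\in C_c^\infty(B_{2\delta}(p_0))$ equal to $1$ on $B_\delta(p_0)$, put $\tilde U_\mu:=\eta\,U_\mu\in\Htwo$. The rescaling $y=\mu x$ together with Taylor expansion of $f_0$ produces the essential asymptotics as $\mu\to\infty$:
\[
\langle P_{g_0}\tilde U_\mu,\tilde U_\mu\rangle = 32\pi^2\log\mu+O(1),\qquad \int_M e^{4\sigma\tilde U_\mu}\,dV_{g_0}\asymp \mu^{4\sigma-4}\quad(\sigma>1),
\]
\[
\int_M f_0\,e^{4\sigma\tilde U_\mu}\,dV_{g_0}\asymp -\mu^{4\sigma-6}\quad(\sigma>3/2),\qquad \Bigl|\int_M \tilde U_\mu\,dV_{g_0}\Bigr|+|\langle P_{g_0}u_0,\tilde U_\mu\rangle|=O(1).
\]

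Given $K>32\pi^2$, I pick $T>1$ and $\alpha>0$ with $\alpha(4T-4)>1$ and $32\pi^2\,T^2\alpha<K$; this is possible because $T^2/(4T-4)$ achieves its minimum value $1$ at $T=2$, so the infimum of $32\pi^2\,T^2\alpha$ subject to $\alpha(4T-4)\ge 1$ is exactly $32\pi^2$, and the strict inequality $K>32\pi^2$ provides the needed slack. Set $\mu_\lambda:=\lambda^{-\alpha}$, $v_\lambda:=u_0+T\tilde U_{\mu_\lambda}$, and take the linear path $p_\lambda(t):=u_0+tT\tilde U_{\mu_\lambda}$, $t\in[0,1]$. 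Admissibility of $p_\lambda$ for the mountain-pass geometry with $v_\mu=v_\lambda$ follows from the volume asymptotic: $\mu\int_M e^{4v_\lambda}\,dV_{g_0}\gtrsim \lambda\,\mu_\lambda^{4T-4}=\lambda^{1-\alpha(4T-4)}\to+\infty$ uniformly for $\mu\in[\lambda,2\lambda]$, and this exponential blow-up dominates every polylog-in-$\lambda$ contribution to $E_\mu(v_\lambda)$, giving $E_\mu(v_\lambda)<E_\mu(u_\mu)$ for $\lambda$ small. Using the Euler--Lagrange identity $P_{g_0}u_0+2Q_{g_0}=2f_0\,e^{4u_0}$ to cancel the cross-terms yields
\[
E_\mu(p_\lambda(t))=E_\mu(u_0)+32\pi^2(tT)^2\alpha\log(1/\lambda)+O(1)-\int_M f_\mu\,e^{4u_0}\bigl(e^{4tT\tilde U_{\mu_\lambda}}-1\bigr)dV_{g_0}
\]
uniformly in $t\in[0,1]$ and $\mu\in[\lambda,2\lambda]$. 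A direct calculus analysis, balancing the quadratic growth in $t$ against the eventual exponential blow-up of the integrand, shows that the maximum over $t$ is attained at some $t^\ast(\lambda)\in[0,1]$ and satisfies $\max_t E_\mu(p_\lambda(t))\le 32\pi^2\,T^2\alpha\log(1/\lambda)+O(1)$. Since $\log(1/\mu)=\log(1/\lambda)+O(1)$ for $\mu\in[\lambda,2\lambda]$ and $32\pi^2\,T^2\alpha<K$, this gives $c_\mu\le K\log(1/\mu)$ for $\lambda$ sufficiently small.

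The main technical obstacle is handling the positive spurious contribution $-\int_M f_0\,e^{4p_\lambda(t)}\ge 0$ coming from the sign $f_0\le 0$: the quadratic vanishing $|f_0(x)|\asymp|x|^2$ near $p_0$ interacts with the exponential concentration of $e^{4tT\tilde U_{\mu_\lambda}}$ to produce terms of order $\mu_\lambda^{4tT-6}$ which threaten to overwhelm the quadratic term. The constraint $\alpha(4T-4)>1$ (so that the $-\mu V$ term eventually governs), together with the elementary inequality $4\sigma-6<4\sigma-4$, guarantees that throughout the critical range of $t$ either the quadratic term or the $-\mu V$ term sets the leading order; absorbing the various $O(1)$ corrections (from the cutoff $\eta$, the Taylor remainder of $f_0$, and the lower-order Ricci terms in $P_{g_0}$) into the strict inequality $32\pi^2\,T^2\alpha<K$ then delivers the claimed logarithmic bound uniformly across the dyadic interval $\mu\in[\lambda,2\lambda]$.
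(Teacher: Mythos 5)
There is a genuine gap, and it is in exactly the place the approach has to be most careful. Your asymptotics $\bigl|\int_M \tilde U_\mu\,dV_{g_0}\bigr|+|\langle P_{g_0}u_0,\tilde U_\mu\rangle|=O(1)$ are false: on a ball of fixed radius, $\int U_\mu\,dx\sim -c\log\mu$ and $\langle P_{g_0}u_0,\tilde U_\mu\rangle\sim c'\log\mu$. After you cancel the cross terms via the Euler--Lagrange equation, the residual linear term is $4\int_M f_0 e^{4u_0}\phi\,dV_{g_0}$ with $\phi=tT\tilde U_{\mu_\lambda}$, and a direct computation (using the quadratic vanishing of $f_0$ at $p_0$ and the fact that $\tilde U_\mu<0$ on most of its fixed support) shows this is $\asymp c\,t\,\log\mu_\lambda=c\,t\,\alpha\log(1/\lambda)$ with $c>0$, not $O(1)$. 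So the displayed formula for $E_\mu(p_\lambda(t))$ omits a linear-in-$t$ term of the same $\log(1/\lambda)$ order as the quadratic one.

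The more serious failure is admissibility of the endpoint. You yourself record $\int_M f_0 e^{4\sigma\tilde U_\mu}\,dV_{g_0}\asymp -\mu^{4\sigma-6}$, and this is a genuine power of $\lambda$, not a polylog. With $\mu_\lambda=\lambda^{-\alpha}$ the two leading pieces of $E_\mu(v_\lambda)$ scale as
\[
-\int_M f_0\,e^{4v_\lambda}\,dV_{g_0}\;\asymp\;+\,\lambda^{-\alpha(4T-6)},\qquad
-\mu\int_M e^{4v_\lambda}\,dV_{g_0}\;\asymp\;-\,\lambda^{1-\alpha(4T-4)},
\]
and the negative piece wins only when $1-\alpha(4T-4)<-\alpha(4T-6)$, i.e.\ $\alpha>\tfrac12$. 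Your parameter choice approaches $(T,\alpha)\to(2,\tfrac14)$ precisely to push $32\pi^2 T^2\alpha$ down to $32\pi^2$, but then $\alpha<\tfrac12$ and $E_\mu(v_\lambda)\to+\infty$: the comparison function is not admissible. Imposing $\alpha>\tfrac12$ together with $\alpha(4T-4)>1$ forces $T^2\alpha\ge\tfrac98$, so your construction can only reach $K>36\pi^2$. The root cause is that your bubble is cut off at a $\lambda$-independent radius $\delta$, so a large annulus where $f_0<0$ lies inside its support and contributes with the wrong sign. The paper avoids this entirely by taking a truncated-logarithm test function whose support shrinks at the scale $\sqrt{\lambda}$, where $f_\lambda=f_0+\lambda\ge\lambda/2>0$; on that support the curvature term has a definite favourable sign, the ``bad'' $\int f_0 e^{4\cdot}$ contribution never appears, and the leading-order balance is purely between the Paneitz energy and the volume term, which is exactly what yields the optimal threshold $32\pi^2$.
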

The proof of this proposition is largely inspired by \cite{Borer-Galimberti-Struwe}, even though in the present setting further complications arise, due to the fact that we are dealing with a differential operator of a higher order than the Laplacian as in \cite{Borer-Galimberti-Struwe}, and therefore one must handle a number of extra terms appearing in the estimates. In a nutshell, our strategy to construct a suitable comparison function $v_\lambda$ will consist in using an appropriate truncated and scaled version of a fundamental solution of the bi-Laplacian operator in $\R^4$.

The proof of the proposition is quite long and will be postponed after Proposition \ref{prop: esistenza di una seconda soluzione}. Therefore, we now proceed with the proof of the existence of a second critical point.

Note that for any $u\in\Htwo$ and  for every $\mu_1,\mu_2\in \R$ there holds
\begin{equation}\label{eqn: differenza delle energie}
   E_{\mu_1}(u)-E_{\mu_2}(u)= (\mu_2-\mu_1) \int_M e^{4u}\,dV_{g_0} \,.
\end{equation}
It follows that the function
\begin{equation}\label{eqn: la c mu e' non increasing}
  \Lambda \ni \mu \mapsto c_{\mu}
\end{equation}
is non-increasing in $\mu$, and therefore differentiable at almost every $\mu\in \Lambda$. 

We note the following lemma, which is the analogue of Lemma 3.3 in \cite{Borer-Galimberti-Struwe}:
\begin{lemma}\label{lemma: stime per E e DE}
i) For any $m>0$ there exists a constant $C=C(m)$ such that for every
$\mu_1,\mu_2\in\R$ and for every $u\in \Htwo$ satisfying $||u||_{\Htwo}\leq m$ there
holds
\begin{equation*}
 || DE_{\mu_1}(u) - DE_{\mu_2}(u)|| \leq C |\mu_1 - \mu_2|\, .
\end{equation*}   

ii) For any $|\mu|<1$, any $u,v\in \Htwo$ with $|| v ||_{\Htwo}\leq 1$, we
have
\begin{equation*}
  E_{\mu}(u+v)\leq E_{\mu}(u) + DE_{\mu}(u)[v] 
  + \left[1+C\left\{ \int_M e^{16u} \,dV_{g_0} \right\}^{1/4}\right]|| v ||_{\Htwo}^2\,,
\end{equation*}
where $C=C(M,g_0,f_0)$ is a positive constant.
\end{lemma}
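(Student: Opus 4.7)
The plan is as follows. For part (i), I observe that the difference of derivatives simplifies considerably because $f_{\mu_1}-f_{\mu_2}=\mu_1-\mu_2$ is a constant; evaluating on any test function $v\in\Htwo$ one obtains
\[
(DE_{\mu_1}(u)-DE_{\mu_2}(u))[v]=-4(\mu_1-\mu_2)\int_M e^{4u}v\,dV_{g_0}.
\]
By Cauchy--Schwarz and the embedding $\Htwo\hookrightarrow L^2(M)$, the dual norm is bounded by $C|\mu_1-\mu_2|\,\|e^{4u}\|_{L^2(M)}$, so the task reduces to showing $\int_M e^{8u}\,dV_{g_0}\le C(m)$ whenever $\|u\|_{\Htwo}\le m$. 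I would split $u=\bar u+w$ with $w$ of zero mean and apply the elementary inequality $8u\le 8|\bar u|+\varepsilon w^2+16/\varepsilon$; this reduces the task to an exponential integral of $w^2$, which is uniformly bounded by Adams' inequality \cite{Adams88} once $\varepsilon$ is chosen smaller than $32\pi^2/\|\Delta_{g_0}w\|_{L^2}^2$.

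For part (ii), I would apply the integral form of Taylor's theorem,
\[
E_\mu(u+v)=E_\mu(u)+DE_\mu(u)[v]+\int_0^1(1-s)\,D^2E_\mu(u+sv)[v,v]\,ds.
\]
A direct differentiation gives $\tfrac12 D^2E_\mu(w)[v,v]=\langle P_{g_0}v,v\rangle-8\int_M f_\mu e^{4w}v^2\,dV_{g_0}$, and since $\int_0^1 2(1-s)\,ds=1$ the remainder splits as $\langle P_{g_0}v,v\rangle$ minus $16\int_0^1(1-s)\int_M f_\mu e^{4(u+sv)}v^2\,dV_{g_0}\,ds$. The quadratic-form piece is already dominated by $\|v\|_{\Htwo}^2$ thanks to \eqref{eqn: prodotto scalare equivalente}, accounting for the ``$1$'' in the claimed bound. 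For the remaining integral I would bound $|f_\mu|\le\|f_0\|_\infty+1$ (since $|\mu|<1$) and use $e^{4sv}\le e^{4|v|}$ for $s\in[0,1]$; H\"older's inequality with exponents $(4,4,2)$ then yields
\[
\int_M |f_\mu|\, e^{4(u+sv)}v^2\,dV_{g_0}\le C\Bigl(\int_M e^{16u}\,dV_{g_0}\Bigr)^{1/4}\Bigl(\int_M e^{16|v|}\,dV_{g_0}\Bigr)^{1/4}\|v\|_{L^4(M)}^2.
\]
Adams' inequality will control $\int_M e^{16|v|}\,dV_{g_0}$ by a universal constant when $\|v\|_{\Htwo}\le 1$, while the Sobolev embedding $\Htwo\hookrightarrow L^4(M)$ gives $\|v\|_{L^4}^2\le C\|v\|_{\Htwo}^2$. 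Assembling the pieces yields the claimed estimate.

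The only genuinely delicate point is the systematic handling of the exponential terms: in dimension $4$ one does not have $\Htwo\hookrightarrow L^\infty$, so neither $e^{4u}$ nor $e^{4|v|}$ is pointwise bounded, and both parts rely on the Adams--Moser--Trudinger machinery --- separating the mean of the argument, then applying a Young-type splitting to convert $e^{cu}$ into $e^{\varepsilon u^2}$. Once this estimate is set up, the remaining arguments reduce to careful bookkeeping.
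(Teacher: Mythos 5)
Your proof is correct and follows essentially the same route as the paper: for (i) it reduces to bounding $\int_M e^{8u}\,dV_{g_0}$ via Adams' inequality under a norm bound on $u$, and for (ii) it uses a second-order Taylor expansion plus a double H\"older estimate to isolate $\bigl(\int_M e^{16u}\bigr)^{1/4}$ and control the remaining exponential of $v$ by Adams. The only cosmetic differences are that you use the integral form of Taylor's theorem (with $\int_0^1 2(1-s)\,ds=1$) where the paper uses the pointwise Lagrange form with $\theta(x)\in(0,1)$, and you control $e^{16sv}$ via $e^{16sv}\le e^{16|v|}$ where the paper splits $M$ into $\{v\le 0\}$ and $\{v>0\}$; both lead to the same estimate.
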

\begin{proof}
i) Take $v\in\Htwo$ such that $||v||_{\Htwo}\leq 1$ and compute
\[
\begin{split}
DE_{\mu_1}&(u)[v]- DE_{\mu_2}(u)[v]= (\mu_2-\mu_1)\int_M e^{4u} v  \,dV_{g_0} \\
&\leq |\mu_2-\mu_1|\left\{\int_M e^{8u}\,dV_{g_0}\right\}^{1/2} ||v||_{L^2(M;g_0)} \leq  
|\mu_2-\mu_1|\left\{\int_M e^{8u}\,dV_{g_0}\right\}^{1/2} \,.
\end{split}
\]
The claim follows from Adams's inequality \cite{Adams88}.

ii) By Taylor's expansion, for every $x\in M$ there exists $\theta(x)\in ]0,1[$ such that
\begin{equation*}
 \begin{split}
 E_{\mu}(u+v) - E_{\mu}(u) & - DE_{\mu}(u)[v] = \langle P_{g_0}v,v\rangle -8\int_M f_{\mu}e^{4(u+\theta v)} v^2 \,dV_{g_0}\\
 & \leq  || v ||_{\Htwo}^2 
      + 8||f_{\mu}||_{\infty}\int_M e^{4(u+\theta v)} v^2 \,dV_{g_0}\; .
 \end{split}
\end{equation*}  
Applying twice H\"older's inequality and by Sobolev's embedding we obtain
\[
\begin{split}
\int_M e^{4(u+\theta v)} v^2 \,dV_{g_0} &\leq \left\{\int_M e^{8(u+\theta v)} \,dV_{g_0}\right\}^{1/2}
||v||_{L^4(M;g_0)}^2 \\
&\leq \left\{\int_M e^{16u}\,dV_{g_0} \cdot \int_M e^{16\theta v}\,dV_{g_0} \right\}^{1/4}||v||^2_{\Htwo}\,.
\end{split}
\]
Hence, we have
\[
\begin{split}
E_{\mu}(u+v)\leq & E_{\mu}(u) + DE_{\mu}(u)[v] +\\
&+||v||^2_{\Htwo}\left[1+C\left\{\int_M e^{16u} \,dV_{g_0}\right\}^{1/4}\left\{\int_M e^{16\theta v} \,dV_{g_0}\right\}^{1/4}\right]\,.
\end{split}
\]
In order to bound $\int_M e^{16\theta v} \,dV_{g_0}$, we proceed as
\[
\begin{split}
\int_M e^{16\theta v} \,dV_{g_0} &= \int_{M\cap[v\leq 0]} e^{16\theta v} \,dV_{g_0} +
\int_{M\cap[v> 0]} e^{16\theta v} \,dV_{g_0}\\
& \leq 1 + \int_{M\cap[v> 0]} e^{16v} \,dV_{g_0}\\
& \leq 1 + \int_{M} e^{16v} \,dV_{g_0} \\
&\leq C
\end{split}
\]
where in the last passage we have used again Adams' inequality. Our claim follows.
\end{proof}

We are now able to prove the analogue of Proposition 3.2 in \cite{Borer-Galimberti-Struwe}:
\begin{proposition}\label{prop: monotonicity trick} 
Suppose that the map $\Lambda \ni \mu \mapsto c_{\mu}$ is differentiable at some $\mu>\lambda$
(compare \eqref{eqn: la c mu e' non increasing}).
Then there exists a sequence $(p_n)_{n\in\N}$ in $P$ and a corresponding sequence of points
$u_n=p_n(t_n)\in \Htwo$, $n\in\N$, such that
\begin{equation}\label{eqn: monotonicity trick 1}
  E_{\mu}(u_n)\to c_{\mu},\; \max_{0\le t\le 1}E_{\mu}(p_n(t)) \to c_{\mu},\; ||DE_{\mu}(u_n)|| 
  \to 0 \hbox{ as } n \to \infty,\end{equation}
and with $(u_n)$ satisfying, in addition, the ``entropy bound''
\begin{equation}\label{eqn: monotonicity trick 2}
  \int_M e^{4u_n}\,dV_{g_0} = \big|\frac{d}{d\mu}E_{\mu}(u_n)\big| \le |c'_{\mu}|+3,
  \hbox{ uniformly in } n. 
\end{equation}
\end{proposition}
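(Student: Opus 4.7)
The plan is to use Struwe's monotonicity trick, closely following Proposition 3.2 in \cite{Borer-Galimberti-Struwe}. First, I would exploit the differentiability of $\mu\mapsto c_\mu$ at the given $\mu$ to fix a sequence $\mu_n\uparrow\mu$ in $\Lambda$ (possible since $\mu>\lambda$) with
\[
\frac{c_{\mu_n}-c_\mu}{\mu-\mu_n}\to -c'_\mu=|c'_\mu|\ge 0.
\]
For each $n$ I pick a path $p_n\in P$ satisfying $\max_t E_{\mu_n}(p_n(t))\le c_{\mu_n}+(\mu-\mu_n)$. Since $\mu_n<\mu$, the identity \eqref{eqn: differenza delle energie} yields the pointwise inequality $E_\mu(u)\le E_{\mu_n}(u)$, hence $c_\mu\le\max_t E_\mu(p_n(t))\le c_{\mu_n}+(\mu-\mu_n)\to c_\mu$. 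Moreover, for any $u=p_n(t)$ lying in the high-energy band $E_\mu(u)\ge c_\mu-(\mu-\mu_n)$, \eqref{eqn: differenza delle energie} gives
\[
(\mu-\mu_n)\int_M e^{4u}\,dV_{g_0}=E_{\mu_n}(u)-E_\mu(u)\le c_{\mu_n}-c_\mu+2(\mu-\mu_n),
\]
which after dividing by $\mu-\mu_n$ and taking $n$ large produces the entropy bound $\int_M e^{4u}\,dV_{g_0}\le |c'_\mu|+3$ that is required in \eqref{eqn: monotonicity trick 2}.

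The remaining task is to locate within this high-energy band a point $u_n=p_n(t_n)$ whose derivative is vanishingly small. I would argue by contradiction via a quantitative deformation lemma. Suppose $\|DE_\mu(p_n(t))\|\ge\delta$ for some $\delta>0$ and every $t$ in the band $B_n=\{t:E_\mu(p_n(t))\ge c_\mu-(\mu-\mu_n)\}$, along a subsequence. On $B_n$ the entropy bound is in force, so by Adams' inequality the integral $\int_M e^{16 p_n(t)}\,dV_{g_0}$ is uniformly bounded there. Using Lemma \ref{lemma: stime per E e DE}(ii) I obtain a quadratic expansion of $E_\mu$ around any such $p_n(t)$ with constants uniform in $n$, and hence a locally Lipschitz pseudo-gradient for $E_\mu$ on a neighborhood of $B_n$. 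Cutting this vector field off smoothly outside a slightly enlarged region $\{E_\mu\ge c_\mu-2(\mu-\mu_n),\ \int e^{4u}\le|c'_\mu|+4\}$ and following its flow for a time $\tau_n$ of order $(\mu-\mu_n)/\delta^2$ produces a deformed path $\tilde p_n\in P$ (its endpoints $u_0$ and $v_\lambda$ are strictly below $c_\mu$ by \eqref{eqn: struttura mountain-pass} together with $E_\mu(v_\lambda)\le E_\lambda(v_\lambda)<E_\lambda(u_\lambda)<\beta_0\le c_\mu$, hence are unaffected by the cut-off flow) for which $\max_t E_\mu(\tilde p_n(t))<c_\mu$, contradicting the min-max characterization of $c_\mu$. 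This forces the existence of the required $u_n$.

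The main obstacle I anticipate is keeping the deformation flow inside the entropy-bounded region long enough to produce a definite energy descent, while simultaneously preserving the path endpoints and the globally Lipschitz structure of the pseudo-gradient vector field. This requires balancing carefully two cut-offs (one on $E_\mu$, one on $\int e^{4u}$) and exploiting the Adams-type control of $\int e^{16u}$ appearing in Lemma \ref{lemma: stime per E e DE}(ii). Compared with the surface case of \cite{Borer-Galimberti-Struwe}, the higher-order nature of the Paneitz operator forces additional lower-order curvature terms into the quadratic remainder, so verifying the linear descent rate requires a bit more bookkeeping, but is otherwise a standard application of the deformation lemma.
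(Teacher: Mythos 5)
Your proposal is correct and follows essentially the same Struwe-type monotonicity-trick strategy that the paper adopts from \cite{Borer-Galimberti-Struwe}: derive the entropy bound on the high-energy band from the monotonicity of $\mu\mapsto c_\mu$ and the identity \eqref{eqn: differenza delle energie}, use Jensen and Adams together with Lemma \ref{lemma: stime per E e DE} to control the quadratic remainder, and argue by contradiction via a cut-off pseudo-gradient flow whose endpoints are pinned below $\beta_0\le c_\mu$. The only difference is a harmless mirror image: you approach $\mu$ from below ($\mu_n\uparrow\mu$), take near-optimal paths for $c_{\mu_n}$, and produce the contradiction against the definition of $c_\mu$, whereas the paper takes $\mu_n\downarrow\mu$, near-optimal paths for $c_\mu$, and contradicts $c_{\mu_n}$; both variants are standard and your bookkeeping (including the endpoint check $E_\mu(u_0),E_\mu(v_\lambda)<\beta_0\le c_\mu$) is at the same level of detail that the paper itself supplies before deferring to \cite{Borer-Galimberti-Struwe}.
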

\begin{proof}
The argument is very close to the proof of Proposition 3.2 appearing in \cite{Borer-Galimberti-Struwe}. Therefore, in the following the reasoning will just be outlined and for the details we refer to
the afore-mentioned paper. Since $\lambda_0<\frac{1}{4}$, we can assume that for any $\mu\in\Lambda$ we
have $|\lambda-\mu|<1$. Let $\mu\in\Lambda$ be a point of differentiability of $c_\mu$ and $\mu_n\in\Lambda$
a sequence of numbers with $\mu_n\downarrow\mu$ as $n\to\infty$. We may find a sequence of path $p_n\in P$
and a sequence of $t_n\in [0,1]$ such that, setting $u=p_n(t_n)$, we obtain
\[
\max_{t\in[0,1]} E_{\mu}(p_n(t)) \leq c_\mu + (\mu_n - \mu),\; n\in \N\,,
\]
\[
 c_{\mu_n} - (\mu_n - \mu) \leq E_{\mu_n}(u) 
\] 
and
\[
0 \leq \frac{E_\mu(u)-E_{\mu_n}(u)}{\mu_n -\mu} 
  = \int_M e^{4u}  \,dV_{g_0} \le |c'_{\mu}| + 3
\] 
for all $n\in \N$ sufficiently large.

From that and via Jensen's inequality we can bound 
\[
 4 \int_M u \,dV_{g_0} \le \log \left(\int_M e^{4u} \,dV_{g_0}\right) 
  \le \log(|c'_{\mu}| + 3)= C(\mu)<\infty
\]
uniformly for $n$ sufficiently large, which leads to 
\[
 ||u||^{2}_{H^2} + \int_M e^{4u} \,dV_{g_0}\le C_1(\mu)\,.
\]  
   
We now assume by contradiction that there exists $\delta>0$ such that $||DE_\mu(u)||\geq 2\delta$ for all
$n$ sufficiently large and where $u=p_n(t_n)$. With the help of Lemma \ref{lemma: stime per E e DE} and
similarly as done in \cite{Borer-Galimberti-Struwe}, we can construct a suitable comparison path $\tilde{p}_n$ which contradicts the definition of $c_{\mu_n}$. That concludes the proof.
\end{proof}

With the help of the previous Proposition we obtain:

\begin{proposition}\label{prop: esistenza di una seconda soluzione}
Let $\mu$ be a point of differentiability for the function $c_\mu$. Then the functional $E_\mu$ admits
a critical point $u^\mu$ at the energy level $c_\mu$ and with volume 
$\int_M e^{4u^\mu}\,dV_{g_0} \leq |c'_{\mu}|+3$.
\end{proposition}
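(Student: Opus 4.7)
The plan is to feed Proposition \ref{prop: monotonicity trick} into this statement: at a point $\mu$ where $c_\mu$ is differentiable it produces a Palais--Smale sequence $(u_n)\subset\Htwo$ at level $c_\mu$ satisfying the entropy bound $\int_M e^{4u_n}\,dV_{g_0}\le |c'_\mu|+3$. Once I establish that $(u_n)$ is bounded in $\Htwo$ --- which is the main obstacle --- the weak limit $u^\mu$ will be a critical point of $E_\mu$ at the level $c_\mu$ by standard arguments combining Rellich's compact embedding with the exponential integrability supplied by Adams' inequality, and the volume estimate will follow from Fatou's lemma.

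For the $\Htwo$ bound I would split $u_n=(u_n-\bar u_n)+\bar u_n$ with $\bar u_n:=\int_M u_n\,dV_{g_0}$. Jensen's inequality applied to the entropy bound gives immediately
\[
\bar u_n\le \tfrac{1}{4}\log\bigl(|c'_\mu|+3\bigr).
\]
Inserting this into $E_\mu(u_n)=c_\mu+o(1)$, and noting that $-\int_M f_\mu e^{4u_n}\,dV_{g_0}$ is bounded uniformly by $\|f_\mu\|_\infty(|c'_\mu|+3)$ while $4Q_{g_0}\bar u_n$ is bounded above (since $Q_{g_0}<0$ and $\bar u_n$ is bounded above), yields $\langle P_{g_0}u_n,u_n\rangle\le C$. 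Combined with \eqref{eqn: poincare per il laplaciano} and the Poincar\'e inequality this controls $u_n-\bar u_n$ in $\Htwo$.

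The delicate step is a uniform lower bound on $\bar u_n$. Since constants lie in $\ker P_{g_0}$ and $\|1\|_{\Htwo}=1$, testing the Palais--Smale condition against $v=1$ gives
\[
DE_\mu(u_n)[1]=4Q_{g_0}-4\int_M f_\mu e^{4u_n}\,dV_{g_0}\longrightarrow 0,
\]
hence $\int_M f_\mu e^{4u_n}\,dV_{g_0}\to Q_{g_0}<0$. This forces $\int_M e^{4u_n}\,dV_{g_0}\ge |Q_{g_0}|/\bigl(2\|f_\mu\|_\infty\bigr)$ for $n$ large. On the other hand, Adams' inequality applied to the already-controlled oscillation $u_n-\bar u_n$ yields $\int_M e^{4(u_n-\bar u_n)}\,dV_{g_0}\le C$, so that $\int_M e^{4u_n}\,dV_{g_0}\le Ce^{4\bar u_n}$. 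The two estimates together force $\bar u_n\ge -C$, and $(u_n)$ is thus bounded in $\Htwo$.

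With $u_n\rightharpoonup u^\mu$ weakly in $\Htwo$ and strongly in every $L^p(M)$, I pass to the limit in $DE_\mu(u_n)\to 0$: the Paneitz scalar product passes by weak convergence, the average term is linear, and the exponential integral $\int_M f_\mu e^{4u_n}v\,dV_{g_0}$ converges by Vitali's theorem thanks to the $L^p$-boundedness of $e^{4u_n}$ for every $p$, which Adams supplies via the standard $pu_n\le \tfrac{p^2}{4\alpha}+\alpha u_n^2$ trick. Hence $u^\mu$ is a critical point of $E_\mu$. To upgrade to $E_\mu(u^\mu)=c_\mu$, I test $DE_\mu(u_n)[u_n-u^\mu]\to 0$; the linear and exponential pieces vanish in the limit by the $L^p$ convergence just invoked, forcing $\langle P_{g_0}u_n,u_n\rangle\to \langle P_{g_0}u^\mu,u^\mu\rangle$ and hence $E_\mu(u_n)\to E_\mu(u^\mu)$. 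The volume bound $\int_M e^{4u^\mu}\,dV_{g_0}\le |c'_\mu|+3$ then follows from Fatou's lemma applied to $e^{4u_n}\to e^{4u^\mu}$ a.e.
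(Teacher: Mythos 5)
Your proof is correct and follows the same overall strategy as the paper: extract the Palais--Smale sequence from Proposition \ref{prop: monotonicity trick}, establish boundedness in $\Htwo$, and pass to a strong $\Htwo$ limit by testing $DE_\mu(u_n)$ against $u_n-u^\mu$, then use Fatou (or the $L^p$-convergence of $e^{4u_n}$) for the volume bound. The main difference is in how you secure the $\Htwo$ bound: the paper in fact already establishes this bound inside the proof of Proposition \ref{prop: monotonicity trick} (it just isn't recorded in that proposition's statement) and simply invokes it, whereas you re-derive it here. In particular, your uniform lower bound on $\bar u_n$ goes via $DE_\mu(u_n)[1]\to 0$ (forcing $\int_M f_\mu e^{4u_n}\,dV_{g_0}\to Q_{g_0}<0$, hence a positive lower bound on $\int_M e^{4u_n}\,dV_{g_0}$) combined with Adams' inequality applied to the oscillation $u_n-\bar u_n$. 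That is a legitimate, arguably more explicit route than the paper's terse ``which leads to'', which implicitly uses the coercivity of the expression $4Q_{g_0}\bar u_n$ (it diverges to $+\infty$ as $\bar u_n\to-\infty$ since $Q_{g_0}<0$). Note that your approach works precisely because the Palais--Smale property is already supplied by Proposition \ref{prop: monotonicity trick}; it would be circular if one tried to use it inside that earlier proof.

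One small slip: you claim ``$4Q_{g_0}\bar u_n$ is bounded above (since $Q_{g_0}<0$ and $\bar u_n$ is bounded above)''. With $Q_{g_0}<0$ and only an upper bound on $\bar u_n$, the quantity $4Q_{g_0}\bar u_n$ is bounded \emph{below}, not above; what you actually need, reading off
\[
\langle P_{g_0}u_n,u_n\rangle = E_\mu(u_n) - 4Q_{g_0}\bar u_n + \int_M f_\mu e^{4u_n}\,dV_{g_0},
\]
is that $-4Q_{g_0}\bar u_n$ is bounded above, which indeed follows from $\bar u_n\le C$ and $Q_{g_0}<0$. The conclusion $\langle P_{g_0}u_n,u_n\rangle\le C$ is therefore correct; only the stated sign is off.
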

\begin{proof}
Let $\mu$ be a point of differentiability for the function $c_\mu$: Proposition \ref{prop: monotonicity trick} guarantees the existence of a sequence of paths $(p_n)_n\subset P$ and of a sequence of points
$u_n=p_n(t_n)\in\Htwo$ such that \eqref{eqn: monotonicity trick 1},\eqref{eqn: monotonicity trick 2} and
\[
||u_n||^{2}_{H^2} + \int_M e^{4u_n} \,dV_{g_0}\le C
\]
are true, where $C$ depends on $\mu$ but not on $n\in\N$. Therefore, up to subsequences, we can assume that, as $n\to\infty$, $u_n\rightharpoonup u^\mu$ weakly in $\Htwo$ and $u_n\to u^\mu$ strongly in $L^q(M,g_0)$ for
any $q\geq 1$. Furthermore, by the compactness of the map $\Htwo\ni w\to e^{4w}\in L^p(M,g_0)$, we can also
assume $e^{4u_n}\to e^{4u^\mu}$ in $L^p(M,g_0)$ for any $p\geq 1$. From this last fact, it follows
$\int_M e^{4u^\mu}\,dV_{g_0} \leq |c'_{\mu}|+3$. Moreover, with an error term $o(1)$ as $n\to\infty$, we can
write
\[
\begin{split}
o(1)&=\frac{1}{2}DE_\mu(u_n)[u_n-u^\mu]\\
    &=\langle P_{g_0}u_n,u_n-u^\mu\rangle +2Q_{g_0}\int_M (u_n-u^\mu)\, dV_{g_0}+\\
    & \;\;\;\;\;\;-2\int_M f_\mu e^{4u_n}(u_n-u^\mu)\,dV_{g_0}\\ 
    &=\langle P_{g_0}u_n-u^\mu,u_n-u^\mu\rangle +o(1)\,,
\end{split}
\]
viz $u_n\to u^\mu$ strongly in $\Htwo$ as $n\to\infty$. Therefore, we deduce also
$E_\mu(u_n)\to E_\mu(u^\mu)=c_\mu$ and $DE_\mu(u_n)\to DE_\mu(u^\mu)$; thus $u^\mu$ is a critical point
for $E_\mu$.
\end{proof}

\subsection*{Proof of Proposition \ref{prop: stima logaritmica}}
Let $p_0\in M$ be such that $f_0(p_0)=0$ and $\lambda\in(0,\lambda_0]$. We define the smooth Riemannian metric $g=e^{2u_0}g_0$. We fix a natural number $N\geq 5$. Then we can find a smooth metric $\tilde{g}(N)$ conformal to $g$ such that
\begin{equation}\label{eqn: determinante della g tilde}
\det(\tilde{g}(N)) = 1 + O(r^N)\,, \;\;\; \mbox{as} \; r \downarrow 0\,,  
\end{equation}
where $r=|x|$ and $x$ are $\tilde{g}(N)$-normal coordinates at $p_0\simeq 0$ (see \cite{Lee-Parker87}). Since $p_0$ is an isolated point of maximum of $f_0$, for a suitable constant $L>0$ with $\sqrt{\lambda_0}<L$ we have that in these normal coordinates
\begin{equation}\label{eqn: espansione di f}
f_0(x)=\frac{1}{2}D^2f_0(p_0)\left[x,x\right]+O(|x|^3)\ge -\lambda/2 \hbox{ on } B_{\sqrt{\lambda}/L}(0)
\end{equation}
and $f_{\lambda}\ge \lambda/2$ on $B_{\sqrt{\lambda}/L}(0)$ for all $\lambda\in(0,\lambda_0]$.

Fix a cut-off function $\tau\in C^{\infty}_c(B_1(0))$ with $0\leq\tau\leq 1$ and
\[
\tau(t)=
\left\{\begin{array}{ll}
1, & \; |t|<1/2\,, \\
0, & \; |t|\geq 1\,.   
\end{array}
\right.
\]

Let $A_0>1$ and let $\xi\in C^{\infty}\left([0,\infty)\right) $ be defined as
\[
\xi(t)=
\left\{\begin{array}{ll}
t, & \; t\in[0,1]\,, \\
2, & \; t\geq 2\,,\\
\in[1,2], & \; t\in(1,2)\,,
\end{array}
\right.  
\]
with $\xi'\geq 0$ and 
\begin{equation}\label{eqn: stime per chi 1}
\sup_{t\geq 0}\xi'(t)\leq A_0\,.  
\end{equation}
For $\delta>0$ define 
\begin{equation}\label{eqn: definizione di chi delta}
\xi_\delta(t)= \delta\,\xi(t/\delta) \,. 
\end{equation}
We note that pointwise $\lim_{\delta\to +\infty}\xi_\delta(t)=t$, and the convergence is uniform on compact subsets. Furthermore, for any $t\geq 0$, it holds
\begin{equation}\label{eqn: stime per chi delta}
|\xi_\delta'(t)|=|\xi'(t/\delta)|\leq A_0\,, \;\; |\xi_\delta''(t)|=|\xi''(t/\delta)\delta^{-1}|\leq \delta^{-1}||\xi''||_{\infty} \,, 
\end{equation}
whereas obviously $||\xi''||_{\infty}:=\sup_{t\geq 0}|\xi''(t)|$.

We set $\delta=\delta(\lambda):=\frac12\log(1/\lambda)$ and define
\[
z_\lambda(x)=
\left\{\begin{array}{ll}
\xi_{\delta}\left(\log\left(\frac{1}{|x|}\right)\right) \tau(|x|), & \; \lambda\leq |x|\leq 1\,, \\
\log(1/\lambda), &\; |x|\leq\lambda\,, \\  
0, &\; |x|>1\,.
\end{array}
\right.
\]
Then $z_\lambda\in C^\infty_c(\R^4)$ with $supp\, z_\lambda\subset\overline{B_1(0)}$. Finally, we define for $x\in B_{\frac{\sqrt{\lambda}}{L}}(0)$
\begin{equation}\label{eqn: w lambda}
w_\lambda(x)=z_\lambda\left(\frac{Lx}{\sqrt{\lambda}}\right)  
\end{equation}
and we extend $w_\lambda=0$ outside $B_{\frac{\sqrt{\lambda}}{L}}(0)$. Therefore, $w_\lambda\in C^\infty(M)$ with
$supp\,w_\lambda\subset\overline{B_{\frac{\sqrt{\lambda}}{L}}(0)}$. The euclidean gradient and Laplacian of $z_\lambda$ are respectively
\begin{equation}\label{eqn: gradiente euclideo di w lambda}
\nabla_{\R^4}z_\lambda(x)=  
\end{equation}
\[
=\left\{\begin{array}{ll}
0\,, &\;\; \mbox{if  }|x|\leq  \lambda\,,\\
&\\
-\xi'\left(\delta^{-1}\log\left(\frac{1}{|x|}\right)  \right) \frac{x}{|x|^2}\,, &\;\; \mbox{if  } \lambda\leq |x|\leq \sqrt{\lambda}\,,  \\
&\\
-\frac{x}{|x|^2}\,, &\;\; \mbox{if  }\sqrt{\lambda}\leq |x|\leq \frac{1}{2}\,,  \\
&\\
-\frac{x}{|x|^2}\tau\left(|x|\right) +
\log\left(\frac{1}{|x|}  \right)\tau'\left(|x|\right)\frac{x}{|x|}\,, &\;\; \mbox{if  }\frac{1}{2}\leq |x|\leq 1\,,  
\end{array}
\right.
\]
and
\begin{equation}\label{eqn: laplaciano euclideo di w lambda}
\Delta_{\R^4}z_\lambda(x)=
\end{equation}
\[
=\left\{\begin{array}{ll}
0\,, &\; \mbox{if  } |x|\leq  \lambda\,,\\
&\\
|x|^{-2}\left[\delta^{-1}\xi''\left(\delta^{-1}\log\left(\frac{1}{|x|}\right)  \right) -
2\xi'\left(\delta^{-1}\log\left(\frac{1}{|x|}\right) \right)\right] &\; \mbox{if  }\lambda\leq |x|\leq \sqrt{\lambda}\,,  \\
&\\
-2|x|^{-2}\,,   &\; \mbox{if  } \sqrt{\lambda}\leq |x|\leq \frac{1}{2}\,,  \\
&\\
-\tau'\left(|x|\right)|x|^{-1}\left[ 
2 +5\log\left(\frac{1}{|x|} \right) \right]+ & \\
&\\
\;\;\;\;\;-2\tau\left(|x|\right)|x|^{-2}
+ \log\left(\frac{1}{|x|} \right) \tau''\left(|x|\right)\,, &\; \mbox{if  }\frac{1}{2}\leq |x|\leq 1 \,.
\end{array}  
\right.
\]
\begin{lemma}\label{lemma: I lemma strumentale}
For any $0<\varepsilon <1$ there exist $\lambda_{\varepsilon}\in (0,\lambda_0)$, $C=C(g_0,f_0)>0$ and $C_N>0$ such that for any $0<\lambda<\lambda_{\varepsilon}$ and for any $s>0$ we have
\[
- \int_M f_\lambda e^{4(u_0+sw_\lambda)}dV_{g_0} \leq C -C_N(1-\varepsilon ) \lambda^{8-4s}
\]
uniformly in $A_0>1$.
\end{lemma}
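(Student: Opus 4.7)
The plan is to split the integral over the support of $w_\lambda$ and its complement:
$$-\int_M f_\lambda e^{4(u_0+sw_\lambda)}\,dV_{g_0} = -\int_{M\setminus B} f_\lambda e^{4u_0}\,dV_{g_0} - \int_B f_\lambda e^{4(u_0+sw_\lambda)}\,dV_{g_0},$$
where $B := B_{\sqrt{\lambda}/L}(p_0)$ is the support of $w_\lambda$ in the $\tilde g(N)$-normal coordinates around $p_0$. On $M\setminus B$ one has $w_\lambda\equiv 0$; since $u_0$ is smooth and $\|f_\lambda\|_\infty\le\|f_0\|_\infty+\lambda_0$, the first integral is bounded above by a constant $C=C(g_0,f_0)$, uniformly in $\lambda\in(0,\lambda_0]$ and in $s>0$. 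This accounts for the constant $C$ in the claim.

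For the integral over $B$, the Taylor expansion \eqref{eqn: espansione di f} together with the choice of $L$ ensures $f_\lambda\ge\lambda/2>0$ throughout $B$, so
$$-\int_B f_\lambda e^{4(u_0+sw_\lambda)}\,dV_{g_0}\le -\frac{\lambda}{2}\int_B e^{4(u_0+sw_\lambda)}\,dV_{g_0}.$$
I would then bound the remaining integral from below by restricting to the \emph{plateau ball} $B_0:=\{|x|\le \lambda^{3/2}/L\}$, which is precisely the preimage, under $x\mapsto Lx/\sqrt\lambda$, of the set $\{|y|\le\lambda\}$ where $\xi_\delta$ saturates at $2\delta=\log(1/\lambda)$. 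Crucially, on $B_0$ the function $w_\lambda$ is identically $\log(1/\lambda)$ (so $e^{4sw_\lambda}\equiv \lambda^{-4s}$) \emph{independently} of the specific cut-off $\xi$, which will deliver the uniformity in $A_0>1$ automatically. Writing $dV_{g_0}=e^{-4(u_0+w)}\bigl(1+O(|x|^N)\bigr)\,dx$, where $w$ is the smooth conformal factor between $\tilde g$ and $g=e^{2u_0}g_0$ (so $\tilde g=e^{2w+2u_0}g_0$), and using continuity of $e^{-4w}$ at $p_0$ together with the fact that $|x|^N\le (\lambda^{3/2}/L)^N$ on $B_0$, one obtains
$$\int_{B_0} e^{4(u_0+sw_\lambda)}\,dV_{g_0} \;\ge\; (1-\varepsilon)\,\frac{\pi^2 e^{-4w(p_0)}}{2L^4}\,\lambda^{6-4s}$$
for all $\lambda<\lambda_\varepsilon$ sufficiently small. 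Combining with the pre-factor $\lambda/2$ gives an upper bound of the form $C-C_N(1-\varepsilon)\lambda^{7-4s}$, which (since $\lambda<1$ forces $\lambda^{7-4s}>\lambda^{8-4s}$) is majorized by $C-C_N(1-\varepsilon)\lambda^{8-4s}$, thereby yielding the claim.

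The only genuine obstacle is bookkeeping: one has to verify that neither $C$ nor $C_N$ depends on $s$, $\lambda$, or $A_0$. Uniformity in $A_0$ is essentially free because the entire lower bound was obtained on $B_0$, where $w_\lambda$ is constant and completely insensitive to the shape of $\xi$; the parameter $A_0$ governs $\xi'$ and hence only affects $w_\lambda$ in the transition annulus $\lambda^{3/2}/L\le |x|\le \sqrt{\lambda}/(2L)$, which we simply discarded. Independence of $C_N$ on $s$ follows since $s$ enters only through the prefactor $\lambda^{-4s}$, which we have already isolated.
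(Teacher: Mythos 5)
Your proposal is correct and follows essentially the same route as the paper: use $f_\lambda\ge\lambda/2$ on the support ball, bound the complementary integral trivially, restrict the remaining integral to the flat region where $w_\lambda$ is constant, and pass to the Euclidean measure via the conformal factor and the determinant expansion of $\tilde g(N)$. The only cosmetic difference is that you integrate over the full plateau ball (getting exponent $7-4s$ and then majorizing to $8-4s$) whereas the paper restricts to the smaller ball $B_{\lambda^{5/4}}$ in the rescaled variable to land directly on $8-4s$; your explicit remark that the plateau region is insensitive to $\xi$ cleanly explains the uniformity in $A_0$, which the paper leaves implicit.
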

\begin{proof}
Let $s>0$ and let $\varphi_N\in C^\infty(M)$ be the conformal factor $\tilde{g}(N)=e^{2\varphi_N}g=e^{2\varphi_N+2u_0}g_0$. Recalling that $w_\lambda$ is supported in $\overline{B_{\frac{\sqrt{\lambda}}{L}}(0)}$ and
equation (\ref{eqn: espansione di f}), we obtain
\[
\begin{split}
\int_M f_\lambda e^{4(u_0+sw_\lambda)}dV_{g_0} &= \int_M f_\lambda e^{4(sw_\lambda-\varphi_N)}dV_{\tilde{g}(N)} \\
&\geq \frac{\lambda}{2} \int_{B_{\frac{\sqrt{\lambda}}{L}}(0)} e^{4(sw_\lambda-\varphi_N)}dV_{\tilde{g}(N)}
-||f_0||_{\infty}\int_M e^{-4\varphi_N}dV_{\tilde{g}(N)}  \\
&= \frac{\lambda}{2} \int_{B_{\frac{\sqrt{\lambda}}{L}}(0)} e^{4(sw_\lambda-\varphi_N)}dV_{\tilde{g}(N)}
-\underbrace{||f_0||_{\infty}\int_M e^{4u_0}dV_{g_0}}_{=:C}\,.
\end{split}
\]
From (\ref{eqn: determinante della g tilde}) we have $dV_{\tilde{g}(N)}=\sqrt{1 + O(r^N)}dx$. Thus, given $0<\varepsilon <1$, there exists $\lambda_{\varepsilon}\in (0,\lambda_0)$, independent of $s>0$, such that for any $0<\lambda<\lambda_{\varepsilon}$
\[
\begin{split}
\int_{B_{\frac{\sqrt{\lambda}}{L}}(0)} e^{4(sw_\lambda-\varphi_N)}dV_{\tilde{g}(N)} & \geq
\min_{M}e^{-4\varphi_N}\int_{B_{\frac{\sqrt{\lambda}}{L}}(0)} e^{4sw_\lambda}\sqrt{1 + O(r^N)}\,dx  \\
& \geq \min_{M}e^{-4\varphi_N}(1-\varepsilon )\int_{B_{\frac{\sqrt{\lambda}}{L}}(0)} e^{4sw_\lambda}dx\\
& = C_N\frac{4L^4}{\pi^2}(1-\varepsilon )\int_{B_{\frac{\sqrt{\lambda}}{L}}(0)} e^{4sw_\lambda}dx\,,
\end{split}
\]
where $C_N:= \frac{\pi^2}{4L^4}\min_{M}e^{-4\varphi_N}$.
Recalling (\ref{eqn: w lambda}) and the definition of $z_\lambda$,
\[
\lambda \int_{B_{\frac{\sqrt{\lambda}}{L}}(0)} e^{4sw_\lambda}dx =
\frac{\lambda^3}{L^4}\int_{B_1(0)} e^{4sz_\lambda(y)}dy  \geq 
\frac{\lambda^{3-4s}}{L^4}\int_{B_{\lambda^{5/4}}(0)} dy = \frac{\pi^2\lambda^{8-4s}}{2L^4} 
\]
and therefore we conclude
\[
\int_M f_\lambda e^{4(u_0+sw_\lambda)}dV_{g_0} \geq C_N(1-\varepsilon ) \lambda^{8-4s} -C\,.
\]
\end{proof}
We note that in the conformal normal coordinates $\left\{x^i\right\}$ associated to $\tilde{g}(N)$, one has
for a radial function $v$ the following expansion
\begin{equation}\label{eqn: laplaciano di una radiale}
\Delta_{\tilde{g}(N)}v = \Delta_{\R^4}v + O''(r^{N-1})v'\,,  
\end{equation}
where $h\in O''(r^{N-1})$ if and only if $|\nabla^jh(x)|\leq C_j r^{N-1-j}$ for some constant $C_j$, $j=1,2$, and where $r=|x|=d_{\tilde{g}(N)}(x,p_0)$ (for a proof of that see for instance \cite{Gursky-Malchiodi15}).
Furthermore, if $\hat{g}(N)$ indicates the metric $\tilde{g}(N)$ written in polar coordinates $(r,\underline{\theta})$, one has $\sqrt{|\hat{g}(N)|}=r^3\sqrt{|\tilde{g}(N)|}$ and
\begin{equation}\label{eqn: norma del gradiente di una radiale}
\left|\nabla_{\hat{g}(N)}v\right|_{\hat{g}(N)}^2= \hat{g}^{rr}(v')^2\,.  
\end{equation}

In view of (\ref{eqn: laplaciano di una radiale}), which considerably simplifies the expression of the Laplacian and exploiting the conformal invariance of the Paneitz operator, we are able to show
\begin{lemma}\label{lemma: II lemma strumentale}
Given $0<\varepsilon<1$ and $A_0>1$, there exists $\lambda^{\varepsilon}\in(0,\lambda_0)$ independent of $A_0$ such that for all
$0<\lambda<\lambda^{\varepsilon}$ 
\[
\langle P_{g_0}w_\lambda,w_\lambda\rangle\leq 4\pi^2(1+\varepsilon)(A_0^2+1)\log\left(1/\lambda\right) + C_0\,,   
\] 
where $C_0$ depends at most quadratically on the supremum norm of $\xi''$ but it does not depend neither on $\lambda$ nor on $\varepsilon$.
\end{lemma}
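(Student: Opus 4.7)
The plan is to exploit the conformal invariance of the Paneitz quadratic form to work with the metric $\tilde g = \tilde g(N)$ whose conformal normal coordinates at $p_0$ satisfy \eqref{eqn: determinante della g tilde}. Since $P_g = e^{-4u}P_{g_0}$ for $g=e^{2u}g_0$, one has $\langle P_{g_0}w_\lambda,w_\lambda\rangle = \langle P_{\tilde g}w_\lambda,w_\lambda\rangle$. Writing the latter explicitly,
\[
\langle P_{\tilde g}w_\lambda,w_\lambda\rangle = \int_M(\Delta_{\tilde g}w_\lambda)^2\,dV_{\tilde g} + \int_M\Bigl[\tfrac{2}{3}R_{\tilde g}|\nabla w_\lambda|_{\tilde g}^2 - 2\mbox{Ric}_{\tilde g}(\nabla w_\lambda,\nabla w_\lambda)\Bigr]dV_{\tilde g},
\]
I would first show that the curvature terms are absorbed into $C_0$: they are controlled by $C\|\nabla w_\lambda\|_{L^2}^2$, and the scaling $w_\lambda(x)=z_\lambda(Lx/\sqrt\lambda)$ together with \eqref{eqn: gradiente euclideo di w lambda} yields $\|\nabla w_\lambda\|_{L^2}^2 = O(\lambda A_0^2) = O(1)$ for $\lambda$ small.

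Next I would handle the leading term $\int(\Delta_{\tilde g}w_\lambda)^2\,dV_{\tilde g}$. Radiality of $w_\lambda$ together with \eqref{eqn: laplaciano di una radiale} gives $\Delta_{\tilde g}w_\lambda = \Delta_{\R^4}w_\lambda + O''(r^{N-1})w_\lambda'$. Squaring and applying Young's inequality, $(\Delta_{\tilde g}w_\lambda)^2 \le (1+\varepsilon)(\Delta_{\R^4}w_\lambda)^2 + (1+\varepsilon^{-1})[O''(r^{N-1})w_\lambda']^2$; on $\mathrm{supp}\,w_\lambda$ one has $r\le\sqrt\lambda/L$, so the choice $N\ge 5$ makes the second term $o(1)$ after integration. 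The volume form $dV_{\tilde g} = \sqrt{1+O(r^N)}\,dx$ contributes a multiplicative factor bounded by $1+\varepsilon$ on $\mathrm{supp}\,w_\lambda$ for $\lambda$ small, independently of $A_0$. Finally, the scale-invariance of the $L^2$-norm of the Laplacian in dimension four gives $\int_{\R^4}(\Delta_{\R^4}w_\lambda)^2\,dx = \int_{\R^4}(\Delta_{\R^4}z_\lambda)^2\,dy$.

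It then remains to evaluate this last integral using \eqref{eqn: laplaciano euclideo di w lambda}, splitting into the three natural regions. On $\sqrt\lambda\le|y|\le 1/2$, $\Delta z_\lambda = -2|y|^{-2}$ and polar integration produces $8\pi^2\int_{\sqrt\lambda}^{1/2}\!dr/r = 4\pi^2\log(1/\lambda) + O(1)$. On $\lambda\le|y|\le\sqrt\lambda$, substituting $t = \delta^{-1}\log(1/|y|)$ with $\delta=\tfrac12\log(1/\lambda)$ converts the integral to $2\pi^2\delta\int_1^2[\delta^{-1}\xi''(t) - 2\xi'(t)]^2\,dt$; expanding the square, the leading term is $8\pi^2\delta\int_1^2(\xi')^2\,dt\le 4\pi^2 A_0^2\log(1/\lambda)$ by \eqref{eqn: stime per chi 1}, the mixed term collapses to the absolute constant $-4\pi^2[(\xi'(2))^2-(\xi'(1))^2] = 4\pi^2$ via the fundamental theorem of calculus, and the residual term is $O(\delta^{-1}\|\xi''\|_\infty^2) = O(\|\xi''\|_\infty^2)$. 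The corona $1/2\le|y|\le 1$ contributes only $O(1)$ since $\tau$ is smooth and compactly supported. Summing, multiplying by the $(1+\varepsilon)$ volume factor, and reabsorbing constants into $C_0$ gives the claim.

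The main technical obstacle lies in careful bookkeeping. One must verify that the cross term $-4\delta^{-1}\xi'\xi''$ from expanding the square integrates to a \emph{bounded} constant, independent of both $\lambda$ and of $\|\xi''\|_\infty$, rather than polluting the leading coefficient: this is exactly the step where the endpoint values $\xi'(1)=1$ and $\xi'(2)=0$ (forced by the piecewise definition of $\xi$) do the work. One also has to check that all subleading perturbations---the $O''(r^{N-1})$ correction, the $(1+\varepsilon^{-1})$ factor from Young's inequality, the deviation of $dV_{\tilde g}$ from $dx$, and the curvature contributions---are $o(1)$ uniformly in $A_0$, which is where the choice $N\ge 5$ and the scaling of $w_\lambda$ enter. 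Finally, one must confirm that the dependence of $C_0$ on $\|\xi''\|_\infty$ remains genuinely quadratic once all of these pieces are collected.
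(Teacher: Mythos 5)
Your argument follows essentially the same route as the paper: conformal invariance to pass to $\tilde g(N)$, absorbing the curvature terms into $C_0$ via $\|\nabla w_\lambda\|_{L^2}^2=O(\lambda)$, scale-invariance of $\|\Delta_{\R^4}\cdot\|_{L^2}$ in dimension four, and the same three-region splitting of $\int(\Delta_{\R^4}z_\lambda)^2$. Two small deviations from the paper's execution, both legitimate: you use Young's inequality for the $O''(r^{N-1})$ correction (the paper instead expands the square into $M_1+M_2+M_3$ and bounds the cross term $M_2$ by H\"older), which costs you a $(1+\varepsilon)^2$ factor that should be renamed $(1+\varepsilon)$ by starting from a smaller $\varepsilon$; and on the inner annulus you compute the cross term $-8\pi^2\int_1^2\xi''\xi'\,dt=4\pi^2$ exactly by the fundamental theorem of calculus, which is actually slightly sharper than the paper's bound $8\pi^2 A_0\|\xi''\|_\infty$ via sup-norms, though both suffice for the stated dependence of $C_0$.
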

\begin{proof}
Since the Paneitz operator is conformal invariant, we have
\[
\langle P_{g_0}w_\lambda,w_\lambda\rangle = \langle P_{\tilde{g}(N)}w_\lambda,w_\lambda\rangle\,, 
\]
where
\begin{equation}\label{eqn: paneitz in metrica normale conforme}
\begin{split}
\langle P_{\tilde{g}(N)}w_\lambda,w_\lambda\rangle &=
\int_M \left[\left( \Delta_{\tilde{g}(N)}w_\lambda\right)^2 +\frac{2}{3}R_{\tilde{g}(N)} 
\left|\nabla_{\tilde{g}(N)}w_\lambda\right|_{\tilde{g}(N)}^2 +\right.\\
&\\
& 
 -2\mbox{Ric}_{\tilde{g}(N)}\left(\nabla_{\tilde{g}(N)}w_\lambda,\nabla_{\tilde{g}(N)}w_\lambda\right) \bigg] dV_{\tilde{g}(N)}\,.
\end{split}
\end{equation}
Let's estimate first the term involving the Laplacian: given $\varepsilon >0$, there exists $\lambda^{\varepsilon}\in (0,\lambda_0)$ such that for $0<\lambda<\lambda^{\varepsilon}$  
\begin{equation}\label{eqn: laplaciano in metrica normale conforme}
\begin{split}
\int_M \left( \Delta_{\tilde{g}(N)}w_\lambda\right)^2 dV_{\tilde{g}(N)} &=
\int_{B_{\frac{\sqrt{\lambda}}{L}}(0)} \left( \Delta_{\tilde{g}(N)}w_\lambda\right)^2 \sqrt{1 + O(r^N)}\,dx\\
& \leq
(1+\varepsilon)\int_{B_{\frac{\sqrt{\lambda}}{L}}(0)} \left( \Delta_{\tilde{g}(N)}w_\lambda\right)^2 dx\\
& =(1+\varepsilon)\int_{B_1(0)} \left( \Delta_{\tilde{g}(N)}z_\lambda\right)^2 dx
\end{split}
\end{equation}
and, from (\ref{eqn: laplaciano di una radiale}),
\[
\int_{B_1(0)} \left( \Delta_{\tilde{g}(N)}z_\lambda\right)^2 dx=
\]
\[
=\int_{B_1(0)}\left[ \left( \Delta_{\R^4}z_\lambda\right)^2 +
2\Delta_{\R^4}z_\lambda \, z_\lambda' O''(r^{N-1})+\left(z_\lambda' O''(r^{N-1})\right)^2\right]dx
\]
\[
=: M_1+M_2+M_3\,.
\]
We have (see Appendix \textbf{A}) for $0<\lambda<\lambda^{\varepsilon}$
\begin{equation}\label{eqn: M1}
M_1\leq 4\pi^2(A_0^2+1)\log\left(1/\lambda\right) + C_0 \,,
\end{equation}
where $C_0$ is a costant depending at most quadratically on the supremum norm of $\xi''$, independent of $\lambda$ and which is allowed to vary from line to line;
\begin{equation}\label{eqn: M2}
M_2=O(\lambda^{\frac{N-3}{2}})   
\end{equation}
and
\begin{equation}\label{eqn: M3}
M_3=O(\lambda^{N-3})
\end{equation}
as $\lambda\downarrow 0$. Hence, by choosing a smaller $\lambda^{\varepsilon}$, if necessary, and recalling that by assumption $\varepsilon<1$, for all $0<\lambda<\lambda^{\varepsilon}$ we obtain
\begin{equation}\label{eqn: stima logaritmica per il laplaciano}
\int_M \left( \Delta_{\tilde{g}(N)}w_\lambda\right)^2 dV_{\tilde{g}(N)}\leq
4\pi^2(1+\varepsilon)(A_0^2+1)\log\left(1/\lambda\right) + C_0\,,   
\end{equation}
where as above $C_0$ depends at most quadratically on the supremum norm of $\xi''$ but it does not depend neither on $\lambda$ nor on $\varepsilon$.

For the remaining part of the Paneitz operator we have
\[
\int_M \left[\frac{2}{3}R_{\tilde{g}(N)} \left|\nabla_{\tilde{g}(N)}w_\lambda\right|_{\tilde{g}(N)}^2 
-2\mbox{Ric}_{\tilde{g}(N)}\left(\nabla_{\tilde{g}(N)}w_\lambda,\nabla_{\tilde{g}(N)}w_\lambda\right) \right] dV_{\tilde{g}(N)}
\]
\[
\leq
\]
\[
C \int_M \left|\nabla_{\tilde{g}(N)}w_\lambda\right|_{\tilde{g}(N)}^2  dV_{\tilde{g}(N)} =
C \int_{B_{\frac{\sqrt{\lambda}}{L}}(0)} \left|\nabla_{\tilde{g}(N)}w_\lambda\right|_{\tilde{g}(N)}^2  dV_{\tilde{g}(N)}\,, 
\]
where $C=C(M,g_0,N)$. Therefore, from (\ref{eqn: norma del gradiente di una radiale}) for all $0<\lambda<\lambda^{\varepsilon}$ we have
\[
\begin{split}
\int_{B_{\frac{\sqrt{\lambda}}{L}}(0)} \left|\nabla_{\tilde{g}(N)}w_\lambda\right|_{\tilde{g}(N)}^2  dV_{\tilde{g}(N)}& \leq 2(1+\varepsilon) \int_{B_{\frac{\sqrt{\lambda}}{L}}(0)} (w_\lambda')^2 r^3drd\underline{\theta}\\
& =2(1+\varepsilon)\frac{\lambda}{L^2} \int_{B_{1(0)}} (z_\lambda')^2 r^3drd\underline{\theta}\,.
\end{split}
\]
In a way analogous to what has already been done in the Appendix and recalling \eqref{eqn: gradiente euclideo di w lambda}, we infer that $\int_{B_{\frac{\sqrt{\lambda}}{L}}(0)} \left|\nabla_{\tilde{g}(N)}w_\lambda\right|_{\tilde{g}(N)}^2  dV_{\tilde{g}(N)}=O(\lambda)$ as $\lambda\downarrow 0$. From that and from (\ref{eqn: stima logaritmica per il laplaciano}), we conclude
\[
\langle P_{g_0}w_\lambda,w_\lambda\rangle\leq 4\pi^2(1+\varepsilon)(A_0^2+1)\log\left(1/\lambda\right) + C_0\,,   
\] 
which holds for $0<\lambda<\lambda^{\varepsilon}$.
\end{proof}
Before terminating the proof of Proposition \ref{prop: stima logaritmica}, we observe that, since the Paneitz operator is assumed to be non-negative, it defines a semi-inner product on $\Htwo$ and hence the Cauchy-Schwartz inequality holds true
\[
|\langle P_{g_0}u_1,u_2\rangle| \leq \sqrt{\langle P_{g_0}u_1,u_1\rangle}\sqrt{\langle P_{g_0}u_2,u_2\rangle}
\,, \;\; u_1,u_2\in\Htwo
\]
and hence for any $t>0$ we have
\begin{equation}\label{eqn: Young per il paneitz}
|\langle P_{g_0}u_1,u_2\rangle| \leq t \langle P_{g_0}u_1,u_1\rangle +t^{-1}\langle P_{g_0}u_2,u_2\rangle \,.  
\end{equation}
\begin{proof}[Proof of Proposition \ref{prop: stima logaritmica} (completed)]
Given $K>32\pi^2$, we can find suitable numbers (not unique)
$0<\varepsilon<1$, $\alpha>0$ and $1<A_0<2$ such that 
\[
K>4\left[ 4\pi^2(1+\varepsilon)(A_0^2+1)+\alpha\right]\,.
\]
According to Lemma \ref{lemma: II lemma strumentale},
there exists $\lambda^{\varepsilon}\in(0,\lambda_0)$ such that for $0<\lambda<\lambda^{\varepsilon}$
\[
\langle P_{g_0}w_\lambda,w_\lambda\rangle\leq 4\pi^2(1+\varepsilon)(A_0^2+1)\log\left(1/\lambda\right) + C_0\,.  
\]
Furthermore, given our $\alpha>0$, it is possible to find $\lambda(\alpha,A_0)<\lambda^{\varepsilon}$ such that for $0<\lambda<\lambda(\alpha,A_0)$
\begin{equation}\label{eqn: stima paneitz con tutte le 3 costanti}
\langle P_{g_0}w_\lambda,w_\lambda\rangle\leq \left[4\pi^2(1+\varepsilon)(A_0^2+1)+\alpha\right]\log\left(1/\lambda\right)\,.  
\end{equation}
Define $\lambda_K:=\min\left\{\lambda_{\varepsilon},\lambda(\alpha,A_0),\lambda_0/2\right\}$, where $\lambda_\varepsilon$ is given by Lemma \ref{lemma: I lemma strumentale}, and consider $0<\lambda<\lambda_K$. Set
\[
\delta:=\frac{K -4\left[4\pi^2(1+\varepsilon)(A_0^2+1)+\alpha\right]}{8\left[4\pi^2(1+\varepsilon)(A_0^2+1)+\alpha\right]}\,, \; 
K_1:= \frac{K +4\left[4\pi^2(1+\varepsilon)(A_0^2+1)+\alpha\right]}{2}
\]
and note that $\delta>0$. Thus, by (\ref{eqn: Young per il paneitz}) and (\ref{eqn: stima paneitz con tutte le 3 costanti}), we can bound
\[
\begin{split}
\langle P_{g_0}u_0+sw_\lambda,u_0+sw_\lambda\rangle &\leq
\left(1+4/\delta\right)\langle P_{g_0}u_0,u_0\rangle+s^2(1+\delta)\langle P_{g_0}w_\lambda,w_\lambda\rangle \\
&\leq \left(1+4/\delta\right)\langle P_{g_0}u_0,u_0\rangle+K_1\frac{s^2}{4}\log\left(1/\lambda\right)\,.
\end{split}
\]
Because $w_\lambda\geq 0$ and $Q_{g_0}<0$, for every $s>0$ we have
\[
Q_{g_0}\int_M (u_0+sw_\lambda) \, dV_{g_0}\leq Q_{g_0}\int_M u_0\,  dV_{g_0}\,;
\]
therefore, with a constant $\overline{C}=\overline{C}(u_0,f_0,K)$, we obtain, in view of Lemma \ref{lemma: I lemma strumentale}, that for any $s>0$ and any $0<\lambda<\lambda_K$
\[
E_\lambda(u_0+sw_\lambda) \leq K_1\frac{s^2}{4}\log\left(1/\lambda\right)-C_N(1-\varepsilon)\lambda^{8-4s}+\overline{C}\,,
\]
where $C_N$ depends only on the fixed $N$. From this, we see that, for any fixed $0<\lambda<\lambda_K$,
$E_\lambda(u_0+sw_\lambda)\to-\infty$ as $s\to\infty$ and therefore we may fix some $s_\lambda>2$ with
$v_\lambda=u_0+s_\lambda w_\lambda$ satisfying $E_\lambda(v_\lambda)<\beta_0$ to obtain
\[
c_\lambda \leq \sup_{s>0} E_\lambda(u_0+sw_\lambda)\leq \sup_{s>0}\left[K_1\frac{s^2}{4}\log\left(1/\lambda\right)-C_N(1-\varepsilon)\lambda^{8-4s}+\overline{C}\right]\,.
\]
For any $0<\lambda<\lambda_K$ the supremum in the latter quantity is achieved for some $s=s(\lambda)>2$, with
$s=s(\lambda)\to 2$ as $\lambda\downarrow 0$. Thus, taking a smaller $\lambda_K$ if necessary, we obtain eventually
\[
c_\lambda \leq K \log\left(1/\lambda\right)\,.
\]
Furthermore, since $E_\mu(v_\lambda)\leq E_\lambda(v_\lambda)$ for $\mu>\lambda$, the same comparison function
$v_\lambda$ can be used for every $\mu\in\Lambda:=(\lambda,2\lambda)\subset\Lambda_0$, and for these $\mu$
we obtain the estimate
\[
E_\mu(v_{\lambda}) < E_\mu(u_\mu) \le \sup_{\nu \in \Lambda} E_\mu(u_\nu)
  < \beta_0\le c_\mu\le K\log(1/\lambda)\le K\log(2/\mu),
\]
where $\beta_0$ and $c_\mu$ for $\mu \in\Lambda$ are as defined in \eqref{eqn: struttura mountain-pass} and \eqref{eqn: definizione di c_lambda}. The claim follows and Proposition \ref{prop: stima logaritmica} is proved.
\end{proof}
 
\section{Proof of Theorem \ref{thm: bubbling analysis}} 
 
Proposition \ref{prop: esistenza di una seconda soluzione} guarantees the existence of a sequence $(\lambda_n)_n$ such that $\lambda_n\downarrow 0$ as $n\to\infty$ and of a sequence $u_n:=u^{\lambda_n}$ of ``large'' solutions of \eqref{eqn: equazione target} with $f_{\lambda_n}$. Now in order to analyze the behaviour of the ``limit'' geometry of the manifolds $(M,e^{2u_n}g_0)$ when $\lambda_n\downarrow 0$ and to prove that $u_n$ blows up in a spherical bubble, one would like to resort to the results of \cite{Martinazzi2009} or \cite{Malchiodi2006} for instance. However, similarly to the situation occuring in the two dimensional case (\cite{Borer-Galimberti-Struwe}), the afore-mentioned results require either a uniform bound on the volume of the manifolds $(M,e^{2u_n}g_0)$ or that the function $f_{\lambda_n}$ does not change the sign, assumptions which clearly do not hold in the present case. In order to overcome these obstacles, we will resort to the ``entropy'' bound given by Proposition \ref{prop: esistenza di una seconda soluzione}. 

Reasoning as in \cite{Borer-Galimberti-Struwe}, we obtain the following result:
\begin{lemma}\label{lemma: mu e c primo mu}
We have $\liminf_{\mu\downarrow 0}(\mu|c'_{\mu}|)\leq 32\pi^2.$
\end{lemma}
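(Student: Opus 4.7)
I would prove this by a contradiction argument that compares the logarithmic upper bound on $c_\mu$ coming from Proposition \ref{prop: stima logaritmica} with the lower bound on $c_\mu$ obtained by integrating $|c'_\mu|$ over a small interval, exploiting the monotonicity of $\mu\mapsto c_\mu$. This is the direct analogue of the corresponding lemma in \cite{Borer-Galimberti-Struwe}, and once the setup is in place the argument is purely real analytic.

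The plan is as follows. Assume for contradiction that $\liminf_{\mu\downarrow 0}(\mu|c'_\mu|)>32\pi^2$. Then one can pick $K_1>32\pi^2$ and $\mu_1\in(0,\lambda_0/2)$ such that at every point of differentiability $\mu\in(0,\mu_1)$ (recall that $\mu\mapsto c_\mu$ is non-increasing, hence differentiable a.e.\ by \eqref{eqn: la c mu e' non increasing}) one has $\mu|c'_\mu|\geq K_1$, i.e.\ $|c'_\mu|\geq K_1/\mu$. Next, I would fix any $K_0$ with $32\pi^2<K_0<K_1$ and invoke Proposition \ref{prop: stima logaritmica} with $K=K_0$: this yields $\lambda_{K_0}\in(0,\mu_1]$ such that for every $\mu\in(0,\lambda_{K_0})$ we have $c_\mu\leq K_0\log(1/\mu)$.

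Now I would integrate the pointwise lower bound on $|c'_\mu|$. Since $\mu\mapsto c_\mu$ is monotone non-increasing, Lebesgue's theorem on the differentiation of monotone functions yields, for every $0<\mu<\mu_1$,
\[
c_\mu-c_{\mu_1}\;\geq\;\int_{\mu}^{\mu_1}|c'_t|\,dt\;\geq\;K_1\int_\mu^{\mu_1}\frac{dt}{t}\;=\;K_1\log(1/\mu)+K_1\log\mu_1,
\]
so $c_\mu\geq K_1\log(1/\mu)+C$ for a constant $C=C(K_1,\mu_1)$ independent of $\mu$. Comparing with the upper bound $c_\mu\leq K_0\log(1/\mu)$ valid for $\mu<\lambda_{K_0}$ gives
\[
(K_1-K_0)\log(1/\mu)\leq -C,
\]
which is impossible as $\mu\downarrow 0$, since $K_1>K_0$. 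This contradiction proves the lemma.

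There is no genuine obstacle here: the monotonicity of $\mu\mapsto c_\mu$ already provides the only ingredient needed beyond Proposition \ref{prop: stima logaritmica}, and the Lebesgue differentiation inequality for monotone functions yields the correct direction of the estimate (the integral of $c'$ underestimates the total decrease of $c$, which is exactly what I want for a lower bound on $c_\mu$). The only mild care required is to work at points of differentiability of $c_\mu$, which form a set of full measure in $(0,\mu_1)$, so the pointwise hypothesis $\mu|c'_\mu|\geq K_1$ is meaningful and the integral $\int_\mu^{\mu_1}|c'_t|\,dt$ is well defined.
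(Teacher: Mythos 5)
Your proof is correct and is essentially the same as the paper's: assume the liminf exceeds $32\pi^2$, integrate the resulting pointwise lower bound $|c'_\mu|\geq K_1/\mu$ via Lebesgue's theorem for the monotone function $\mu\mapsto c_\mu$ to get a lower bound $c_\mu\geq K_1\log(1/\mu)+C$, and contradict the upper bound $c_\mu\leq K_0\log(1/\mu)$ from Proposition \ref{prop: stima logaritmica}. The only difference is cosmetic: the paper writes both bounds in terms of $\log(\mu_0/\mu_1)$, while your version, keeping the bounds as $K_1\log(1/\mu)+C$ versus $K_0\log(1/\mu)$ and comparing coefficients, is a touch cleaner to read.
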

\begin{proof}
Otherwise there are two constants $K>K_1>32\pi^2$ and $\mu_0>0$ such that $\inf\left\{\mu|c'_{\mu}|:
0<\mu\leq\mu_0\,, \exists c'_{\mu} \right\}>K$. Hence, by Lebesgue Theorem for every $0<\mu_1<\mu_0$ we
have 
\[
c_{\mu_1}\geq c_{\mu_0}+\int_{\mu_1}^{\mu_0}|c'_\mu|d\mu \geq c_{\mu_0}+K\log\left(\mu_0/\mu_1\right)\,.
\]
On the other hand, by means of Proposition \ref{prop: stima logaritmica} we have for all sufficiently small $\mu_1>0$ that $c_{\mu_1}\leq K_1\log\left(\mu_0/\mu_1\right)$, which contradicts the above inequality.
\end{proof}
Now observe that by Propostion \ref{prop: esistenza di una seconda soluzione} for almost every sufficiently small
$\mu>0$ the second solution which we have obtained satisfies the volume bound $\int_M e^{4u^\mu}\,dV_{g_0} \leq |c'_{\mu}|+3$. After replacing $\mu$ with $\lambda$, we then have a sequence of ``large'' solutions 
$u_n:=u^{\lambda_n}$ of \eqref{eqn: equazione target} for $f_{\lambda_n}$ and with $\lambda_n\downarrow 0$
satisfying
\begin{equation}\label{eqn: volume bound}
\limsup_n\left(\lambda_n \int_M e^{4u_n}\,dV_{g_0} \right)\leq 32\pi^2\,.  
\end{equation}
Equation \eqref{eqn: k_P} now reads for the metric $e^{2u_n}g_0$ as
\[
k_P=\int_M f_0e^{4u_n}\,dV_{g_0}+\lambda_n\int_M e^{4u_n}\,dV_{g_0}\,,
\]
which in view of \eqref{eqn: volume bound} leads to the global $L^1$-bound
\begin{equation}\label{eqn: global L1 bound}
\sup_n \int_M (|f_0|+\lambda_n)e^{4u_n}\,dV_{g_0} <\infty\,.
\end{equation} 

Since $u_n$ is at least $C^4$, we have the following representation formula
\[
u_n(x)=\bar{u}_n+\int_M G(x,y)P_{g_0}u_n \,dV_{g_0}(y)\,, \;\;\; x\in M\,,
\]
where $G$ is the Green function for $P_{g_0}$ (compare Lemma 1.7 \cite{Chang-Yang95}). We set
\[
\gamma_n:=2f_{\lambda_n}e^{4u_n}-2Q_{g_0}
\]
and observe that for any $n\in\N$ the quantity $||\gamma_n||_{L^1(M)}\neq 0$, otherwise $P_{g_0}u_n=0$, $u_n=const.$ and hence $f_{\lambda_n}=const.$ Therefore, reasoning as in Lemma 2.3 \cite{Malchiodi2006}, one obtains for $j=1,2,3$
\[
|\nabla_{g_0}^ju_n|_{g_0}^p(x)\leq C(M,g_0) \int_M \left( \frac{||\gamma_n||_{L^1(M)}}{|x-y|^3} \right)^p
\,\frac{|\gamma_n(y)|}{||\gamma_n||_{L^1(M)}} \,dV_{g_0}(y)\,, 
\]
for a.e. $x\in M$. In view of the global $L^1$-bound given by \eqref{eqn: global L1 bound}, by means of Jensen's inequality and Fubini's theorem, and arguing as in \cite{Malchiodi2006}, we deduce the bound
\[
\sup_n\int_M \left(|\nabla_{g_0}^3u_n|^p_{g_0} +|\nabla_{g_0}^2u_n|^p_{g_0} +|\nabla_{g_0}u_n|^p_{g_0} \right)\, dV_{g_0} < \infty
\]
for any $p\in\left[1,4/3\right)$. By Poincar\'e's inequality we also have $\int_M |u_n-\bar{u}_n|^p\, dV_{g_0}\leq C$ uniformly in $n$; therefore, setting 
\begin{equation}\label{eqn: ausiliaria v_n}
v_n:=u_n-\bar{u}_n\,,  
\end{equation}
we deduce the boundness of the sequence $(v_n)_n$ in $W^{3,p}(M;g_0)$ for all $p\in\left[1,4/3\right)$. Therefore, by Sobolev embedding results we infer that:
\begin{enumerate}
\item $(v_n)_n$ is bounded in $L^q(M)$ for any $q\in[1,\infty);$    
\item $(\nabla_{g_0} v_n)_n$ is bounded in $L^r(M)$ for any $r\in[1,4);$ 
\item $(\nabla^2_{g_0} v_n)_n$ is bounded in $L^s(M)$ for any $s\in[1,2)$,   
\end{enumerate}
a result needed later. Observe also that $v_n$ solves
\begin{equation}\label{eqn: l'eq risolta da v_n}
P_{g_0}v_n + 2Q_{g_0} = 2f_{\lambda_n}e^{4v_n}e^{4\bar{u}_n}\;\;\;\; \mbox{on  } M\,.
\end{equation}

Noting that $||\,|f_0| +\lambda_n||_{L^1(M)}\geq ||f_0||_{L^1(M)}>0 $ uniformly in $n\in\mathbb{N}$, we define the $\left(|f_0|+\lambda_n\right)$-average of $u_n$ as
\[
\bar{\bar{u}}_n := \int_M u_n \left(|f_0|+\lambda_n\right)\, \frac{dV_{g_0}}{||\,|f_0| +\lambda_n||_{L^1(M)}}\,.
\]
Hence, in view of \eqref{eqn: global L1 bound} and Jensen's inequality, we infer the bound
\[
C\geq ||\,|f_0| +\lambda_n||_{L^1(M)} e^{4\bar{\bar{u}}_n}\geq ||f_0||_{L^1(M)}e^{4\bar{\bar{u}}_n}
\]
and consequently $\sup_n \bar{\bar{u}}_n < \infty$. Arguing as in Lemma 4.2 by \cite{Borer-Galimberti-Struwe}, we can show the existence of a positive constant $C$ independent of $n$ such that the
following Poincar\'e type inequality holds
\[
||u_n-\bar{\bar{u}}_n||_{L^2(M)}\leq C ||\nabla_{g_0}u_n||_{L^2(M)}\,.
\]
Therefore, thanks to that and to the ``classical'' Poincar\'e's inequality, we obtain that, uniformly in $n$, 
\[
|\bar{u}_n -\bar{\bar{u}}_n|\leq C ||\nabla_{g_0}u_n||_{L^2(M)} = C ||\nabla_{g_0}v_n||_{L^2(M)}\,,
\]
and we conclude by above that $\sup_n |\bar{u}_n -\bar{\bar{u}}_n|< \infty$. Since we know that $\bar{\bar{u}}_n\leq C$ uniformly in $n$, we finally obtain for our sequence of solutions $(u_n)_n$ that
\begin{equation}\label{eqn: la media e' limitata da sopra, almeno quello...}
\sup_n \bar{u}_n <\infty\,.  
\end{equation}
\begin{lemma}\label{lemma: pre-compattezza}
Let $(v_n)_n$ be the sequence defined by \eqref{eqn: ausiliaria v_n}. Then for any domain $\Omega\subset\subset M^-:= \left\{p\in M: f_0(p)<0\right\}$ we have
\[
\sup_n \int_\Omega \left(\Delta_{g_0}v_n\right)^2\,dV_{g_0}\leq C(\Omega)\,.
\]
\end{lemma}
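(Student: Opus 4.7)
The plan is to test the PDE $P_{g_0}v_n = g_n$, with $g_n := 2f_{\lambda_n}e^{4u_n} - 2Q_{g_0}$, against $\eta^4 v_n$ for a smooth cutoff $\eta\in C_c^\infty(M^-)$ satisfying $\eta\equiv 1$ on $\Omega$ and $\mathrm{supp}\,\eta\subset \Omega'$, with $\Omega\subset\subset\Omega'\subset\subset M^-$. The key structural observation is that $\inf_{\Omega'}|f_0|\geq c_0>0$, so for all $n$ large enough one has $f_{\lambda_n} = f_0+\lambda_n \leq -c_0/2 < 0$ on $\mathrm{supp}\,\eta$; this sign will be decisive in handling the nonlinear contribution.

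On the left-hand side, expanding the bilinear form $\langle P_{g_0}v_n,\eta^4 v_n\rangle$ and writing $\Delta_{g_0}(\eta^4 v_n) = \eta^4\Delta_{g_0} v_n + 2\nabla_{g_0}(\eta^4)\cdot\nabla_{g_0} v_n + v_n\Delta_{g_0}(\eta^4)$, the leading term is $\int_M \eta^4(\Delta_{g_0} v_n)^2 \,dV_{g_0}$. The cross term $8\int_M \eta^3(\Delta_{g_0} v_n)\,\nabla_{g_0}\eta\cdot\nabla_{g_0}v_n \,dV_{g_0}$ is absorbed by Young's inequality into $\tfrac{1}{4}\int_M\eta^4(\Delta_{g_0}v_n)^2 \,dV_{g_0}$ plus a remainder controlled by $\int_M|\nabla_{g_0}v_n|^2\,dV_{g_0}$; the term $\int_M v_n (\Delta_{g_0} v_n)\,\Delta_{g_0}(\eta^4)\,dV_{g_0}$ and the Ricci/scalar-curvature pieces of $P_{g_0}$ are controlled by Hölder, exploiting the uniform bounds $v_n\in L^q_{loc}$ for every $q<\infty$, $\nabla_{g_0}v_n\in L^r_{loc}\subset L^2_{loc}$ for $r<4$, and $\nabla^2_{g_0}v_n\in L^s_{loc}$ for $s<2$ already established in the excerpt. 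All these yield bounded remainders independent of $n$.

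It remains to bound $\int_M\eta^4 g_n v_n\,dV_{g_0}$ from above. The linear piece $-2Q_{g_0}\int_M \eta^4 v_n\,dV_{g_0}$ is dominated by $C\|v_n\|_{L^1(M)}\leq C$. For the nonlinear piece, split $v_n=v_n^+-v_n^-$. Since $f_{\lambda_n}<0$ on $\mathrm{supp}\,\eta$, one gets the free upper bound
\[
\int_M \eta^4 f_{\lambda_n}e^{4u_n}v_n^+\,dV_{g_0}\leq 0.
\]
On the set $\{v_n<0\}$ one has $e^{4u_n}=e^{4\bar u_n}e^{4v_n}\leq e^{4\bar u_n}\leq C$ by \eqref{eqn: la media e' limitata da sopra, almeno quello...}, so
\[
-\int_M \eta^4 f_{\lambda_n}e^{4u_n}v_n^-\,dV_{g_0} = \int_M \eta^4 |f_{\lambda_n}|e^{4u_n}v_n^-\,dV_{g_0}\leq C\|v_n\|_{L^1(M)}\leq C.
\]
Combining, $\int_M \eta^4 g_n v_n\,dV_{g_0}\leq C$, so the test-function identity, after absorbing the $\epsilon$-part of the mixed term, delivers $\int_\Omega (\Delta_{g_0}v_n)^2\,dV_{g_0}\leq \int_M \eta^4 (\Delta_{g_0}v_n)^2\,dV_{g_0}\leq C(\Omega)$.

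The main obstacle is precisely the control of $\int_M\eta^4 f_{\lambda_n}e^{4u_n}v_n\,dV_{g_0}$: a plain $L^1$–$L^\infty$ duality is unavailable because we do not control $\|v_n\|_{L^\infty}$, and Hölder with $p>1$ integrability of $e^{4u_n}$ would require bounds which in turn depend on the $H^2$-bound that we are trying to establish. The sign of $f_{\lambda_n}$ on $\Omega'\subset\subset M^-$ is what rescues the argument by forcing the potentially divergent $v_n^+$-part of the integrand to contribute with the correct sign.
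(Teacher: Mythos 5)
Your proposal is correct and follows essentially the same route as the paper: both hinge on localizing with a cutoff supported in $M^-$, integrating the equation against (a power of $\eta$ times) $v_n$, and crucially exploiting that $f_{\lambda_n}<0$ on $\operatorname{supp}\eta$ so that the $v_n^+$-contribution has a favorable sign while on $\{v_n\le 0\}$ one uses $e^{4v_n}\le 1$ together with $\sup_n\bar u_n<\infty$. The only cosmetic differences are that the paper tests with $\eta^2 v_n$ and rewrites the bilinear form via $\langle P_{g_0}(\eta v_n),\eta v_n\rangle$ (so as to invoke nonnegativity of $P_{g_0}$ and \eqref{eqn: poincare per il laplaciano}), whereas you test with $\eta^4 v_n$ and absorb the cross terms directly by Young's inequality; both are valid bookkeeping for the same estimate.
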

\begin{proof}

Given any domain $\Omega\subset\subset M^-$, we notice that it is enough to prove the result on an arbitrary metric ball $B_{d}(p)\subset\subset M^-$, since afterward the estimate for $\Omega$ can be deduced by a covering argument. Thus, let $B_{4d}=B_{4d}(p)$ be such a ball, where $d$ is chosen small enough to guarantee that we stay in a single chart. Let $0\leq \eta\leq 1$ be a smooth cut-off function whose support is $\overline{B_{2d}}$ and $\eta=1$ on $B_d$. Therefore, $\eta^2v_n\in C^{\infty}(M)$ and it is supported in $B_{2d}$. In the following, to alleviate our notation, we set for any $\alpha,\beta\in\Htwo$
\[
D\left(\nabla_{g_0}\alpha,\nabla_{g_0}\beta\right):=
\frac{2}{3}R_{g_0} g_0\left(\nabla_{g_0}\alpha,\nabla_{g_0}\beta\right) -2\mbox{Ric}_{g_0}\left(\nabla_{g_0}\alpha,\nabla_{g_0}\beta\right)\,.
\]
A straightforward computation shows that
\begin{equation}\label{eqn: precompattezza 1}
\langle P_{g_0}v_n,\eta^2v_n\rangle-\langle P_{g_0}\eta v_n,\eta v_n\rangle=
\end{equation}
\[
\begin{split}
&= 
 \int_M \left[ \eta v_n\Delta_{g_0}v_n\Delta_{g_0}\eta +2\Delta_{g_0}v_ng_0\left(\nabla_{g_0}\eta,\nabla_{g_0}(\eta v_n)\right)\right] dV_{g_0}+\\
&-\int_M \left[
v_n\Delta_{g_0}\eta\Delta_{g_0}(\eta v_n)-2\Delta_{g_0}(\eta v_n) g_0\left(\nabla_{g_0}v_n,\nabla_{g_0}\eta\right)\right]dV_{g_0} +\\
&+\int_M\left[D\left(\nabla_{g_0}v_n,\eta v_n\nabla_{g_0}\eta\right) -
D\left(v_n\nabla_{g_0}\eta,\nabla_{g_0}(\eta v_n)\right)\right] dV_{g_0}= \\
&=\int_M \left[2v_n\Delta_{g_0}v_n|\nabla_{g_0}\eta|^2_{g_0}-\left(v_n\Delta_{g_0}\eta\right)^2-
v_n^2 D\left(\nabla_{g_0}\eta,\nabla_{g_0}\eta\right)\right] dV_{g_0}+\\
&-4\int_M \left[v_n\Delta_{g_0}\eta\, g_0\left(\nabla_{g_0}v_n,\nabla_{g_0}\eta\right)+            
g_0\left(\nabla_{g_0}v_n,\nabla_{g_0}\eta\right)^2 \right] dV_{g_0}\,.
\end{split}  
\]

In view of the bounds for $v_n$ and its derivative deduced above (compare immediately after \eqref{eqn: ausiliaria v_n}) and by several applications of H\"older's inequality (see Appendix \textbf{B}), we deduce that there exists a constant $C(\eta,d)$ such that
\begin{equation}\label{eqn: precompattezza 2}
\left| \langle P_{g_0}v_n,\eta^2v_n\rangle-\langle P_{g_0}\eta v_n,\eta v_n\rangle\right|\leq C(\eta,d)
\end{equation}
uniformly in $n$. With the same reasoning we can bound for all $n$ the quantity
\[
\left|2Q_{g_0}\int_M \eta^2v_n dV_{g_0} \right|  \leq C(\eta,d)\,.
\]
Hence, integrating by parts \eqref{eqn: l'eq risolta da v_n} with $\eta^2v_n$, we infer
\[
0\leq\langle P_{g_0}\eta v_n,\eta v_n\rangle\leq C(\eta,d) +2e^{4\bar{u}_n}\int_M f_{\lambda_n}e^{4v_n}\eta^2v_n\,dV_{g_0}\,.
\]
In view of \eqref{eqn: poincare per il laplaciano} it thus follows
\[
\begin{split}
\int_{B_d}\left(\Delta_{g_0}v_n  \right)^2\,dV_{g_0} &=\int_{B_d}\left(\Delta_{g_0}(\eta v_n) \right)^2\,dV_{g_0} \leq \int_M\left(\Delta_{g_0}(\eta v_n)  \right)^2\,dV_{g_0} \\
&\leq C\left(C(\eta,d) +2e^{4\bar{u}_n}\int_M f_{\lambda_n}e^{4v_n}\eta^2v_n\,dV_{g_0}   \right)
\end{split}
\]
and hence our claim will follow if we can bound from above uniformly in $n$ the last term on the right hand side. There exists $\epsilon>0$ such that, for all $n$ sufficiently large, $f_{\lambda_n}<-\epsilon$ on the ball $B_{2d}$.
Therefore, letting $B_n^+:=B_{2d}\cap\left[v_n>0\right]$ and $B_n^-:=B_{2d}\cap\left[v_n\leq 0\right]$, 
we obtain
\[
\begin{split}
&\int_M f_{\lambda_n}e^{4v_n}\eta^2v_n\,dV_{g_0}=\int_{B_{2d}} f_{\lambda_n}e^{4v_n}\eta^2v_n\,dV_{g_0}=\\
&=\int_{B_n^+} f_{\lambda_n}e^{4v_n}\eta^2v_n\,dV_{g_0}+
\int_{B_n^-} f_{\lambda_n}e^{4v_n}\eta^2v_n\,dV_{g_0}\leq \\
& \leq -\epsilon\int_{B_n^+} e^{4v_n}\eta^2v_n\,dV_{g_0}+\int_{B_n^-} f_{\lambda_n}\eta^2v_n\,dV_{g_0}\leq\\
& \leq 0+ ||\eta^2 f_{\lambda_n}||_{\infty} ||v_n||_{L^1}\,,
\end{split}
\]
which, as we have already seen, is uniformly bounded. Recalling now \eqref{eqn: la media e' limitata da sopra, almeno quello...}, the claim follows and the Lemma is proved.
\end{proof}

By the Lemma above and by reflexivity of the space $W^{3,p}(M;g_0)$, $p\in\left(1,4/3\right)$, we infer the existence of a subsequence still denoted $(v_n)_n$ such that, as $n\to\infty$, $v_n\rightharpoonup v_\infty$
in $W^{3,p}(M;g_0)$, $p\in\left(1,4/3\right)$ and
\begin{enumerate}
\item $v_n\to v_\infty$  in $L^q(M)$ for any $q\in[1,\infty);$    
\item $\partial_\alpha v_n\to \partial_\alpha v_\infty$ in $L^r(M)$ for any $r\in[1,4)$ and $|\alpha|=1;$ 
\item $\partial^2_\alpha v_n\to \partial^2_\alpha v_\infty$ in $L^s(M)$ for any $s\in[1,2)$ and $|\alpha|=2$. 
\end{enumerate}
Furthermore, we obtain for any domain $\Omega\subset\subset M^-$ 
\[
\sup_n ||v_n||_{H^2(\Omega)} \leq C(\Omega)\,.
\]

Given such a domain $\Omega$, we take a point $p\in\Omega$ and for a sufficiently small $\delta>0$ we consider the exponential map
\[
\exp_p: B_{4\delta}(0)\subset \R^4\to M\,; \;\;\;\;\; \exp_p(0)=p
\]
and the pull-back metric $\tilde{g}:=\left(\exp_p\right)^*g_0$ on $B_{4\delta}(0)$. Letting 
$\tilde{v}_n:=v_n\circ\exp_p$ and $\tilde{f_0}:=f_0\circ\exp_p$, we obtain by definition that $\tilde{v}_n$ solves the equation
\[
P_{\tilde{g}}\tilde{v}_n(x) + 2Q_{g_0} = 2e^{4\bar{u}_n}\tilde{f}_{\lambda_n}(x)e^{4\tilde{v}_n(x)}\,, 
\;\;\;\; x\in B_{4\delta}(0)\,.
\]
We consider $\chi\in C^\infty_{c}(B_{4\delta}(0)),0\leq\chi\leq 1$ and $\chi=1$ on $B_{2\delta}(0)$. Then $\chi v_n\in C^2_{c}(B_{4\delta}(0))$ and from above we infer $\sup_n ||\chi v_n||_{H^2(\Omega)} \leq C(\Omega)$. Therefore, in view of Adams' inequality and of \eqref{eqn: la media e' limitata da sopra, almeno quello...}, the sequence $(e^{4\bar{u}_n}\tilde{f}_{\lambda_n}e^{4\tilde{v}_n}-Q_{g_0})_n$ is bounded in $L^p(B_{2\delta}(0))$ for any $p\geq 1$; therefore, by standard elliptic regularity theory (see for instance Thm 7.1 \cite{Agmon59}), we conclude
\[
||\tilde{v}_n||_{W^{4,p}(B_\delta(0))} \leq C(\delta)\left(||e^{4\bar{u}_n}\tilde{f}_{\lambda_n}e^{4\tilde{v}_n}-Q_{g_0}||_{L^p(B_{\frac{3\delta}{2}}(0))}+ ||\tilde{v}_n||_{L^p(B_{\frac{3\delta}{2}}(0))}\right)\leq C(\delta)
\]
and hence, up to subsequences, that for any $p\geq 1$, as $n\to\infty$, $\tilde{v}_n\rightharpoonup \tilde{v}_\infty$ in $W^{4,p}(B_\delta(0))$, where we have set $\tilde{v}_\infty:=v_\infty\circ\exp_p$. By Sobolev embedding we obtain $\tilde{v}_n\to\tilde{v}_\infty$ strongly in $C^{2,\alpha}(B_\delta(0))$ with $\alpha\in[0,1)$ and eventually, by a covering argument, that
\begin{equation}\label{eqn: convergenza locale in C 2 alfa}
v_n\to v_\infty \;\;\; \mbox{in   }   C^{2,\alpha}(\Omega), \; \alpha\in [0,1)
\end{equation} 
as $n\to\infty$. 

We call a point $p\in M$ a blow-up point for the sequence $(u_n)_n$ if for any $r>0$ we have $\sup_{B_r(p)}u_n\to\infty$ as $n\to\infty$. We note that there must exist at least one blow-up point for our sequence of solutions $(u_n)_n$, since otherwise by regularity arguments we could extract a subsequence converging smoothly to the absolut minimizer of $E_{f_0}$. On the other hand, at this stage the structure of the blow-up set is not so clear and in principle one could expect it to have a ``rough'' shape (compare for instance \cite{Adimurthi-Robert-Struwe}).

The next result, which is essentially based on the the concentration-compactness criterion appearing in \cite{Malchiodi2006} Prop. 3.1., actually gives a precise description of the blow-up set.

\begin{lemma}\label{lemma: concentrazione energia}
Up to subsequences, we have that the blow-up set for the solutions $(u_n)_n$ satisfying \eqref{eqn: global L1 bound} is finite. Let $\left\{p_\infty^{(1)},\dots ,p_\infty^{(I)} \right\} $ be such blow-up points. Then, for any $1\leq i\leq I$, we have $f_0(p_\infty^{(i)})=0$ and for any $r>0$ there holds
\begin{equation}\label{eqn: concentrazione energia}
\liminf_n \int_{B_r(p_\infty^{(i)})} \left|f_{\lambda_n}\right|e^{4u_n} dV_{g_0}\geq 4\pi^2\,.
\end{equation}
\end{lemma}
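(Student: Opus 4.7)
The argument will proceed in three stages: localization of blow-up points to $\{f_0=0\}$, finiteness of the blow-up set, and the $4\pi^2$ lower bound on energy concentration. The main tool for the last stage is the concentration-compactness principle of \cite[Prop.~3.1]{Malchiodi2006}, applied to $(u_n)_n$ viewed as solutions of $P_{g_0}u_n+2Q_{g_0}=2f_{\lambda_n}e^{4u_n}$.

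First, the local regularity result \eqref{eqn: convergenza locale in C 2 alfa} shows that $(v_n)$ is uniformly bounded on every compact subset of $M^-$. Combined with the upper bound $\sup_n\bar u_n<\infty$ from \eqref{eqn: la media e' limitata da sopra, almeno quello...}, this forces $u_n=v_n+\bar u_n$ to be locally bounded above on $M^-$, ruling out blow-up there. Hence every blow-up point of $(u_n)_n$ lies in $\{f_0=0\}$. Since, by hypothesis, each such point is a non-degenerate maximum of $f_0$, it is isolated, and compactness of $M$ forces $\{f_0=0\}$ to be finite. After passing to a subsequence, label by $p_\infty^{(1)},\dots,p_\infty^{(I)}$ the actual blow-up points of $(u_n)_n$ and choose $p_n^{(i)}\to p_\infty^{(i)}$ locally maximizing $u_n$ near $p_\infty^{(i)}$ with $u_n(p_n^{(i)})\to+\infty$.

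For the lower bound, the $L^1$ bound \eqref{eqn: global L1 bound} allows us to extract, up to a further subsequence, a weak-$\ast$ limit $\nu_\infty$ of the finite Radon measures $\nu_n:=|f_{\lambda_n}|e^{4u_n}\,dV_{g_0}$. The concentration-compactness principle of \cite[Prop.~3.1]{Malchiodi2006} then yields that every atom of $\nu_\infty$ has mass at least $4\pi^2$. It remains to check that each $p_\infty^{(i)}$ actually carries an atom of $\nu_\infty$, i.e.\ that a definite amount of mass concentrates there. This is obtained through the standard rescaling $\hat u_n(x)=u_n(\exp_{p_n^{(i)}}(r_n^{(i)}x))-u_n(p_n^{(i)})$ for an appropriate $r_n^{(i)}\downarrow 0$: by \eqref{eqn: espansione di f} one has $f_{\lambda_n}\ge\lambda_n/2>0$ on the ball of radius $\sqrt{\lambda_n}/L$ around $p_\infty^{(i)}$, so $|f_{\lambda_n}|=f_{\lambda_n}$ there, and the rescaled sequence $\hat u_n$ converges along a subsequence to a nontrivial entire solution of a bubble equation on $\R^4$ with strictly positive finite mass, which is captured inside every $B_r(p_\infty^{(i)})$ as $n\to\infty$. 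This gives \eqref{eqn: concentrazione energia}.

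The principal delicacy is the sign change of $f_{\lambda_n}$: concentration-compactness results are cleanest for positive prescribed curvatures, whereas here $f_0$ vanishes exactly on the blow-up set. The remedy is precisely the non-degeneracy hypothesis, which via \eqref{eqn: espansione di f} confines the bubble to the $\sqrt{\lambda_n}$-neighborhood of $p_\infty^{(i)}$ where $f_{\lambda_n}>0$, so that replacing $f_{\lambda_n}$ by $|f_{\lambda_n}|$ leaves the leading order mass intact. Identifying the rescaled profile with one of the two bubble families of Theorem \ref{thm: bubbling analysis} would give the precise concentrated mass, but for the weaker bound $\ge 4\pi^2$ it suffices to invoke the universal threshold in Malchiodi's criterion.
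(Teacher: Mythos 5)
Your localization and finiteness steps are correct and in fact take a cleaner route than the paper. You observe that $\sup_n\bar u_n<\infty$ from \eqref{eqn: la media e' limitata da sopra, almeno quello...} together with the local $C^{2,\alpha}$-boundedness of $v_n$ from \eqref{eqn: convergenza locale in C 2 alfa} forces $u_n=v_n+\bar u_n$ to be uniformly bounded above on compacts of $M^-$, so no blow-up can occur there; since $f_0$ has only non-degenerate (hence isolated) maxima and $M$ is compact, $\{f_0=0\}$ and therefore the blow-up set is finite. The paper instead derives finiteness from \eqref{eqn: global L1 bound} combined with the $4\pi^2$ lower bound, and proves the localization statement last, with a case distinction on $\inf_n\bar u_n$ that your direct argument dispenses with. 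That reorganization is a genuine and legitimate simplification.

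The lower bound \eqref{eqn: concentrazione energia} is where your argument has a gap. You split it into (a) ``atoms of the weak-$\ast$ limit $\nu_\infty$ carry mass at least $4\pi^2$'' and (b) ``each blow-up point carries an atom,'' and you propose to prove (b) by a rescaling producing a nontrivial bubble. But setting up that rescaling correctly---choosing $p_n^{(i)}$ as a maximum of $u_n$ over the set $K_n$ where $f_{\lambda_n}\ge 0$, verifying that this maximum diverges, and defining $r_n^{(i)}$ from it---is precisely the content of \eqref{eqn: i max esplodono nella parte positiva}, which the paper derives \emph{from} the refinement \eqref{eqn: concentrazione energia migliorata} of the very lemma you are trying to prove. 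As stated, the blow-up hypothesis only gives divergence of $\sup_{B_r(p_\infty^{(i)})}u_n$, not of the maximum over the region where $f_{\lambda_n}>0$; without the energy-quantization bound you have no a priori reason to know the bubble forms inside $K_n$, so the claim that ``$|f_{\lambda_n}|=f_{\lambda_n}$ on the relevant scale'' is not available yet. The clean, non-circular argument---and the one the paper uses---is the direct contrapositive of Malchiodi's local regularity criterion: suppose \eqref{eqn: concentrazione energia} fails at a blow-up point $p$, so that for some $r_p>0$ one has $\liminf_n\int_{B_r(p)}2|f_{\lambda_n}e^{4u_n}-Q_{g_0}|\,dV_{g_0}<8\pi^2$ for all small $r$; then the local version of \cite[Prop.~3.1]{Malchiodi2006}, applied on $B_{3r_p}(p)$ using only \eqref{eqn: global L1 bound}, yields $e^{4\beta v_n}$ bounded in $L^1(B_{3r_p}(p))$ for some $\beta>1$, hence $v_n$ bounded in $W^{4,\beta}(B_{2r_p}(p))$ and so in $C^{0,\alpha}$ there; together with $\sup_n\bar u_n<\infty$ this bounds $u_n$ from above on $B_{r_p}(p)$, contradicting that $p$ is a blow-up point. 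No bubble extraction is needed at this stage, and invoking one here introduces a dependence that has not yet been established.
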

\begin{proof}
Let $p\in M$ be a blow-up point for the sequence $(u_n)_n$. We assume that there exists $r_p>0$ such that for all $r<6r_p$ there holds
\begin{equation}\label{eqn: concentrazione energia, contraddizione}
\liminf_n \int_{B_r(p)} 2\left|f_{\lambda_n}e^{4u_n}-Q_{g_0} \right|dV_{g_0}< 8\pi^2\,.  
\end{equation}
We note that \eqref{eqn: global L1 bound} enables us to repeat the same reasoning appearing in \cite{Malchiodi2006} Prop. 3.1. locally on the ball $B_{3r_p}(p)$ (if one looks carefully at this proof, he will see that the arguments therein are local in nature). Therefore, we deduce the existence of a $\beta>1$ such that, up to subsequences,
\[
\sup_n\int_{B_{3r_p}(p)} e^{4\beta v_n}\,dV_{g_0}= \sup_n\int_{B_{3r_p}(p)} e^{4\beta(u_n-\bar{u}_n)}\,dV_{g_0}<\infty \,.
\]
Since $v_n$ solves \eqref{eqn: l'eq risolta da v_n}, taking account of \eqref{eqn: la media e' limitata da sopra, almeno quello...}, we see that the right hand side of \eqref{eqn: l'eq risolta da v_n} is bounded uniformly in $n$ in $L^{\beta}(B_{3r_p}(p))$ for some $\beta>1$. By standard elliptic regularity theory, similarly to what has been done above, and recalling that $(v_n)_n$ was bounded in any $L^q(M)$, we infer $||v_n||_{W^{4,\beta}(B_{2r_p}(p);g_0)}\leq C$ uniformly in $n$ and therefore, by Sobolev embedding, we conclude that the sequence $(v_n)_n$ is bounded at least in $C^{0,\alpha}(B_{2r_p}(p))$ for $\alpha\in [0,4-4/\beta]$. Thus, setting $u_n(x_n)=\sup_{B_{r_p}(p)}u$, observing that $u_n(x_n)\to\infty$, as $n\to\infty$, and recalling \eqref{eqn: la media e' limitata da sopra, almeno quello...}, we obtain
\[
u_n(x_n)\leq |u_n(x_n)-\bar{u}_n| +\bar{u}_n \leq |v_n(x_n)| + C \leq C
\]
uniformly in $n$, which is clearly a contradiction. Therefore, \eqref{eqn: concentrazione energia, contraddizione} cannot be true. From this, we deduce immediately \eqref{eqn: concentrazione energia} and, again from \eqref{eqn: global L1 bound}, we infer that the blow-up points are finite.

It remains to prove that they are all points of maximum of $f_0$. We assume that this is not the case and so that there exists a blow-up point $p\in M^-$. We now consider a small ball $B_r(p)\subset\subset M^-$ and infer, in view of \eqref{eqn: convergenza locale in C 2 alfa} that $v_n\to v_\infty$ in
$C^{2,\alpha}(B_r(p))$ as $n\to \infty$.

If $\inf_n\bar{u}_n>-\infty$, then, up to selecting a further subsequence, we would obtain that $u_n$ would converge uniformly on $B_r(p)$, which cannot be.

If on the other hand $\inf_n\bar{u}_n=-\infty$, then, again up to subsequences, we would obtain $u_n\to -\infty$ uniformly on $B_r(p)$ and conclude $e^{4u_n}\to 0$ uniformly on $B_r(p)$. But this would violate \eqref{eqn: concentrazione energia}. Therefore, we conclude that the blow-up points are all points of maximum of $f_0$ and the Lemma is proved. 
\end{proof}

\begin{remark}
We notice that, using the fact that the Green function $G$ for $P_{g_0}$ satisfies
\[
\left| G(x,y) - \frac{1}{8\pi^2}\log\frac{1}{|x-y|}\right|\leq C(M,g_0)\,, \;\;\;\;\; x,y\in M, x\neq y\,,
\]
and hence $G(p,y)>0$ for any $p\in M$, $y\in B_r(p)$ and suitable $r=r(p)$, we may repeat once again all the reasoning in Proposition 3.1. \cite{Malchiodi2006} and hence obtain the inequality 
\begin{equation}\label{eqn: concentrazione energia migliorata}
\liminf_n \int_{B_r(p_\infty^{(i)})} \left(f_{\lambda_n}\right)_+e^{4u_n} dV_{g_0}\geq 4\pi^2\,,
\end{equation}
which results in an improvement of \eqref{eqn: concentrazione energia}. In the following we are using this refinement.
\end{remark}

We set $M_\infty:=M\setminus \left\{p_\infty^{(1)},\cdots ,p_\infty^{(I)} \right\}$ and assume that $\inf_n\bar{u}_n=-\infty$. Therefore, by means of \eqref{eqn: convergenza locale in C 2 alfa}, we conclude that there exists a subsequence still denoted $(u_n)_n$ which converges locally uniformly to $-\infty$  on $M_\infty$, viz we obtain the first conclusion of Theorem \ref{thm: bubbling analysis}. 

If on the other hand there holds $\inf_n\bar{u}_n>-\infty$, we obtain with the help of \eqref{eqn: la media e' limitata da sopra, almeno quello...} that
\[
\sup_n |\bar{u}_n|<\infty\,.
\] 

Again by \eqref{eqn: convergenza locale in C 2 alfa} and by Schauder-type estimates (see for instance Thm 6.2.6 \cite{Morrey66}), we eventually obtain, as $n\to \infty$,that $u_n\to u_\infty$ smoothly locally in $M_\infty$. Clearly, we may also assume pointwise convergence almost everywhere and from Fatou's Lemma and \eqref{eqn: global L1 bound} we infer \begin{equation}\label{eqn: L1 bound per u infty}
\int_M |f_0|e^{4u_\infty}\,dV_{g_0} \leq \liminf_n\int_M (|f_0|+\lambda_n)e^{4u_n}\,dV_{g_0} <\infty\,.  
\end{equation}

Since now the averages of $u_n$ are bounded, we obtain that for $p\in\left(1,4/3\right)$ 
$u_n\rightharpoonup u_\infty$ in $W^{3,p}(M;g_0)$ and that $u_\infty\in W^{3,p}(M;g_0)\cap C^{\infty}(M_\infty)$ solves the equation
\begin{equation}\label{eqn: limit equation}
\begin{split}
\Delta^2_{g_0}u_\infty -& \Div_{g_0}\left(\frac{2}{3} R_{g_0}g_0 -2\mbox{Ric}_{g_0} \right)du_\infty
+2Q_{g_0}= \\
&=2f_0e^{4u_\infty} + \sum_{j=1}^I 8\pi^2 a_j \delta_{p_\infty^{(j)}} \;\;\;\; \mbox{on  } M
\end{split}
\end{equation}
in the distribution sense, where for all $1\leq j\leq I$ there holds $a_j\geq 1$ in view of \eqref{eqn: concentrazione energia}.

\begin{proposition}\label{prop: stima dei pesi}
For every $1\leq i\leq I$ there holds $1\leq a_i\leq \frac{3}{2}$.
\end{proposition}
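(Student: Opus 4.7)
My plan is to treat the two bounds separately. The lower bound $a_i \geq 1$ follows quickly from the improved concentration estimate \eqref{eqn: concentrazione energia migliorata}: testing the distributional equation \eqref{eqn: limit equation} against a standard cut-off $\varphi_r \in C^\infty_c(B_r(p_\infty^{(i)}))$ with $\varphi_r \equiv 1$ on $B_{r/2}(p_\infty^{(i)})$, I would split $2f_{\lambda_n} = 2(f_{\lambda_n})_+ - 2(f_{\lambda_n})_-$. The positive part concentrates at least $8\pi^2$ of mass at $p_\infty^{(i)}$ by \eqref{eqn: concentrazione energia migliorata}, while the negative part satisfies $(f_{\lambda_n})_-\leq |f_0|$ pointwise, and $\int_{B_r}|f_0|e^{4u_n}\,dV_{g_0}$ is controlled by the global $L^1$-bound \eqref{eqn: global L1 bound} together with the local smooth convergence on $M_\infty$; this last contribution tends to $0$ as $r\downarrow 0$ and one passes to the limit in $n$. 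Collecting, $8\pi^2 a_i \geq 8\pi^2$, so $a_i\geq 1$.

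For the upper bound $a_i\leq 3/2$, I would exploit the local behavior of $u_\infty$ near $p_\infty^{(i)}$ via the Green function $G$ of $P_{g_0}$, whose singularity $\bigl| G(x,y)-\tfrac{1}{8\pi^2}\log\tfrac{1}{|x-y|}\bigr|\leq C$ is recorded in the remark preceding the proposition. Applying this representation to the locally distributional equation satisfied by $u_\infty$, and exploiting both \eqref{eqn: L1 bound per u infty} and the smoothness of $u_\infty$ on $M_\infty$, I would derive the asymptotic
\[
u_\infty(x)= a_i\,\log\frac{1}{|x-p_\infty^{(i)}|} + v(x),\qquad v\in L^\infty_{\mathrm{loc}},
\]
yielding the pointwise lower bound $e^{4u_\infty(x)}\geq c\,|x-p_\infty^{(i)}|^{-4a_i}$ on a punctured neighborhood of $p_\infty^{(i)}$.

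Next I would use the non-degeneracy of the maximum of $f_0$ at $p_\infty^{(i)}$ with $f_0(p_\infty^{(i)})=0$: in local coordinates $|f_0(x)|\geq c'|x-p_\infty^{(i)}|^2$ for $x$ near $p_\infty^{(i)}$. Combining with the lower bound on $e^{4u_\infty}$ and passing to polar coordinates,
\[
\int_{B_r(p_\infty^{(i)})} |f_0|\,e^{4u_\infty}\,dV_{g_0}\;\geq\; c_0\int_0^r \rho^{5-4a_i}\,d\rho,
\]
which is finite only when $5-4a_i>-1$, i.e.\ $a_i<3/2$. Since the left-hand side is finite by \eqref{eqn: L1 bound per u infty}, we conclude $a_i\leq 3/2$ (in fact strictly, the boundary case $a_i=3/2$ being excluded by the logarithmic divergence of the integral).

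The main obstacle will be the asymptotic in the second paragraph: writing $u_\infty = a_i\, G(\cdot,p_\infty^{(i)}) + w$ with $P_{g_0}w = 2f_0 e^{4u_\infty}-2Q_{g_0}$, one must promote $w$ from ``$P_{g_0}^{-1}$ of an $L^1$-function'' to $L^\infty_{\mathrm{loc}}$ near $p_\infty^{(i)}$. Since $u_\infty \in C^\infty(M_\infty)$, away from $p_\infty^{(i)}$ this is pure elliptic regularity. Near $p_\infty^{(i)}$, the difficulty is that the very singularity we want to control feeds back into $f_0 e^{4u_\infty}$; one must therefore carry out a bootstrap reminiscent of the Brezis-Merle argument, adapted to the fourth-order operator $P_{g_0}$, using the improved concentration \eqref{eqn: concentrazione energia migliorata} and the finiteness of $\int |f_0|e^{4u_\infty}$ to start the iteration. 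Once $v\in L^\infty_{\mathrm{loc}}$ is secured, the polar-coordinate computation delivers the bound immediately.
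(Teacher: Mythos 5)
Your two bounds are of quite different status. The lower bound $a_i\geq 1$ is immediate from \eqref{eqn: concentrazione energia} (and indeed the paper records it already in the sentence following \eqref{eqn: limit equation}); your cut-off/splitting argument for it is fine, if slightly more elaborate than necessary.

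For the upper bound the skeleton of your argument is the same as the paper's — isolate the logarithmic singularity coming from the Green function, use the non-degeneracy of the maximum to get $|f_0|\sim |y|^2$, and let the finiteness of $\int|f_0|e^{4u_\infty}$ force $a_i\leq 3/2$ — but the crucial intermediate step is over-claimed. You need the remainder $v$ (equivalently, $\tilde w_\infty^{(2)}$ in the paper's notation) to be \emph{pointwise bounded} near $p_\infty^{(i)}$, so that $e^{4u_\infty}\geq c\,|x|^{-4a_i}$; you say this should follow from a Brezis--Merle-type bootstrap. But Brezis--Merle-type arguments (and Lin's Lemma 2.3, which is the fourth-order analogue invoked in the paper) give \emph{exponential $L^p$ integrability} of the remainder, namely $e^{p|\tilde w_\infty^{(2)}|}\in L^1(B)$ for every finite $p$ — not $\tilde w_\infty^{(2)}\in L^\infty_{\mathrm{loc}}$. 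These are genuinely different: for instance $w(x)=\log\log(1/|x|)$ satisfies $e^{p|w|}\in L^1$ for all $p$ near the origin but is unbounded, and such iterated-log corrections cannot be ruled out from $L^1$ right-hand-side data alone. Consequently the pointwise lower bound on $e^{4u_\infty}$, and the ensuing polar-coordinate computation with the strict conclusion $a_i<3/2$, do not follow from what you have.

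The paper avoids this by \emph{not} trying to bound the remainder pointwise. After writing $u_\infty=\sum_j k^{(j)}+w_\infty$ (with $k^{(j)}=8\pi^2 a_j G(p_\infty^{(j)},\cdot)$), it decomposes $\tilde w_\infty=\tilde w_\infty^{(1)}+\tilde w_\infty^{(2)}$ into a biharmonic part (smooth) plus a part controlled only by Lin's lemma in the $L^p$-exponential sense. This yields the two-sided estimate $|\tilde f_0|e^{4\tilde u_\infty}\asymp |y|^{2-4a_i}e^{4\tilde w_\infty^{(2)}}$, and then a Hölder inequality with exponents $q\in(1,2]$, $p=1/(q-1)$ transfers the $L^p$-exponential bound into the finiteness of $\int_B|y|^{(2-4a_i)/q}\,dy$, giving $a_i<q+\tfrac12$ for every $q>1$, hence $a_i\leq\tfrac32$. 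If you insist on your pointwise route you would need to prove a full asymptotic expansion of $u_\infty$ at the singularity (à la the 2D Chen--Li classification, adapted to the Paneitz operator), which is substantially more work than the Hölder device and is not supplied by your sketch.
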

\begin{proof}
With the help of the Green function $G$ for $P_{g_0}$ and via the related representation formula we deduce that the functions
\[
k^{(j)}(x):= 8\pi^2a_j \,G(p_{\infty}^{(j)},x)\,, \;\;\; x\in M\,,\;\; j=1,\cdots ,I
\] 
solve in the distribution sense the equations 
\[
P_{g_0}k^{(j)}=8\pi^2a_j \left(\delta_{p_\infty^{(j)}}-1\right) \;\;\; \mbox{on  } M\,.
\]
Hence, the function $w_\infty:= u_\infty-\sum_{j=1}^Ik^{(j)}$ solves distributionally the equation
\[
P_{g_0}w_\infty= -2Q_{g_0}+2f_0e^{4u_\infty} + 8\pi^2\sum_{j=1}^I a_j \;\;\; \mbox{on  } M\,,
\]
where the right hand side is in $L^1(M)$. Since we have seen that $u_\infty\in C^{\infty}(M_\infty)$, by elliptic regularity it follows $w_\infty\in C^{\infty}(M_\infty)$.

We fix $p_\infty^{(i)}$, choose normal coordinates $y\in B_\delta(0)$ around $p_\infty^{(i)}\simeq 0$ and set $\tilde{w}_\infty:=w_\infty\circ\exp$ and $\tilde{g}_0=\exp^\ast g_0$. With the help of the standard estimates for the Green function (compare \cite{Chang-Yang95}) and since $G$ is smooth outside the diagonal, we obtain that $\tilde{w}_\infty\in W^{3,p}(B_\delta(0);\tilde{g}_0)\cap C^{\infty}(\overline{B_\delta(0)}\setminus\{0\})$ with $p\in\left[1,4/3\right)$ and that it weakly solves
\[
\Delta^2_{\tilde{g}_0}\tilde{w}_\infty= \Div_{\tilde{g}_0}\left(\frac{2}{3} R_{\tilde{g}_0}\tilde{g}_0 -2\mbox{Ric}_{\tilde{g}_0} \right)d\tilde{w}_\infty-2Q_{g_0}+2\tilde{f}_0e^{4\tilde{u}_\infty} + 8\pi^2\sum_{j=1}^I a_j
\]
in $B_\delta(0)$. Notice that the right hand side of the equation above is in $L^1(B_\delta(0); \tilde{g}_0)$. 

We write $\tilde{w}_\infty= \tilde{w}_\infty^{(1)}+\tilde{w}_\infty^{(2)}$, where $\tilde{w}_\infty^{(1)}$ classically solves
\[
\left\{
\begin{array}{ll}
  \Delta^2_{\tilde{g}_0}\tilde{w}_\infty^{(1)}=0\,, & \mbox{in } B_\delta(0)\,, \\
  \tilde{w}_\infty^{(1)}=\tilde{w}_\infty\,, \; \Delta_{\R^4}\tilde{w}_\infty^{(1)}=\Delta_{\R^4}\tilde{w}_\infty\,, & \mbox{on } \partial B_\delta(0)\,.
\end{array}
\right.
\]
Therefore, with the help of Lemma 2.3 in \cite{Lin98} we infer that for any $1\leq p <\infty$, on a sufficiently small ball $B$, there holds $e^{4|\tilde{w}_\infty^{(2)}|}\in L^p(B)$. (Actually, the aforementioned Lemma has been proven for equations involving the euclidean bi-Laplacian; but, it is not difficult to generalize it to our case.)

We observe that, because $p_\infty^{(i)}\simeq 0$ is a non-degenerate maximum point of $f_0$, there holds on a sufficiently small ball that $C^{-1}|y|^2\leq |\tilde{f}_0(y)|\leq C |y|^2$ for some constant $C>1$. Hence, we conclude
\[
\begin{split}
|\tilde{f}_0(y)|e^{4\tilde{u}_\infty} &= |\tilde{f}_0(y)|e^{4\tilde{w}_\infty}e^{4\sum_{j\neq i}k^{(j)}}e^{4k^{(i)}} \\
&\leq C|y|^2\underbrace{e^{4\tilde{w}_\infty^{(1)}}}_{\in C^\infty}  e^{4\tilde{w}_\infty^{(2)}} \underbrace{e^{4\sum_{j\neq i}k^{(j)}}}_{\in C^\infty}  e^{4k^{(i)}} \\
&\leq C|y|^2e^{4\tilde{w}_\infty^{(2)}} e^{4k^{(i)}} \\
&\leq C|y|^{2-4a_1} e^{4\tilde{w}_\infty^{(2)}}
\end{split}
\]
and similarly $|\tilde{f}_0(y)|e^{4\tilde{u}_\infty}\geq C^{-1} |y|^{2-4a_1} e^{4\tilde{w}_\infty^{(2)}}$. We fix $1<q\leq 2$ and choose $p=1/(q-1)$. Therefore, on a sufficiently small ball $B$, we obtain
\[
\begin{split}
\int_B |y|^{\frac{2-4a_i}{q}}\,dy &= \int_B \left(|y|^{2-4a_i}e^{4\tilde{w}_\infty^{(2)}}\right)^{1/q}
e^{\frac{-4\tilde{w}_\infty^{(2)}}{q}} \,dy\\
&\leq \left( \int_B |y|^{2-4a_i}e^{4\tilde{w}_\infty^{(2)}} \,dy\right)^{1/q}\left(\int_Be^{\frac{-4\tilde{w}_\infty^{(2)}}{q-1}}\,dy\right)^{1-1/q}\\
&\leq C \left(\int_{\exp(B)}|f_0|e^{4u_\infty}\, dV_{g_0}  \right)^{1/q} \left(\int_Be^{\frac{4|\tilde{w}_\infty^{(2)}|}{q-1}}\,dy\right)^{1-1/q}\\
&\leq C \left(\int_Be^{4p|\tilde{w}_\infty^{(2)}|}\,dy\right)^{1-1/q} \leq C(q)\,,
\end{split}
\]
where we have used \eqref{eqn: global L1 bound}. Then, we conclude that $1\leq a_i \leq 3/2$, for $1\leq i\leq I$.

\end{proof}
\begin{proof}[Proof of Theorem \ref{thm: bubbling analysis} (completed)]    
It remains to analyse the blow-up behavior near each point $p^{(i)}_\infty$, $1\leq i\leq I$. We choose $\delta>0$ and consider the exponential map
\[
\exp: B_\delta(0)\to\exp(B_\delta(0))\,,\;\;\;\; \exp(0)=p^{(i)}_\infty
\]
($\delta$ is chosen sufficiently small in order to guarantee that in $\exp(B_\delta(0))$ the only point of maximum of $f_0$ is $p^{(i)}_\infty$). We set $K_n:={\{p\in M: f_0(p)+\lambda_n\geq 0\}}\cap \exp(B_\delta(0))$ and observe that equation \eqref{eqn: concentrazione energia migliorata} implies, up to subsequences,
\begin{equation}\label{eqn: i max esplodono nella parte positiva}
\lim_n \left(\lambda_n\max_{K_n}e^{4u_n}\right)=\infty\,.  
\end{equation}
Therefore, there exists a sequence $(p_n^{(i)})_n\subset M$ such that $u_n(p_n^{(i)})=\max_{K_n}u_n\to\infty $ and $p_n^{(i)}\to p^{(i)}_\infty$ as $n\to\infty$. To alleviate our notation, we set $p_n:=p_n^{(i)}$ and $x_n:=\exp^{-1}(p_n)\to 0$, and consider the pull-back metric $\tilde{g}_0=\exp^\ast g_0$. Therefore, by definition we have
\[
P_{\tilde{g}_0}\tilde{u}_n(x) + 2Q_{g_0} = 2\tilde{f}_{\lambda_n}(x)e^{4\tilde{u}_n(x)}\,, 
\;\;\;\; x\in B_{\delta}(0)\,,
\]
where $\tilde{u}_n=u_n\circ\exp$ and $\tilde{f_0}=f_0\circ\exp$.

Since normal coordinates are determined up to the action of the orthogonal group, we can assume from the beginning that
$\tilde{f}_0$ admits the following expansion
\begin{equation}\label{eqn: espansione di f0}
\tilde{f}_0(x)= -\sum_{i=1}^4\alpha_i x_i^2 + O(|x|^3)\,, \;\;\; 0<\alpha_1\leq\cdots \leq \alpha_4\,, \; x\in B_\delta(0)\,,  
\end{equation}
thanks to the fact the $p_\infty^{(i)}\simeq 0$ is a non-degenerate point of maximum of $f_0$. Provided that we choose $\delta$ sufficiently small from the beginning, we can further assume that
\[
-\frac{3}{2}\sum_{i=1}^4\alpha_i x_i^2 \leq\tilde{f}_0(x) \leq -\frac{1}{2}\sum_{i=1}^4\alpha_i x_i^2
\]
for all $x\in B_\delta(0)$. But then the following inclusions hold true
\begin{equation}\label{eqn: ellissoidi di controllo}
\Theta_2(n)\subset \exp^{-1}(K_n)\subset \Theta_1(n)\,,   
\end{equation}
where $\Theta_1(n)$ ( respectively $\Theta_2(n)$) is the ellipsoid of centre 0 and semi-axis of length $\sqrt{\frac{2\lambda_n}{\alpha_i}}$ ( respectively $\sqrt{\frac{2\lambda_n}{3\alpha_i}}$), $i=1,...,4$.

We first deal with the case: \\
\textbf{i)}
\[
\limsup_n \lambda_n^3e^{4\tilde{u}_n(x_n)}=\infty\,, \;\;\; \limsup_n \frac{\sqrt{\lambda_n}}{|x_n|}> \beta \sqrt{\frac{3\alpha_4}{2}}\,,
\]
where $\beta\geq 2$. 

Under these assumptions we define $r_n>0$ as
\[
r_n^4\lambda_n e^{4\tilde{u}_n(x_n)}= \frac{1}{2}\,.
\]
By \eqref{eqn: i max esplodono nella parte positiva} it immediately follows $r_n\to 0$. We now define the map
\[
\begin{split}
V_n:&\,\, x\longmapsto x_n+r_nx \\
& B_{\delta/r_n}(-x_n/r_n)\to B_\delta(0)
\end{split}
\]
and notice that $B_{\delta/r_n}(-x_n/r_n)$ exhausts $\R^4$ as $n\to\infty$. We consider the metric $\hat{g}_n=r_n^{-2}V_n^\ast\,\tilde{g}_0$ on $B_{\delta/r_n}(-x_n/r_n)$ and the functions
\[
\hat{u}_n(x)= \tilde{u}_n(V_n(x))-\tilde{u}_n(x_n)\,, \;\;\; x\in B_{\delta/r_n}(-x_n/r_n)\,.
\]
Therefore, for all $n$ sufficiently large we have $\hat{u}_n(0)=0$ and
\[
P_{\hat{g}_n}\hat{u}_n(x) + 2r_n^4Q_{g_0} = \left(\frac{\tilde{f}_0(V_n(x))}{\lambda_n} + 1\right)e^{4\hat{u}_n(x)}\,, 
\;\;\;\; x\in B_{\delta/r_n}(-x_n/r_n)\,.
\]
Furtermore, there holds for any $m\in\N_0$ that $\hat{g}_n\to\delta_{\R^4}$ in $C^m_{loc}(\R^4)$ as $n\to\infty$. By a change of variable we also obtain
\[
\begin{split}
\int_{B_{\delta/r_n}(-x_n/r_n)} e^{4\hat{u}_n(x)}\, dV_{\hat{g}_n}(x) &= \int_{B_{\delta/r_n}(-x_n/r_n)} e^{4\hat{u}_n(x)}r_n^{-4}\, dV_{V_n^\ast\,\tilde{g}_0}(x) \\
&= \int_{B_{\delta/r_n}(-x_n/r_n)} e^{4\tilde{u}_n(V_n(x))}e^{-4\tilde{u}_n(x_n)}r_n^{-4}\, dV_{V_n^\ast\,\tilde{g}_0}(x)\\&=  \int_{B_{\delta}(0)} e^{4\tilde{u}_n(x)}2\lambda_n\, dV_{\tilde{g}_0}(x) \\
&\leq 2\lambda_n \int_M e^{4u_n(x)}\, dV_{{g}_0}(x) 
\end{split}
\]
and hence, in view of \eqref{eqn: volume bound}, for any $\Omega\subset\subset\R^4$ we obtain
\begin{equation}\label{eqn: volume bound 2}
\limsup_n \int_\Omega e^{4\hat{u}_n(x)}\, dV_{\hat{g}_n}(x)\leq 64\pi^2\,.
\end{equation}
We fix $\Omega=B_R(0)$. In view of \eqref{eqn: espansione di f0} and the assumption $\limsup_n \frac{\sqrt{\lambda_n}}{|x_n|}> \beta \sqrt{\frac{3\alpha_4}{2}}$, and since $\frac{r_n^2}{\lambda_n}\to 0$, we deduce that $\frac{\tilde{f}_0(V_n(x))}{\lambda_n} + 1\to \lim_n \frac{\tilde{f}_0(x_n)}{\lambda_n}+1=:c_\infty\in \left(0,1\right]$ in $C^1(B_R(0))$.

Observe that for any $n>N(\Omega)$ we have that for any $z\in\overline{\Omega}$
\begin{equation}\label{eqn: ho spazio per le stime}
B_{r_n}(x;M) \subset\subset \exp(\Theta_2(n))\subset K_n
\end{equation}
where $x:=\exp(V_n(z))$ and $B_{r_n}(x;M)$ is the geodesic ball in $M$ of centre $x$ and radius $r_n$. Therefore, for all $n>N(\Omega)$ and such $x$ we can write with the help of the Green function for $P_{g_0}$ 
\[
\begin{split}
|\nabla^ju_n|_{g_0}(x) &\leq \int_M |\nabla^jG(x,y)|_{g_0} |f_{\lambda_n}(y)e^{4u_n(y)}-2Q_{g_0}|\,dV_{{g}_0}(y)\\
&\leq \int_M |\nabla^jG(x,y)|_{g_0} |f_{\lambda_n}(y)e^{4u_n(y)}|\,dV_{{g}_0}(y)+ C(M,g_0,j)
\end{split}
\]
with $j=1,2,3$; here we have used the estimates 
\[
|\nabla^jG(x,y)|_{g_0}\leq C(M,g_0,j)|x-y|^{-j}
\]
(compare \cite{Malchiodi2006}). We have to deal with the first term: notice that in view of \eqref{eqn: global L1 bound} we easily obtain
\[
\begin{split}
\int_{M\setminus B_{r_n}(x;M) } &|\nabla^jG(x,y)|_{g_0} |f_{\lambda_n}(y)e^{4u_n(y)}|\,dV_{{g}_0}(y)
\leq \\
& \leq  C(M,g_0,j) r_n^{-j}\int_{M\setminus B_{r_n}(x;M) } |f_{\lambda_n}(y)e^{4u_n(y)}|\,dV_{{g}_0}(y)= O(r_n^{-j})\,.
\end{split}
\] 
For the remaining part we first observe that, because of \eqref{eqn: ho spazio per le stime}, $f_{\lambda_n}$ is positive on $B_{r_n}(x;M)$ and bounded by $\lambda_ne^{4u_n(p_n)}$. Therefore, recalling the definition of $r_n$, we can write
\[
\begin{split}
\int_{ B_{r_n}(x;M) } &|\nabla^jG(x,y)|_{g_0} |f_{\lambda_n}(y)e^{4u_n(y)}|\,dV_{{g}_0}(y)
\leq \\
& \leq  C(M,g_0,j)\lambda_ne^{4u_n(p_n)} \int_{B_{r_n}(x;M) } |x-y|^{-j}\,dV_{{g}_0}(y)\\
&=C(M,g_0,j)2r_n^{-4}O(r_n^{-j+4}) =O(r_n^{-j})\,.
\end{split}
\]
In conclusion, we have showed $|\nabla^ju_n|_{g_0}(x)\leq C + O(r_n^{-j})$ for all $x:=\exp(V_n(z))$ with $z$ ranging in $\overline{\Omega}=\overline{B_R(0)}$ and $n>N(\Omega)$. Hence, recalling the definitions of $\hat{u}_n$ and $\hat{g}_n$, it is immediate to obtain $|\nabla^j\hat{u}_n|_{\hat{g}_n}(z)=r_n^j|\nabla^ju_n|_{g_0}(x)$ and thus
\[
\sup_{z\in\overline{B_R(0)}} |\nabla^j\hat{u}_n|_{\hat{g}_n}(z) <C\,, \;\;\;\; j=1,2,3
\]
uniformly in $n$. Recalling that $\hat{u}_n(0)=0$, we deduce also
\[
|\hat{u}_n(z)|=|\hat{u}_n(z)-\hat{u}_n(0)|\leq \left(\sup_{y\in\overline{B_R(0)}} |\nabla\hat{u}_n|_{\hat{g}_n}(y)\right)|z|\leq C(R)\,, \;\; z\in B_R(0)\,.
\]
This inequality and the above bounds on the derivatives of order up to 3 enables us to apply Ascoli-Arzel\`a's theorem and obtain a subsequence $(\hat{u}_n)_n$ which converges in $C^2(B_R(0))$ to some limit function $w$. Therefore, by means of Schauder's type estimates (see for instance Thm 6.4.4 \cite{Morrey66}) and recalling that, for any $m\in\N_0$, $\hat{g}_n\to\delta_{\R^4}$ in $C^m_{loc}(\R^4)$ (and thus the coefficients in the estimates do not depend on $n$) one obtains $\hat{u}_n\to w$ in $C^{4,\alpha}_{loc}(\R^4)$, where $w$ solves the equation
\[
\Delta^2_{\R^4}w = c_\infty e^{4w}\;\;\; \mbox{on } \R^4\,.
\]
with $c_\infty\in\left(0,1\right]$. Moreover, by \eqref{eqn: volume bound 2} one obtains
\[
\int_{\R^4} e^{4w}\, dx\leq 64\pi^2
\]
as well. Finally, since the image of $B_R(0)$ via the map $\exp\circ V_n$ was compactly contained in $K_n$ for all $n$ large enough (compare \eqref{eqn: ho spazio per le stime}), it follows $\hat{u}_n(z)\leq \hat{u}_n(0)=0$ for all $z\in B_R(0)$ and hence $w\leq w(0)=0$ in $\R^4$. Therefore, after replacing the expression $r_n^4\lambda_n e^{4\tilde{u}_n(x_n)}= \frac{1}{2}$ with $r_n^4\lambda_n e^{4\tilde{u}_n(x_n)}= \frac{1}{2c_\infty}$, we obtain, with a little abuse of notation, that $w$ classically solves $\Delta^2_{\R^4}w = e^{4w}$, $w\leq w(0)=0$ and $e^{4w}\in L^1(\R^4)$. From the classification of the solutions of this equation by \cite{Lin98} we obtain that either there exists $\mu>0$ such that
\[
\Delta_{\R^4}w\geq \mu \;\;\; \mbox{in } \R^4
\]
or 
\[
w(x)= -\log\left( 1+\frac{|x|^2}{4\sqrt{6}}\right)\,.
\]
We are going to rule the first alternative out. If it occured, then, similarly in the spirit to what has already been done and following \cite{Druet-Robert2005}, we could write
\[
\begin{split}
\int_{B_R(0)} &|\Delta_{\hat{g}_n}\hat{u}_n|\, dV_{\hat{g}_n}= \int_{B_{r_nR}(x_n;M)} r_n^{-2}|\Delta_{g_0}u_n|\, dV_{g_0} \\
&=2r_n^{-2} \int_{B_{r_nR}(x_n;M)}\int_M |\Delta_{g_0}G(x,y)||f_{\lambda_n}(y)e^{4u_n(y)}-Q_{g_0}|\,
dV_{g_0}(y)dV_{g_0}(x)\\
&\leq C r_n^{-2} \int_M|f_{\lambda_n}(y)e^{4u_n(y)}-Q_{g_0}|\int_{B_{r_nR}(x_n;M)}|x-y|^{-2} \,dV_{g_0}(x)dV_{g_0}(y)\\
&\leq C R^2 \int_M|f_{\lambda_n}(y)e^{4u_n(y)}-Q_{g_0}|\,dV_{g_0}(y)=O(R^2)
\end{split}
\]
as $R\to\infty$. Then in the limit for $n$ we would obtain $\mu |S^3|R^4\leq \int_{B_R(0)} |\Delta_{\R^4}w|\,dx=O(R^2)$, which for $R>>0$ is a contradiction, and therefore the second alternative must occur and we obtain alternative a) of Thm \ref{thm: bubbling analysis}.

\textbf{ii)} We now treat the case \[
\limsup_n \lambda_n^3e^{4\tilde{u}_n(x_n)}<\infty\,.
\]
We observe that from \eqref{eqn: concentrazione energia migliorata} it easily follows
$\liminf_n \lambda_n^3e^{4\tilde{u}_n(x_n)}>0$ as well. Therefore, there holds uniformly in $n$
\begin{equation}\label{eqn: stima per u_n x_n}
 |\tilde{u}_n(x_n)+\frac{3}{4}\log\lambda_n|\leq C\,.  
\end{equation}
We now define
\[
r_n^4=\frac{\lambda_n^2}{c\alpha^2_4}
\]
where $c>0$ is sufficiently large, and the map  
\[
\begin{split}
V_n:&\,\, x\longmapsto r_nx \\
& B_{\delta/r_n}(0)\to B_\delta(0)\,.
\end{split}
\]
Eventually, we consider the metric $\hat{g}_n=r_n^{-2}V_n^\ast\,\tilde{g}_0$ on $B_{\delta/r_n}(0)$ and the functions
\[
\hat{u}_n(x)= \tilde{u}_n(V_n(x))+\frac{3}{4}\log \lambda_n\,, \;\;\; x\in B_{\delta/r_n}(0)\,.
\]
Therefore, there holds
\[
P_{\hat{g}_n}\hat{u}_n(x) + 2r_n^4Q_{g_0} = \frac{2}{c\alpha_4^2}\left(\frac{\tilde{f}_0(V_n(x))}{\lambda_n} + 1\right)e^{4\hat{u}_n(x)}\,, 
\;\;\;\; x\in B_{\delta/r_n}(0)\,.
\]
We notice that for some $L>0$ we have $\{x_n/r_n\}_n\subset B_L(0)$. Moreover, from \eqref{eqn: stima per u_n x_n} we infer that for some positive constant $C$ there holds $|\hat{u}_n(x_n/r_n)|\leq C$ uniformly in $n$. As above, it can be seen that for any subsets $\Omega\subset\subset\R^4$ there holds
\begin{equation}\label{eqn: volume bound 3}
\limsup_n \int_\Omega e^{4\hat{u}_n(x)}\, dV_{\hat{g}_n}(x)\leq C\,,
\end{equation}
where $C$ is independent of $\Omega$, and that $\frac{\tilde{f}_0(V_n(x))}{\lambda_n}$ converges uniformly as $n\to\infty$ to $\frac{1}{2}D^2f_0(p_\infty^{(i)})\left[x,x\right]$. We set $h_\infty(x):=\frac{1}{c\alpha_4^2}(D^2f_0(p_\infty^{(i)})\left[x,x\right]+2)$.

From the definition of $r_n$ and reasoning as it has already been done, one obtains, again by means of estimates involving the Green function for $P_{g_0}$, that $\sup_{z\in\overline{\Omega}} |\nabla^j\hat{u}_n|_{\hat{g}_n}(z) <C$ for $j=1,2,3$ and uniformly in $n$. Finally, for any $\Omega$ containing $B_L(0)$, we obtain for
any $z\in\overline{\Omega}$
\[
\begin{split}
|\hat{u}_n(z)|&\leq  |\hat{u}_n(z)-\hat{u}_n(x_n/r_n)|+ |\hat{u}_n(x_n/r_n)| \\
&\leq \sup_{w\in\overline{\Omega}} |\nabla\hat{u}_n|_{\hat{g}_n}(w)\,\,|z-x_n/r_n| + C\leq C(\Omega)\,.
\end{split}
\]
Then, as above, we can extract by means of standard elliptic estimates a sequence $(\hat{u}_n)_n$ converging in $C^4_{loc}(\R^4)$ to a function $\tilde{w}$, which solves
\[
\Delta^2_{\R^4}\tilde{w} = h_\infty(x) e^{4\tilde{w}}\;\;\; \mbox{on } \R^4\,,
\]
with finite volume and finite total curvature
\[
\int_{\R^4} e^{4\tilde{w}}\, dx< \infty\,, \;\;\int_{\R^4} |h_\infty|e^{4\tilde{w}}\, dx< \infty\,.
\]
Therefore, setting $w:= \tilde{w}-1/4\log(c\alpha_4^2)$, we obtain alternative b) of Thm \ref{thm: bubbling analysis}.

\textbf{iii)} We finally deal with the case
\[
\limsup_n \frac{\sqrt{\lambda_n}}{|x_n|}\leq \beta \sqrt{\frac{3\alpha_4}{2}}\,,
\]
where $\beta\geq 2$. With this assumption and recalling \eqref{eqn: ellissoidi di controllo}, we deduce the existence of a costant $C\geq 1$ such that $C^{-1}\sqrt{\lambda_n}\leq |x_n|\leq C\sqrt{\lambda_n}$. We define $r_n$, the map $V_n$ and $\hat{u}_n$ in the same way as in step \textbf{ii)}. Then, it follows
\begin{equation}\label{eqn: u hat n di x_n/r_n non esplode}
\inf_n \hat{u}_n(x_n/r_n)>-\infty
\end{equation}
and $\{x_n/r_n\}_n\subset B_L(0)$ for some $L>0$.

Moreover, once again we obtain, for any $\Omega\subset\subset\R^4$, equation \eqref{eqn: volume bound 3}, uniform convergence of the $Q$-curvature to $h_\infty$ as well as $\sup_{z\in\overline{\Omega}} |\nabla^j\hat{u}_n|_{\hat{g}_n}(z) <C$ for $j=1,2,3$ and uniformly in $n$.

Now we fix $\Omega:= B_R(0)$ with $R>L$ and define 
\[
v_n(x):= \hat{u}_n(x)-\hat{u}_{n,R}\,, \;\;\; x\in B_R(0)
\]
where $\hat{u}_{n,R}:=\frac{1}{Vol(B_R(0);\hat{g}_n)}\int_{B_R(0)}\hat{u}_n \, dV_{\hat{g}_n}$. We choose $p>4/3$. Hence, via Poincar\'e's inequality, via the estimates involving the derivatives of $\hat{u}_n$, and recalling that $\hat{g}_n\to\delta_{\R^4}$ in $C^m_{loc}(\R^4)$ for any $m\geq 1$, we obtain that $(v_n)_n$ is bounded in $W^{3,p}(B_R(0),dx)$. Therefore, by reflexivity and Sobolev embedding, we obtain, up to subsequences, that $v_n\to v_\infty$ in $C^0(\overline{B_R(0)})$.

We observe that
\[
\begin{split}
C &\geq \int_{B_R(0)}e^{4\hat{u}_n} \, dV_{\hat{g}_n}=\int_{B_R(0)}e^{4v_n} \, dV_{\hat{g}_n}\exp(4\hat{u}_{n,R})\\ 
& =\left(o(1)+ \int_{B_R(0)}e^{4v_\infty} \, dx\right)\exp(4\hat{u}_{n,R})\,,
\end{split}
\]
with $o(1)\to 0 $ as $n\to\infty$. Therefore, there holds $\hat{u}_{n,R}\leq C$ uniformly in $n$. From \eqref{eqn: u hat n di x_n/r_n non esplode} and the fact $\{x_n/r_n\}_n\subset B_L(0)\subset B_R(0)$,
we also infer $\hat{u}_{n,R}\geq -C$. Hence, up to subsequences, as already done in step \textbf{ii)}, we obtain once again locally smooth convergence of
$\hat{u}_n$ to the limit function of alternative b) of Thm \ref{thm: bubbling analysis}. That completes the proof.
\end{proof}

\begin{remark}
If we couple equations \eqref{eqn: concentrazione energia migliorata} and \eqref{eqn: volume bound}, we infer that our sequence $(u_n)_n$ can blow up at at most $I=8$ points, regardless of the number of points of maximum which $f_0$ possesses. Therefore, if the function $f_0$ has more than 8 non-degenerate points of maximum, in principle one could expect that for all $0<\lambda<<1$ the functional $E_\lambda$ admits at least three different critical points.
\end{remark}

\section{Appendix}
\paragraph{\textbf{A}}
We are going to prove respectively equations (\ref{eqn: M1}), (\ref{eqn: M3}) and (\ref{eqn: M2}). Recalling (\ref{eqn: laplaciano euclideo di w lambda}), we can expand $M_1=I+II+III$ where
\[
I:= \int_{\lambda\leq |x|\leq \sqrt{\lambda}}
 |x|^{-4}\left[\delta^{-1}\xi''\left(\delta^{-1}\log\left(\frac{1}{|x|}\right)  \right) -
2\xi'\left(\delta^{-1}\log\left(\frac{1}{|x|}\right) \right)\right]^2 dx\,,
\]
\[
II:=\int_{\sqrt{\lambda}\leq |x|\leq \frac12 } 4|x|^{-4} dx\,,
\]
\[
\begin{split}
III:= & \int_{\frac12\leq |x|\leq 1} 
\left\{ 
-\tau'\left(|x| \right)|x|^{-1}\left[ 
2 +5\log\left(\frac{1}{|x|} \right) \right] \right. + \\
&\\
&-2\tau\left(|x|\right)|x|^{-2}
+ \log\left(\frac{1}{|x|} \right) \tau''\left(|x|\right)\bigg\}^2 dx\,.
\end{split}
\]
Recalling equation (\ref{eqn: stime per chi 1}) and that $\delta=\frac12\log\left(1/\lambda\right)$,
and using the abbreviations $\xi''(\cdot):=\xi''\left(\delta^{-1}\log\left(\frac{1}{|x|}\right)  \right)$ and $\xi'(\cdot):=\xi'\left(\delta^{-1}\log\left(\frac{1}{|x|}\right)  \right)$, we obtain
\[
\begin{split}
I &=  4\int_{\lambda\leq |x|\leq \sqrt{\lambda}} |x|^{-4}
\left[\frac{(\xi''(\cdot))^2}{\log^2\left(1/\lambda\right)} + (\xi'(\cdot))^2 -
\frac{2\xi''(\cdot)\xi'(\cdot)}{\log\left(1/\lambda\right)} \right]dx \\
&\leq 4 \left[\frac{||\xi''||_\infty^2}{\log^2\left(1/\lambda\right)} + A_0^2+
\frac{2A_0||\xi''||_{\infty}}{\log\left(1/\lambda\right)} \right]
\int_{\lambda\leq |x|\leq \sqrt{\lambda}} |x|^{-4}dx\\
&= 4\pi^2 \left[\frac{||\xi''||_{\infty}^2}{\log\left(1/\lambda\right)} + 2A_0||\xi''||_{\infty}+
A_0^2\log\left(1/\lambda\right) \right] 
\end{split}
\]
and
\[
II=-8\pi^2\log2+4\pi^2\log\left(1/\lambda\right)\,.
\]
For the last term we have
\[
\begin{split}
III &\leq  3\int_{\frac12\leq |x|\leq 1} 
(\tau'(|x|))^2|x|^{-2}\left[ 
2 +5\log\left(\frac{1}{|x|} \right) \right]^2  + \\
&\\
&+4\tau^2(|x|)|x|^{-4}
+ \log^2\left(\frac{1}{|x|} \right) (\tau''(|x|))^2 dx\,.
\end{split}
\]
Since the functions $r\mapsto \log\left(\frac{1}{r}\right)$ and $r\mapsto\log^2\left(\frac{1}{r}\right)$ are monotone decreasing and positive on the interval $(\frac{1}{2},1)$, we obtain
\[
III\lesssim \int_{\frac12\leq |x|\leq 1}
\left(|x|^{-2} +|x|^{-4}\right)dx +C \leq C
\]
uniformly in $\lambda$. Hence, choosing a smaller $\lambda^{\varepsilon}$ if necessary, we infer that for all $0<\lambda<\lambda^{\varepsilon}$ 
\[
M_1\leq 4\pi^2(A_0^2+1)\log\left(1/\lambda\right) + C_0\,,
\]
where $C_0$ depends at most quadratically on the supremum norm of $\xi''$ but it does not depend on $\lambda$.

In order to obtain equation (\ref{eqn: M3}), we note that, in view of (\ref{eqn: gradiente euclideo di w lambda}), the supremum norm of the radial derivative $z_\lambda'$ on the ball $B_1(0) $ is of the order $O(\lambda^{-1}) $ as $\lambda \downarrow 0$. Therefore, it holds
\[
||z_\lambda' O''(r^{N-1})||_{\infty}=O(\lambda^{\frac{N-3}{2}})\,,
\] 
which leads to
\[
M_3=O(\lambda^{N-3})\,.
\]
Finally, combining the estimates about $M_1$ and $M_3$ and by H\"older's inequality, we obtain
\[
\begin{split}
|M_2| &\leq 2\int_{B_1(0)}\left| 
\Delta_{\R^4}z_\lambda \, z_\lambda' O''(r^{N-1})\right|dx\\
&\leq 2||\Delta_{\R^4}z_\lambda ||_{L^1(B_1(0))} \,||z_\lambda' O''(r^{N-1})||_{\infty}=O(\lambda^{\frac{N-3}{2}})
\end{split} 
\]
as $\lambda\downarrow 0$. 

\paragraph{\textbf{B}} We are going to prove \eqref{eqn: precompattezza 2}. Starting from the last two lines
of \eqref{eqn: precompattezza 1}, we obtain respectively:
\[
\begin{split}
\int_M |v_n\Delta_{g_0}v_n|\,|\nabla_{g_0}\eta|^2_{g_0}\,dV_{g_0} &\leq
||\nabla_{g_0}\eta||^2_\infty ||\Delta_{g_0}v_n ||_{L^t(M)}||v_n ||_{L^{t'}(M)}\\
&\leq C(\eta,d), \;\;\; t\in(1,2),\; t'=\frac{t}{t-1}\,;
\end{split}
\]
\[
\int_M \left(v_n\Delta_{g_0}\eta\right)^2 \,dV_{g_0}\leq ||\Delta_{g_0}\eta||^2_\infty
||v_n||^2_{L^2(M)}\leq C(\eta,d)\,;
\]
\[
\int_M \left|v_n^2 D\left(\nabla_{g_0}\eta,\nabla_{g_0}\eta\right)\right|dV_{g_0}\leq
c(M,g_0)||\nabla_{g_0}\eta||^2_\infty ||v_n||^2_{L^2(M)}\leq C(\eta,d)\,;
\]
\[
\begin{split}
\int_M \left|v_n\Delta_{g_0}\eta\, g_0\left(\nabla_{g_0}v_n,\nabla_{g_0}\eta\right)\right| dV_{g_0} &\leq
||\Delta_{g_0}\eta||_\infty||\nabla_{g_0}\eta||_\infty ||\nabla_{g_0}v_n ||_{L^t(M)}||v_n ||_{L^{t'}(M)}\\
&\leq C(\eta,d), \;\;\; t\in(1,4),\; t'=\frac{t}{t-1}\,;
\end{split}
\]
\[
\int_M g_0\left(\nabla_{g_0}v_n,\nabla_{g_0}\eta\right)^2  dV_{g_0}\leq ||\nabla_{g_0}\eta||_\infty^2
||\nabla_{g_0}v_n ||^2_{L^2(M)}\leq C(\eta,d)\,;
\]
the claim follows.
\paragraph{\textbf{C}}
We are going to prove the following:
\begin{proposition}\label{prop: Palais-Smale}
Let $(M,g_0)$ be closed and connected with $k_P<0$, $P_{g_0}\geq 0$ and $\ker(P_{g_0})=\left\{constants\right\}.$ Let $0\neq f\in C^2(M)$. Then the functional \eqref{eqn: the energy} satisfies the Palais-Smale condition at any level $\beta\in\R$.
\end{proposition}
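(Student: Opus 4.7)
The plan is to verify the Palais--Smale condition by first establishing an a priori $H^2$-bound on any PS sequence and then upgrading weak compactness to strong compactness via Adams' inequality. Let $(u_n)_n \subset \Htwo$ satisfy $E_f(u_n)\to\beta$ and $DE_f(u_n)\to 0$. The opening move will be to test $DE_f(u_n)$ against the constant function $\varphi\equiv 1$: since $\vol(M,g_0)=1$, this immediately yields
\[
\int_M f e^{4u_n}\,dV_{g_0} = Q_{g_0} + o(1).
\]
If $f\ge 0$ this is asymptotically incompatible with $Q_{g_0}<0$, so no PS sequence exists and the assertion is vacuous. Otherwise $f^-:=\max(-f,0)\not\equiv 0$, and writing $\int f^-e^{4u_n}\,dV_{g_0}=\int f^+ e^{4u_n}\,dV_{g_0} - Q_{g_0} + o(1)$ will show that $\int f^- e^{4u_n}\,dV_{g_0}$ stays uniformly bounded and bounded below by $|Q_{g_0}|/2$.

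Next I would mimic the coercivity argument of Lemma \ref{lemma: E_f is coercive}. Setting $d\mu_n := f^-\,dV_{g_0}/\|f^-\|_{L^1(M)}$, a fixed probability measure, Jensen's inequality will give
\[
\exp\bigl(4\,\bar{u}_n^{f^-}\bigr) \le \int_M e^{4u_n}\,d\mu_n \le C,
\]
with $\bar{u}_n^{f^-}:=\int_M u_n\,d\mu_n$, whence $\bar{u}_n^{f^-}\le C$. A weighted Poincaré inequality, identical in spirit to the one used in Lemma \ref{lemma: E_f is coercive}, combined with \eqref{eqn: poincare per il laplaciano}, will produce
\[
|\bar{u}_n - \bar{u}_n^{f^-}| \le C\,\|\nabla_{g_0} u_n\|_{L^2(M)} \le C\,\sqrt{\langle P_{g_0}u_n,u_n\rangle}.
\]
Coupling this with the energy identity $\langle P_{g_0}u_n,u_n\rangle + 4Q_{g_0}\bar{u}_n = \beta - Q_{g_0} + o(1)$, derived from $E_f(u_n)\to\beta$ and the integral constraint above, and exploiting $Q_{g_0}<0$, the quantity $X:=\langle P_{g_0}u_n,u_n\rangle$ will satisfy an inequality of the form $X \le C_1 + C_2\sqrt{X}$, which forces $X$ to be bounded. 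The non-negativity $X\ge 0$ then also pins $\bar{u}_n$ from below, so $\|u_n\|_{\Htwo}$ is uniformly bounded through \eqref{eqn: prodotto scalare equivalente}.

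Having secured a bounded PS sequence, the upgrade to strong convergence is routine: up to a subsequence, $u_n\rightharpoonup u^*$ weakly in $\Htwo$, strongly in $L^q(M)$ for every $q\in[1,\infty)$, and a.e.\ on $M$. By Adams' inequality and Vitali's convergence theorem, $fe^{4u_n}\to fe^{4u^*}$ in every $L^p(M)$. Testing the asymptotic identity $DE_f(u_n)[u_n-u^*]\to 0$ and absorbing the lower-order terms via these strong convergences will yield $\langle P_{g_0}u_n,u_n-u^*\rangle\to 0$; combined with the weak convergence of $u_n$ in $\Htwo$, this gives $\langle P_{g_0}(u_n-u^*),u_n-u^*\rangle\to 0$, and hence strong $H^2$-convergence via \eqref{eqn: poincare per il laplaciano}. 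Therefore $u^*\in\Htwo$ is a critical point of $E_f$ at level $\beta$.

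The main obstacle is the uniform $H^2$-bound when $f$ changes sign: $E_f$ is no longer coercive, so Lemma \ref{lemma: E_f is coercive} does not directly apply. The pivotal observation is that every PS sequence automatically satisfies the asymptotic integral constraint $\int_M f e^{4u_n}\,dV_{g_0}\to Q_{g_0}$ extracted from the test against $\varphi\equiv 1$; this restricts the $L^1$-type behavior of $(e^{4u_n})_n$ against the weight $f^-$ sufficiently for the Jensen/weighted-Poincaré argument above to close the estimate.
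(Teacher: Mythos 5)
Your overall skeleton is appealing (test $DE_f$ against constants to pin down $\int_M f e^{4u_n}\,dV_{g_0}$, weighted Jensen to control an average of $u_n$, weighted Poincar\'e plus the energy identity to bound $\langle P_{g_0}u_n,u_n\rangle$, and Adams/Vitali for the compactness upgrade), and the last part of the argument is indeed routine once the $H^2$-bound is in hand. The gap is at the very first nontrivial step. From $\int_M f e^{4u_n}\,dV_{g_0}\to Q_{g_0}$ you only learn
\[
\int_M f^+e^{4u_n}\,dV_{g_0}-\int_M f^-e^{4u_n}\,dV_{g_0}=Q_{g_0}+o(1),
\]
which controls the \emph{difference} and yields a lower bound on $\int_M f^-e^{4u_n}\,dV_{g_0}$, but it says nothing about either term separately: both may diverge together. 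The claimed upper bound $\int_M f^-e^{4u_n}\,dV_{g_0}\le C$ therefore does not follow, and without it Jensen's inequality against $d\mu=f^-\,dV_{g_0}/\|f^-\|_{L^1}$ gives no upper bound on $\bar u_n^{f^-}$. The subsequent estimate $X\le C_1+C_2\sqrt{X}$ for $X=\langle P_{g_0}u_n,u_n\rangle$ then cannot be closed, since the term $-4Q_{g_0}\bar u_n^{f^-}=4|Q_{g_0}|\bar u_n^{f^-}$ may be unbounded. Trying to obtain the missing bound from the energy alone is circular: $\int_M f^+e^{4u_n}\,dV_{g_0}\le C$ would follow from $\|u_n\|_{\Htwo}\le C$ via Adams, which is precisely what you are trying to prove. (You also have a sign slip: the energy identity should read $\langle P_{g_0}u_n,u_n\rangle+4Q_{g_0}\bar u_n=\beta+Q_{g_0}+o(1)$, not $\beta-Q_{g_0}+o(1)$.)

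The paper closes this hole by a contradiction argument that does not attempt to control $\int_M f^-e^{4u_n}\,dV_{g_0}$ at all. Assuming $\int_M u_n\,dV_{g_0}\to\infty$, so that $\|u_n\|_{L^2}\to\infty$, one normalizes $v_n:=u_n/\|u_n\|_{L^2}$ and uses the energy identity to deduce $\langle P_{g_0}v_n,v_n\rangle\to 0$, hence (Poincar\'e and $\|v_n\|_{L^2}=1$) $v_n\to 1$ in $L^2$. One then tests $DE_f(u_n)$ against the vanishing \emph{nonconstant} direction $\phi_n:=f/\|u_n\|_{L^2}\in\Htwo$ (here $f\in C^2\subset\Htwo$ is essential), which gives $\int_M f^2e^{4u_n}/\|u_n\|_{L^2}\,dV_{g_0}\to 0$; but the elementary inequality $e^{4u_n}\ge 4u_n$ forces this integral to be at least $4\int_M f^2v_n\,dV_{g_0}\to 4\int_M f^2\,dV_{g_0}>0$, a contradiction. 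Once $\sup_n\int_M u_n\,dV_{g_0}<\infty$ is known, the energy identity together with $Q_{g_0}<0$ bounds $\int_M u_n\,dV_{g_0}$ from below and then $\langle P_{g_0}u_n,u_n\rangle$ from above, giving the $H^2$-bound. The key idea you are missing is to exploit the PS condition in a second, nonconstant test direction; testing only against constants is not enough when $f$ changes sign.
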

\begin{proof}
Let $(u_k)_k\subset \Htwo$ be a Palais-Smale sequence at the level $\beta$ for the functional $E_f$, viz. as $k\to\infty$
\[
E_f(u_k)= \langle P_{g_0}u_k,u_k \rangle +4Q_{g_0}\int_M u_k\, dV_{g_0} - \int_M fe^{4u_k}dV_{g_0}\to\beta
\]
and
\[
||DE_f(u_k)||\to 0\,.
\]
Therefore, since in particular $DE_f(u_k)[1]\to 0$, we obtain $\int_M fe^{4u_k}dV_{g_0}\to k_P$ and
\begin{equation}\label{eqn: Palais-Smale}
\langle P_{g_0}u_k,u_k \rangle +4Q_{g_0}\int_M u_k\, dV_{g_0}=\beta+k_P +o(1)  
\end{equation}
as $k\to\infty$.

\texttt{Claim:} $\sup_k\int_M u_k \, dV_{g_0}<\infty$.

We argue by contradiction and assume that there exists a subsequence still denoted $u_k$ such that
$\lim_k\int_M u_k \, dV_{g_0}=\infty$. Hence, by H\"older inequality it follows that also the quantity
$||u_k||_{L^2(M)}$ tend to $\infty$. We set
\[
v_k:=\frac{u_k}{||u_k||_{L^2(M)}}\,.
\]
Trivially, $||v_k||_{L^2(M)}=1$ and from \eqref{eqn: Palais-Smale} we obtain
\begin{eqnarray*}
 \langle P_{g_0}v_k,v_k \rangle &=& \langle P_{g_0}u_k,u_k \rangle \,\,||u_k||_{L^2(M)}^{-2}\\
 &=&  \left(-4Q_{g_0}\int_M u_k\, dV_{g_0}+\beta+k_P +o(1)\right)\,\,||u_k||_{L^2(M)}^{-2}\,.
\end{eqnarray*}
Because $\left|-4Q_{g_0}\int_M u_k\, dV_{g_0}\right|\,\,||u_k||_{L^2(M)}^{-2}\leq -4Q_{g_0}||u_k||_{L^2(M)}^{-1}$, it follows that the right hand side of the expression above tends to zero as $k\to\infty$ and consequently
\[
\langle P_{g_0}v_k,v_k \rangle\to 0\,.
\]
Therefore, up to subsequences, we can assume $v_k\rightharpoonup v$ in $\Htwo$ and $v_k\to v$ in $L^2(M)$. By means of Poincar\'e's inequality, we infer $v\equiv c\in\{-1,1\}$. On the other hand,
\[
\frac{\int_M u_k \, dV_{g_0}}{||u_k||_{L^2(M)}}=\int_M v_k \, dV_{g_0}\to c
\]
and, because by assumption $\lim_k\int_M u_k \, dV_{g_0}=\infty$, we deduce $v\equiv 1$. We define $\phi_k=\frac{f}{||u_k||_{L^2(M)}}\in\Htwo$. Obviously, $\phi_k\to 0$ in $\Htwo$ and so it follows
\[
\langle P_{g_0}u_k,\phi_k \rangle +2Q_{g_0}\int_M \phi_k\, dV_{g_0} - 2\int_M f\phi_ke^{4u_k}dV_{g_0}\to 0
\]
or, equivalently,
\[
\langle P_{g_0}v_k,f \rangle +2Q_{g_0}\int_M f\, dV_{g_0}\,\,||u_k||_{L^2(M)}^{-1} - 2\int_M 
\frac{f^2e^{4u_k}}{||u_k||_{L^2(M)}} dV_{g_0}\to 0
\]
From above, it thus follows $\int_M \frac{f^2e^{4u_k}}{||u_k||_{L^2(M)}} dV_{g_0}\to 0$. On the other hand, we have
\[
\int_M \frac{f^2e^{4u_k}}{||u_k||_{L^2(M)}} dV_{g_0} \geq \int_M \frac{4f^2u_k}{||u_k||_{L^2(M)}} dV_{g_0} =\int_M 4f^2v_k dV_{g_0}\to \int_M 4f^2 dV_{g_0} >0
\]
since $v_k\to 1$ in $L^2(M)$. The contradiction proves the claim.

Equation \eqref{eqn: Palais-Smale} also implies $4Q_{g_0}\int_M u_k\, dV_{g_0}\leq\beta+k_P +o(1) $ and therefore $\inf_k\int_M u_k \, dV_{g_0}>-\infty$ and
\[
\sup_k\left|\int_M u_k \, dV_{g_0}\right|<\infty\,.
\]
Again by \eqref{eqn: Palais-Smale} it follows $\sup_k\langle P_{g_0}u_k,u_k \rangle<\infty$ and by Poincar\'e's inequality we conclude that $(u_k)_k$ is bounded in $\Htwo$. 

From this fact, it is now standard to extract from $(u_k)_k$ a converging subsequence in $\Htwo$. That concludes the proof.
\end{proof}

\section*{Acknowledgments}
I would like to thank Michael Struwe for the useful suggestions aimed to improve the presentation of this paper.

\end{document}